\documentclass[11pt,a4paper]{amsart}
\usepackage{amssymb}
\usepackage{amsmath}
\usepackage{amsfonts}
\usepackage[english]{babel}
\usepackage[T1]{fontenc}
\usepackage[latin1]{inputenc}
\usepackage{amsthm}
\usepackage[all]{xy}
\usepackage{comment}
\usepackage{tikz} 
\usepackage{color}
\usepackage{mathrsfs}

\newtheorem{theorem}{Theorem}[section]
\newtheorem{lemma}[theorem]{Lemma}

\newtheorem{proposition}[theorem]{Proposition}

\theoremstyle{definition}

\newtheorem{example}[theorem]{Example}

\newtheorem{prop}[theorem]{Proposition}
\theoremstyle{remark}
\newtheorem{remark}[theorem]{Remark}
\newtheorem{remarks}[theorem]{Remarks}
\numberwithin{equation}{section}

\theoremstyle{plain}



\title{Structure of closed subideals of $\mathcal L(X)$}

\begin{document}

\author[Hans-Olav Tylli and Henrik Wirzenius]{Hans-Olav Tylli and Henrik Wirzenius}

\thanks{H. Wirzenius is supported by the project L100192451 of the Czech Academy of Sciences.}

\address{Tylli: Department of Mathematics and Statistics, Box 68,
FI-00014 University of Helsinki, Finland}
\email{hans-olav.tylli@helsinki.fi}
\address{Wirzenius: Institute of Mathematics, Czech Academy of Sciences, \v{Z}itn\'a 25, 115 67 Praha 1, Czech Republic}
\address{Faculty of Information Technology and Communications Sciences, Tampere University,
P.O. Box 692, FI-33101, Tampere, Finland.}
\email{wirzenius@math.cas.cz}
\subjclass[2020]{46H10, 46B28, 47L10}

\begin{abstract}
The closed subalgebra~$\mathcal J$ of the Banach algebra~$\mathcal L(X)$ of bounded linear operators on the Banach space~$X$ is a non-trivial closed 
~$\mathcal I$-subideal of~$\mathcal L(X)$ if~$\mathcal I$ is a closed ideal of~$\mathcal L(X)$ and~$\mathcal J$ is an ideal of~$\mathcal I$, but~$\mathcal J$ is not an ideal of~$\mathcal L(X)$.
We obtain a variety of examples of non-trivial closed subideals of~$\mathcal L(X)$ for different spaces~$X$,  which highlight further significant differences compared to the 
class of closed ideals.  We study the concept of a closed $n$-subideal of~$\mathcal L(X)$  for~$n \ge 3$, which is a natural generalization of that of a closed subideal.
In particular, we find explicit spaces~$X$ for which~$\mathcal L(X)$ contains a decreasing sequence~$(\mathcal M_n)_{n\in \mathbb N}$ of closed subalgebras, where for all $n\in\mathbb N$ the subalgebra $\mathcal M_n$ is an~$(n+1)$-subideal of $\mathcal L(X)$ but not an $n$-subideal. Moreover, we construct closed $n$-subideals contained in the compact operators $\mathcal K(X)$ for certain Banach spaces $X$ which fail the approximation property.
\end{abstract}

\maketitle

\section{Introduction}\label{intro}

Let $\mathcal A$ be a Banach algebra, and suppose that  
\[
\mathcal J \subset \mathcal I \subset \mathcal A
\]
are closed subalgebras. 
The subalgebra $\mathcal J$ is said to be a closed 
$\mathcal I$-\textit{subideal} of $\mathcal A$ if $\mathcal I$ is an ideal of $\mathcal A$ and $\mathcal J$ is an ideal of $\mathcal I$. The closed
$\mathcal I$-subideal~$\mathcal J$ is  \textit{non-trivial} if $\mathcal J$ is not an ideal of $\mathcal A$. 
For brevity we say that $\mathcal J$ is a non-trivial closed subideal of $\mathcal A$ if  $\mathcal J$ is a non-trivial closed  $\mathcal I$-subideal of $\mathcal A$  
for some closed ideal  $\mathcal I  \varsubsetneq  \mathcal A$.

Let $\theta:  \mathcal A \to  \mathcal B$ be a continuous  algebra isomorphism, where $\mathcal A$ and  $\mathcal B$ are Banach algebras. Then 
$\theta$ maps non-trivial closed subideals of $\mathcal A$ to non-trivial closed subideals of $\mathcal B$ (cf. Lemma \ref{qsubid} below), so the 
collection of closed subideals contains finer  information about the structure of the algebras compared with the closed ideals. 

Our main concern is the class of  Banach algebras $\mathcal L(X)$ consisting of the bounded operators on the (real or complex) Banach space $X$.
The first examples of non-trivial (and non-closed) subideals of $\mathcal L(\ell^2)$ were obtained by Fong and Radjavi  \cite{FR83}, though 
 the term subideal was coined later by Patnaik and Weiss \cite{PW13}. 
On the other hand,   $\mathcal L(X)$ fails to have non-trivial closed subideals if  $X$ is a Hilbert space, or if $X$ is one of  the spaces in
\begin{equation}\label{spaces}
\{\ell^p:  1 \le p < \infty\} \cup \{c_0\}.
\end{equation}
This is seen from the fact that the compact operators $\mathcal K(X)$ is the only proper closed ideal of $\mathcal L(X)$ for  $X$ in  \eqref{spaces},
see e.g. \cite[5.1-5.2]{Pi80}, 
and from \cite[Remark 3.2.(i)]{TW23} for arbitrary Hilbert spaces $X$.
At the other extreme, it was shown in \cite[Section 3]{TW23} that $\mathcal L(X)$  admits
large families of non-trivial  closed subideals  for many classical Banach spaces $X$, including $L^p(0,1)$ with $p \neq 2$.
 
In  Section \ref{Tarbard} we obtain examples of various intermediary phenomena for non-trivial closed subideals of $\mathcal L(X)$ by considering  
classes of spaces $X$, where either $\mathcal L(X)$ contains relatively few closed ideals or closed ideals of a specific type.
In particular, we discuss  closed subideals for  
the class of hereditarily indecomposable spaces $X_k$ constructed by Tarbard  \cite{Ta12a}, \cite{Ta12},
as well as the spaces $X$ obtained by Motakis in \cite{M23}, where the  Calkin algebra $\mathcal L(X)/\mathcal K(X)$ is isomorphic as a Banach algebra to 
$C(K)$ for compact metric spaces $K$. 

In \cite[Theorem 4.5]{TW22} we found a Banach space  $Z_p$ together with a family
\[
\mathfrak F = \{\mathcal I_B: \emptyset \neq B \varsubsetneq \mathbb N\}
\]
 the size of the continuum, which  consists of non-trivial closed $\mathcal K(Z_p)$-subideals of  $\mathcal L(Z_p)$.
The family $\mathfrak F$  has the rather unexpected property \cite[Theorem 2.1]{TW23} 
that $\mathcal I_{B_{1}}$ and $\mathcal I_{B_{2}}$ are isomorphic Banach algebras for any non-empty subsets $\emptyset \neq B_1, B_2 \varsubsetneq  \mathbb N$.
In Section \ref{newsubs}
we revisit this setting and construct another large family $\mathfrak F_+$ of non-trivial closed $\mathcal K(Z_p)$-subideals of  $\mathcal L(Z_p)$ such that $\mathfrak F \cap \mathfrak F_+ = \emptyset$.
The new subideals from $\mathfrak F_+$  are again isomorphic as Banach algebras to the subideals  $\mathcal I_B$ from $\mathfrak F$.

Shulman and Turovskii \cite{ST14} introduced the natural graded concept of an algebraic $n$-subideal for Banach algebras.
In Section \ref{nsubs}, which is the most significant part of this paper, we turn our attention to the closed $n$-subideals of $\mathcal L(X)$ for $n \ge 2$. Here the closed $2$-subideals correspond to the  subideals studied in  Sections 
 \ref{Tarbard}  and \ref{newsubs}. 
We construct  a variety of non-trivial closed $n$-subideals. Theorem \ref{nsubidchain} demonstrates that for certain direct sums such as $X = \big(\bigoplus_{j=1}^\infty \ell^{p_j}\big)_{c_0}$
the algebra $\mathcal L(X)$  contains an infinite decreasing sequence $(\mathcal M_n)$ of closed subalgebras, where for all $n\in\mathbb N$ the subalgebra $\mathcal M_n$ is an $(n+1)$-subideal
of $\mathcal L(X)$  but not an $n$-subideal.
Moreover, Theorem \ref{nsubid}  produces  for each $n\in\mathbb N$ a closed subspace $Z_n \subset c_0$ that fails to have the approximation property, for which $\mathcal L(Z_n)$ has a closed $(n+1)$-subideal 
$\mathcal M_{n} \subset \mathcal K(Z_n)$ that is not an $n$-subideal of $\mathcal L(Z_n)$.

The identification or even classification of the closed ideals of $\mathcal L(X)$ for various spaces $X$ is a topic of longstanding interest, see e.g. \cite[Chapter 5]{Pi80}. Recently there has been 
significant  advances, see e.g. \cite{SZ18}, \cite{JPS20}, \cite{JS21}, \cite{FSZ21} and their references. Closed $n$-subideals of $\mathcal L(X)$ for $n \ge 2$  concern a weaker ideal structure,
which exhibits many surprising properties.

We refer to \cite{LT77} and \cite{DJT95} for undefined concepts related to  Banach spaces and operator theory, and similarly to \cite{D00} and \cite{P94} for  Banach algebra theory. 
Throughout the paper $X \approx Y$ denotes linearly isomorphic Banach spaces $X$ and $Y$, and $\mathcal A \cong \mathcal B$ denotes (multiplicatively) isomorphic Banach algebras 
$\mathcal A$ and $\mathcal B$.

\section{Closed subideals associated to  H.I. type spaces}\label{Tarbard}

In this section we discuss  examples which highlight various differences between closed ideals and  non-trivial closed subideals of the algebras~$\mathcal L(X)$.
For instance, there are Banach spaces $X$ such that~$\mathcal L(X)$ has only finitely many closed ideals, but contains a continuum of non-trivial closed subideals (Theorem \ref{tarbard}). On the other hand, there are spaces $X$ 
for which~$\mathcal L(X)$ has a large class of closed ideals, but fails to contain any non-trivial closed subideals (Example \ref{mot}). 

These examples involve a precise understanding of 
the closed ideals of $\mathcal L(X)$, and some of the relevant spaces  will be drawn from the class of hereditarily indecomposable spaces. Recall that the Banach space $X$ is \textit{hereditarily indecomposable} (H.I.) 
if no closed infinite-dimensional subspace $M \subset X$ can be decomposed as a direct sum $M = M_1 \oplus M_2$, where $M_j \subset M$ are closed infinite-dimensional subspaces for $j = 1, 2$.
The first examples of H.I. spaces were constructed by Gowers and Maurey \cite{GM93}, and this class appears in many connections, see e.g. the survey \cite{Ma03}.

We state the following relevant  fact for explicit reference.

\begin{remark}\label{nonzero}
Let $X$ be an arbitrary Banach space and denote by $\mathcal F(X)$  the class of bounded finite-rank operators $X \to X$. 
If $\mathcal J$ is any non-zero closed subideal  of $\mathcal L(X)$, then 
$\mathcal A(X) \subset \mathcal J$, where $\mathcal A(X)  = \overline{\mathcal F(X)}$ is the closed ideal of
the approximable operators $X \to X$,  see e.g. \cite[Theorem 2.5.8.(ii)]{D00} or \cite[(2.1)]{TW23}.
In particular, if $X$ has the approximation property (A.P.), then
$\mathcal K(X) = \mathcal A(X)  \subset \mathcal J$ for any such $\mathcal J \neq \{0\}$.
\end{remark}

Argyros and Haydon \cite{AH11} constructed a real H.I. Banach space $X_{AH}$ having a  Schauder basis, such that  
\[
\mathcal L(X_{AH}) = \{\lambda I_{X_{AH}} + T: \lambda \in \mathbb R,\ T \in \mathcal K(X_{AH})\}.
\] 
The algebra $\mathcal L(X_{AH})$ does not have any non-trivial closed subideals, because $X_{AH}$ has the A.P. 
so that $\mathcal K(X_{AH})$ is the only non-trivial  closed ideal of $\mathcal L(X_{AH})$. 
Subsequently Tarbard \cite{Ta12a}, \cite{Ta12} 
obtained  H.I.  spaces $X_k$ for $k \ge 2$,  such that $\mathcal L(X_k)$ has relatively few operators and the 
closed ideals of $\mathcal L(X_k)$ are explicitly  classified (see \eqref{idchain} below). 

Let $X$ and $Y$ be Banach spaces. Recall that $T \in \mathcal L(X,Y)$ is a \textit{strictly singular} operator, denoted $T \in \mathcal S(X,Y)$, if the restriction of $T$ does not define a
linear isomorphism $M \to TM$ for any closed infinite-dimensional subspace $M \subset X$. The class $\mathcal S$ is a closed Banach operator ideal in the sense of 
Pietsch \cite{Pi80}, so that 
$\mathcal S(X)$ is a closed ideal of $\mathcal L(X)$  for any $X$, see e.g. \cite[Proposition 2.c.5]{LT77} or \cite[Section 1.9]{Pi80}.
Moreover, $\mathcal K(X,Y) \subset \mathcal S(X,Y)$ for any $X$ and $Y$.

It is a natural question whether the algebras $\mathcal L(X_k)$ admit any  non-trivial closed subideals. 
The answer  turns out to depend on $k$. 
Recall from  \cite{Ta12} that the Tarbard space $X_k$ is a separable real $\mathcal L^\infty$-space which has  a Schauder basis,  the dual space 
$X_k^* \approx \ell^1$, and that the following properties are satisfied for $k \ge 2$: 

(i) There is a strictly singular operator $S \in \mathcal S(X_k) \setminus \mathcal K(X_k)$ such that $S^{k-1} \notin \mathcal K(X_k)$ and $S^k = 0$, and 
every $U \in \mathcal L(X_k)$ has the unique representation
\begin{equation}\label{bded}
U = \alpha I_{X_{k}} + \sum_{j=1}^{k-1} \alpha_j S^j + T,
\end{equation}
where $\alpha, \alpha_1,\ldots, \alpha_{k-1} \in \mathbb R$ and $T \in \mathcal K(X_k)$. In particular, 
\[
\big(I_{X_{K}} + \mathcal K(X_k), S + \mathcal K(X_k), \ldots, S^{k-1} + \mathcal K(X_k)\big)
\]
 is a linear basis of the $k$-dimensional Calkin algebra $\mathcal L(X_k)/\mathcal K(X_k)$.

(ii) The non-zero closed ideals of $\mathcal L(X_k)$ form the chain 
\begin{equation}\label{idchain}
\mathcal K(X_k) \varsubsetneq [S^{k-1}] \varsubsetneq [S^{k-2}] \varsubsetneq \ldots  \varsubsetneq [S^{2}]
\varsubsetneq  [S]  = \mathcal S(X_k) \varsubsetneq \mathcal L(X_k).
\end{equation}
Here $[S^j]$ denotes the closed ideal of $\mathcal L(X_k)$ generated by the operator $S^j$ for $j = 1, \ldots, k-1$. Property   \eqref{bded} implies 
that
\begin{equation}\label{sj}
[S^j] = span(S^j,\ldots,S^{k-1}) + \mathcal K(X_k). 
\end{equation}
Hence  the quotients $\mathcal L(X_k)/ [S]$, $[S^{k-1}]/ \mathcal K(X_k)$, as well as $[S^{j}]/ [S^{j+1}]$ for $j = 1,\ldots, k-2$,
are $1$-dimensional.

We first point out  that in the above  setting
any closed $\mathcal S(X_k)$-subideal $\mathcal J$ is actually an ideal of $\mathcal L(X_k)$.

\begin{lemma}\label{codim}
Suppose that $X$ is a Banach space, and assume that the closed ideal $\mathcal I \subset \mathcal L(X)$ has codimension $1$ in $\mathcal L(X)$. 
Then every closed $\mathcal I$-subideal $\mathcal J$ is an ideal of $ \mathcal L(X)$.
\end{lemma}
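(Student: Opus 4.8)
The plan is to exploit the fact that a codimension-one closed ideal is complemented in $\mathcal L(X)$ by the scalar multiples of the identity. First I would observe that $\mathcal I$ is a proper ideal of $\mathcal L(X)$ (the quotient $\mathcal L(X)/\mathcal I$ being one-dimensional), so $I_X \notin \mathcal I$; otherwise $\mathcal I$ would absorb every operator and equal $\mathcal L(X)$. Combined with $\dim \mathcal L(X)/\mathcal I = 1$, this yields the direct sum decomposition
\[
\mathcal L(X) = \mathcal I \oplus \mathbb F\, I_X
\]
of linear subspaces, where $\mathbb F \in \{\mathbb R, \mathbb C\}$ is the scalar field.

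Next, let $A \in \mathcal J$ and $T \in \mathcal L(X)$ be arbitrary. Using the decomposition above, write $T = S + \lambda I_X$ with $S \in \mathcal I$ and $\lambda \in \mathbb F$. Then
\[
TA = SA + \lambda A \qquad \text{and} \qquad AT = AS + \lambda A.
\]
Since $\mathcal J \subset \mathcal I$ we have $A \in \mathcal I$, and because $\mathcal J$ is an ideal of $\mathcal I$ and $S \in \mathcal I$, both $SA$ and $AS$ belong to $\mathcal J$. As $\lambda A \in \mathcal J$ trivially, it follows that $TA, AT \in \mathcal J$. Hence $\mathcal J$ is an ideal of $\mathcal L(X)$.

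There is really no serious obstacle here: once one notices that $I_X \notin \mathcal I$, the codimension-one hypothesis forces $\mathcal L(X) = \mathcal I + \mathbb F I_X$, and the rest is the one-line computation above. The only point requiring a word of care is that $\mathcal J$ is assumed to be an \emph{ideal} of $\mathcal I$ (a subideal, not merely a subalgebra), which is exactly what makes the products $SA$ and $AS$ land back in $\mathcal J$.
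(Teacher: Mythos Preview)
Your proof is correct and follows essentially the same approach as the paper: both arguments note that $I_X \notin \mathcal I$, decompose an arbitrary $U \in \mathcal L(X)$ as $\alpha I_X + T$ with $T \in \mathcal I$, and then observe that $UV = \alpha V + TV \in \mathcal J$ (and similarly for $VU$) since $\mathcal J$ is an ideal of $\mathcal I$.
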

\begin{proof}
Suppose  that  $V \in \mathcal J$ and $U = \alpha I_X + T \in \mathcal L(X)$ are arbitrary, where $T \in \mathcal I$. (Recall  that 
$I_X \notin \mathcal I$.)  It follows that
$VU =  \alpha V + VT \in \mathcal J$ and  $UV =  \alpha V + TV \in \mathcal J$, that is, $\mathcal J$  is an ideal of  $\mathcal L(X)$.
\end{proof}
For $n \ge 2$ there are spaces $X$ and closed ideals $\mathcal I \subset \mathcal L(X)$ having codimension $n$, such that $\mathcal L(X)$ 
contains non-trivial closed $\mathcal I$-subideals $\mathcal J$, see Remark \ref{ex1} below.

\begin{theorem}\label{tarbard}
(i)  The algebras $\mathcal L(X_2)$ and  $\mathcal L(X_3)$ do not have any non-trivial closed subideals.

(ii) Let $k \ge 4$ and fix $j \in \mathbb N$ such that $k/2 \le j \le k-2$. 
Let $S \in \mathcal S(X_k)$ be the operator that satisfies \eqref{bded} -- \eqref{sj}. 
Then the family $\mathfrak F_j$ consisting of the non-trivial closed $[S^j]$-subideals of $\mathcal L(X_k)$ equals  the class of closed subalgebras
\begin{equation}\label{full}
\mathcal I_N := N + \mathcal K(X_k), 
\end{equation}
where $\{0\}\neq N \subset span(S^j,\ldots,S^{k-1})$ is any linear subspace that satisfies
 \[
 N \neq span(S^r,\ldots,S^{k-1}), \textrm{ for }\  r = j,\ldots,k-1.
 \]
Each family $\mathfrak F_j$ has the size of the continuum.
\end{theorem}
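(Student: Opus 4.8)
The plan is to establish part (ii) first and then derive part (i) by a short case analysis. Throughout write $N_0 := span(S^j,\ldots,S^{k-1})$, so that $[S^j] = N_0 + \mathcal K(X_k)$ by \eqref{sj} and $N_0 \cap \mathcal K(X_k) = \{0\}$ by the uniqueness in \eqref{bded}; hence $[S^j] = N_0 \oplus \mathcal K(X_k)$ as vector spaces, with $\dim N_0 = k-j$. The structural fact driving everything is that the hypothesis $k/2 \le j$ makes $[S^j]$ \emph{square-zero modulo} $\mathcal K(X_k)$: if $a,b \ge j$ then $a+b \ge 2j \ge k$, so $S^{a+b} = 0$ because $S^k = 0$, and therefore $N_0 \cdot N_0 = \{0\}$.

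For the inclusion of the family $\{\mathcal I_N\}$ into $\mathfrak F_j$, fix a nonzero subspace $N \subseteq N_0$ obeying the stated constraints. Then $\mathcal I_N = N + \mathcal K(X_k)$ is closed (a closed subspace plus a finite-dimensional one) and is a subalgebra, since $N\cdot N \subseteq N_0\cdot N_0 = \{0\}$ while $N\,\mathcal K(X_k)$, $\mathcal K(X_k)\,N$ and $\mathcal K(X_k)^2$ all lie in $\mathcal K(X_k)$; the same product computation, now letting one factor range over $N_0 + \mathcal K(X_k) = [S^j]$, shows $\mathcal I_N$ is a two-sided ideal of $[S^j]$, so $\mathcal I_N$ is a closed $[S^j]$-subideal. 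It is not an ideal of $\mathcal L(X_k)$: by \eqref{idchain} the only closed ideals of $\mathcal L(X_k)$ lying between $\mathcal K(X_k)$ and $[S^j]$ are $\mathcal K(X_k)$ and the ideals $[S^r] = span(S^r,\ldots,S^{k-1}) + \mathcal K(X_k)$ for $r = j,\ldots,k-1$; and from the decomposition $[S^j] = N_0 \oplus \mathcal K(X_k)$ one checks that $N + \mathcal K(X_k) = N' + \mathcal K(X_k)$ (for subspaces $N,N'\subseteq N_0$) forces $N = N'$, so $\mathcal I_N$ equals one of those ideals exactly when $N = \{0\}$ or $N = span(S^r,\ldots,S^{k-1})$ for some $r$ --- possibilities excluded by hypothesis.

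Conversely, let $\mathcal J$ be a non-trivial closed $[S^j]$-subideal. Then $\mathcal J \neq \{0\}$ (else $\mathcal J$ would be an ideal of $\mathcal L(X_k)$), and since $X_k$ has a Schauder basis, hence the A.P., Remark \ref{nonzero} yields $\mathcal K(X_k) \subseteq \mathcal J \subseteq [S^j] = N_0 \oplus \mathcal K(X_k)$. Setting $N := \mathcal J \cap N_0$ and using $\mathcal K(X_k) \subseteq \mathcal J$ together with the direct sum gives at once $\mathcal J = N + \mathcal K(X_k) = \mathcal I_N$ with $N$ a linear subspace of $N_0$; the subalgebra and ideal conditions impose nothing further by the first part. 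Non-triviality of $\mathcal J$ now rules out $N = \{0\}$ and $N = span(S^r,\ldots,S^{k-1})$, $r = j,\ldots,k-1$, so $N$ meets exactly the stated conditions, establishing the equality of the two families. For the cardinality, $\dim N_0 = k-j \ge 2$ since $j \le k-2$; the collection of linear subspaces of a real vector space of dimension $\ge 2$ has cardinality the continuum (it is at least $\mathfrak c$ because the lines through the origin already form a continuum, and at most $\mathfrak c$ since each subspace is spanned by finitely many vectors), we discard only the finitely many forbidden subspaces, and $N \mapsto \mathcal I_N$ is injective by the uniqueness noted above; hence $|\mathfrak F_j| = \mathfrak c$.

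Finally, for part (i) the closed ideals $\mathcal I$ with $\mathcal K(X_k) \subseteq \mathcal I \varsubsetneq \mathcal L(X_k)$ are, by \eqref{idchain}, $\mathcal K(X_k)$ and $\mathcal S(X_k) = [S]$ when $k = 2$, with $[S^2]$ in addition when $k = 3$. If $\mathcal I = \mathcal S(X_k)$, then $\mathcal I$ has codimension $1$ in $\mathcal L(X_k)$, so Lemma \ref{codim} applies and every closed $\mathcal I$-subideal is an ideal of $\mathcal L(X_k)$. In each remaining case $\mathcal I/\mathcal K(X_k)$ has dimension $\le 1$, so by Remark \ref{nonzero} any non-zero closed $\mathcal I$-subideal $\mathcal J$ satisfies $\mathcal K(X_k) \subseteq \mathcal J \subseteq \mathcal I$ and hence equals $\mathcal K(X_k)$ or $\mathcal I$, both ideals of $\mathcal L(X_k)$; as $\{0\}$ is also an ideal, $\mathcal L(X_2)$ and $\mathcal L(X_3)$ have no non-trivial closed subideals. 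The one genuinely delicate point is the bookkeeping in the converse half of (ii) --- pinning $\mathcal J$ down exactly as $N + \mathcal K(X_k)$ and matching the forbidden $N$'s with the trivial cases; the rest is the square-zero computation together with the known ideal chain \eqref{idchain}.
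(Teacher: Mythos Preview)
Your proof is correct and follows essentially the same approach as the paper: the key square-zero observation $N_0\cdot N_0=\{0\}$ from $2j\ge k$, the direct-sum decomposition $[S^j]=N_0\oplus\mathcal K(X_k)$, Remark~\ref{nonzero} to force $\mathcal K(X_k)\subseteq\mathcal J$, and Lemma~\ref{codim} plus the ideal chain \eqref{idchain} for part~(i). The only difference is organizational --- you do (ii) first and present the converse via $N=\mathcal J\cap N_0$ rather than simply asserting the form --- but the substance is the same.
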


\begin{proof}
(i) $\mathcal K(X_2)$ and $\mathcal S(X_2)$ are the only proper closed ideals $\mathcal I$ of $\mathcal L(X_2)$, and any non-zero closed
subideal $\mathcal J$ must satisfy  $\mathcal K(X_2) \subset \mathcal J \subset \mathcal S(X_2)$ in view of Remark \ref{nonzero}. Since $\mathcal S(X_2)/ \mathcal K(X_2)$
is $1$-dimensional, there are not even any  closed linear subspaces strictly between $\mathcal K(X_2)$ and $\mathcal S(X_2)$.

Analogously, if $\mathcal J$ is a non-trivial closed $\mathcal I$-subideal of $\mathcal L(X_3)$, then by \eqref{idchain} and  Lemma \ref{codim}
the intermediate closed ideal $\mathcal I = [S^2]$. However, once again there are no closed linear subspaces
$\mathcal K(X_3) \varsubsetneq \mathcal J \varsubsetneq [S^2]$,
since  the quotient  $[S^2]/ \mathcal K(X_3)$ is $1$-dimensional.

\smallskip

(ii) Note first that for any $k \ge 4$ it is possible to find $j \in \{2,\ldots, k-2\}$ such that $2j \ge k$. (In fact,  choose $j = 2$ for $k = 4$, $j = 3$ for $k = 5$,
and for $k \ge 6$  there are progressively more choices for $j$.)

We will require  the  cancellation property
\begin{equation}\label{prod}
UV \in \mathcal K(X_k) \textrm{ for } U \in  [S^j]  \textrm{ and } V \in [S^m], 
\end{equation}
whenever  $j$ and $m$ satisfy $j + m \ge k$. We will actually verify a more comprehensive statement, where the case $ j+m<k$ will  be relevant  in 
Theorem \ref{tarbard2}.

\smallskip

\textit{Claim}.  If $U\in [S^l]$ and $V\in [S^m]$, where $l, m  \in \{1, \ldots, k-1\}$, then
\begin{equation}\label{prod2}
UV \in [S^{l+m}] \  \text{ if } l+m<k, \quad UV \in  \mathcal K(X_k) \ \text{ if } l+m\ge k.
\end{equation}

In fact,  write $U  = \sum_{r=l}^{k-1} a_r S^r+ R_1$ and  $V  = \sum_{s=m}^{k-1} b_s S^s+ R_2$,
where  $R_1, R_2 \in \mathcal K(X_k)$.  If $l+m < k$  then we get  from $S^k = 0$ that 
\[
UV =  \sum_{u= 0}^{k-1-(l+m)} \Big(\sum_{t=0}^u a_{l+t}b_{m+u-t}\Big) S^{l+m+u} + R  \in  [S^{l+m}],
\]
as $R := \Big(\sum_{r=l}^{k-1} a_r S^r\Big)R_2 + R_1\Big(\sum_{s=m}^{k-1} b_s S^s\Big) + R_1R_2 \in \mathcal K(X_k)$.
Moreover, if $l+m \ge k$, then  $UV \in \mathcal K(X_k)$ since $(\sum_{r=l}^{k-1} a_r S^r)(\sum_{s=m}^{k-1} b_s S^s) = 0$
in view of the property that $S^k = 0$.

Next we make the following elementary observation: if $M$ and $N$ are distinct subspaces of $span(S^j,\ldots,S^{k-1})$, then 
\begin{equation}\label{0918}
M+\mathcal K(X_k)\neq N+\mathcal K(X_k).
\end{equation}
This follows from linear algebra and the fact that \[span(S^j,\ldots,S^{k-1}) \cap \mathcal K(X_k) = \{0\}.\] 

We proceed by showing that $\mathcal I_N:=N+\mathcal K(X_k)$ is a non-trivial closed $[S^j]$-subideal of $\mathcal L(X_k)$ for any non-zero linear subspace 
$N \subset span(S^j,\ldots,S^{k-1})$ such that $N \neq span(S^r,\ldots,S^{k-1})$ for $r = j, \ldots, k-1$. By definition  $\mathcal I_N$ is a closed linear subspace of $[S^j]$, and \eqref{prod} together with the assumption $2j \ge k$ gives that
\[
UA,\ AU  \in \mathcal K(X_k) \subset \mathcal I_N
\]
for any $U \in \mathcal I_N$ and $A \in [S^j]$. Thus $\mathcal I_N$ is a closed ideal of $[S^j]$. On the other hand, $\mathcal I_N$ is not a closed ideal of $\mathcal L(X_k)$ in view of \eqref{idchain}, \eqref{sj} and \eqref{0918}.

Next, suppose that $\mathcal M$ is a non-trivial closed $[S^j]$-subideal of  $\mathcal L(X_k)$. By Remark \ref{nonzero} and \eqref{sj} we have
\[\mathcal K(X_k)\subsetneq\mathcal M \subsetneq span(S^j,\ldots,S^{k-1})+\mathcal K(X_k)\]
and thus $\mathcal M=N+\mathcal K(X_k)$ for some linear subspace $N$ of $span(S^j,\ldots,S^{k-1})$. Moreover, $N \neq span(S^r,\ldots,S^{k-1})$  for  any $r = j,\ldots,k-1$ by \eqref{idchain} and \eqref{sj} since $\mathcal M$ is not a closed ideal of $\mathcal L(X_k)$.

Finally, since $\#(\{j,\ldots,k-1\})\ge 2$ by assumption, there is  a continuum of distinct subspaces $N \subset span(S^j,\ldots,S^{k-1})$ for which $N \neq span(S^r,\ldots,S^{k-1})$ for $r = j, \ldots, k-1$. It follows from \eqref{0918} that there is a continuum of non-trivial closed $[S^j]$-subideals of $\mathcal L(X_k)$.

Conclude that  the family $\mathfrak F_j$ in \eqref{full} has the size of the continuum
for each $j \in \mathbb N$ that satisfies  $k/2 \le j \le k-2$.
\end{proof}

\begin{remark}\label{ex1}
Let  $r\ge 2$. By Theorem \ref{tarbard} there are (continuum many)  non-trivial closed $[S^r]$-subideals of $\mathcal L(X_{2r})$, where  $dim(\mathcal L(X_{2r})/ [S^r]) = r$ in view of \eqref{bded} and \eqref{sj}. Hence Lemma \ref{codim}  does not have  a general counterpart  for the closed $\mathcal I$-subideals of $\mathcal L(X)$, 
where $\mathcal I \subset \mathcal L(X)$ has codimension $r \ge 2$.
\end{remark}

The following simple fact provides a systematic tool for producing non-trivial closed (sub)ideals of certain algebras $\mathcal L(X)$. 
Special instances have been used  e.g. in \cite[Proposition 2.1]{TW22} and \cite[Proposition 5.2]{W23}. 

\begin{lemma}\label{qsubid}
Let $X$ be a Banach space and suppose that  $q: \mathcal L(X) \to \mathcal A$ is a continuous surjective algebra homomorphism onto the 
 Banach algebra $\mathcal A$.

(i) If $\mathcal J$ is a non-trivial closed $\mathcal I$-subideal of $\mathcal L(X)$  such that $ker(q) \subset \mathcal J$, then $q(\mathcal J)$ is a non-trivial closed 
$q(\mathcal I)$-subideal of $\mathcal A$.

(ii) If $\mathcal J$ is a non-trivial closed $\mathcal I$-subideal of $\mathcal A$, then 
$q^{-1}(\mathcal J)$ is a non-trivial closed $q^{-1}(\mathcal I)$-subideal of $\mathcal L(X)$ for which $ker(q) \subset q^{-1}(\mathcal J)$.

Hence there is a bijective  correspondence between the non-trivial closed subideals $ker(q) \subset \mathcal J \subset \mathcal L(X)$ and  the non-trivial closed subideals of 
$\mathcal A$.
\end{lemma}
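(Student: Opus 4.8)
The plan is to prove Lemma~\ref{qsubid} by a direct verification, checking parts (i) and (ii) separately and then deducing the bijective correspondence. The key structural facts I will rely on repeatedly are that $q$ is surjective, continuous, and multiplicative, so that $q$ maps (closed) subalgebras to subalgebras whose closures behave well, and preimages under $q$ of closed subalgebras are closed subalgebras containing $\ker(q)$. The main conceptual points are: (a) the image of an ideal of a subalgebra is an ideal of the image subalgebra, and similarly for preimages, and (b) the assumption $\ker(q)\subset\mathcal J$ is exactly what is needed to control whether the images stay "non-trivial", i.e. fail to be ideals of $\mathcal A$.

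\smallskip

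For part (i): assume $\ker(q)\subset\mathcal J\subset\mathcal I\subset\mathcal L(X)$ with $\mathcal I$ a closed ideal of $\mathcal L(X)$ and $\mathcal J$ a closed ideal of $\mathcal I$, but $\mathcal J$ not an ideal of $\mathcal L(X)$. First I would check that $q(\mathcal I)$ and $q(\mathcal J)$ are closed: since $\ker(q)\subset\mathcal J\subset\mathcal I$, the quotient map factors and $q(\mathcal I)=q(\mathcal L(X))\cdot$-image of a closed set containing the kernel, so $q^{-1}(q(\mathcal I))=\mathcal I$ and likewise for $\mathcal J$; closedness of $q(\mathcal I)$ then follows because $q$ is an open map onto $\mathcal A$ (continuous surjection of Banach spaces) and $\mathcal I=q^{-1}(q(\mathcal I))$ is closed. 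Next, $q(\mathcal I)$ is an ideal of $\mathcal A$: given $a=q(U)\in\mathcal A$ and $q(T)\in q(\mathcal I)$, we have $a\,q(T)=q(UT)\in q(\mathcal I)$ since $UT\in\mathcal I$, and symmetrically. The same argument with $U$ replaced by an element of $\mathcal I$ shows $q(\mathcal J)$ is an ideal of $q(\mathcal I)$. Finally, non-triviality: if $q(\mathcal J)$ were an ideal of $\mathcal A$, then for $V\in\mathcal J$ and any $U\in\mathcal L(X)$ we would get $q(UV)=q(U)q(V)\in q(\mathcal J)$, hence $UV\in q^{-1}(q(\mathcal J))=\mathcal J$ (using $\ker(q)\subset\mathcal J$), and similarly $VU\in\mathcal J$, contradicting that $\mathcal J$ is not an ideal of $\mathcal L(X)$. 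Also $q(\mathcal J)$ is non-zero since $\mathcal J\supsetneq\ker(q)$ is impossible to collapse, and $q(\mathcal I)\subsetneq\mathcal A$ follows from $\mathcal I\subsetneq\mathcal L(X)$ together with $\mathcal I=q^{-1}(q(\mathcal I))$.

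\smallskip

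For part (ii): assume $\mathcal J\subset\mathcal I\subset\mathcal A$ with $\mathcal I$ a closed ideal of $\mathcal A$, $\mathcal J$ a closed ideal of $\mathcal I$, and $\mathcal J$ not an ideal of $\mathcal A$. Then $q^{-1}(\mathcal J)$ and $q^{-1}(\mathcal I)$ are closed subalgebras of $\mathcal L(X)$ by continuity and multiplicativity of $q$, and both contain $\ker(q)$; moreover $q^{-1}(\mathcal J)\subset q^{-1}(\mathcal I)$. That $q^{-1}(\mathcal I)$ is an ideal of $\mathcal L(X)$ and $q^{-1}(\mathcal J)$ is an ideal of $q^{-1}(\mathcal I)$ follow by pulling back the ideal relations through the homomorphism $q$ (for $U\in\mathcal L(X)$, $T\in q^{-1}(\mathcal I)$ we have $q(UT)=q(U)q(T)\in\mathcal I$ by surjectivity, so $UT\in q^{-1}(\mathcal I)$, etc.). For non-triviality, if $q^{-1}(\mathcal J)$ were an ideal of $\mathcal L(X)$, then applying $q$ and using surjectivity would force $\mathcal J=q(q^{-1}(\mathcal J))$ to be an ideal of $\mathcal A$, a contradiction; and $q^{-1}(\mathcal I)\subsetneq\mathcal L(X)$ since $\mathcal I\subsetneq\mathcal A$, while $q^{-1}(\mathcal J)\neq\{0\}$ since it contains $\ker(q)$ and maps onto the non-zero set $\mathcal J$. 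The final bijective correspondence is then immediate: parts (i) and (ii) give maps $\mathcal J\mapsto q(\mathcal J)$ and $\mathcal K\mapsto q^{-1}(\mathcal K)$ between the two classes, and the identities $q(q^{-1}(\mathcal K))=\mathcal K$ (surjectivity) and $q^{-1}(q(\mathcal J))=\mathcal J$ (valid precisely because $\ker(q)\subset\mathcal J$) show these are mutually inverse.

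\smallskip

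I do not anticipate a serious obstacle; this is a routine but slightly fiddly diagram-chase. The one point that requires a little care is the closedness of the \emph{images} $q(\mathcal I)$ and $q(\mathcal J)$ in part (i): this genuinely uses that $q$ is a surjective continuous linear map between Banach algebras (hence open by the open mapping theorem) together with the hypothesis $\ker(q)\subset\mathcal J$, which guarantees $q^{-1}(q(\mathcal J))=\mathcal J$ is closed so that $q(\mathcal J)$ is closed in the quotient. Without the kernel-containment hypothesis the image need not be closed and the correspondence breaks, which is exactly why that hypothesis appears in the statement.
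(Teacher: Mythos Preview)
Your proof is correct and follows essentially the same approach as the paper's. The only cosmetic difference is in establishing closedness of $q(\mathcal J)$ and $q(\mathcal I)$: the paper uses a sequential argument via the induced isomorphism $\mathcal L(X)/\ker(q)\to\mathcal A$, while you invoke the open mapping theorem together with $q^{-1}(q(\mathcal J))=\mathcal J$, but these are two phrasings of the same fact.
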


\begin{proof}
(i) We first verify that $q(\mathcal J)$ is closed in $\mathcal A$. Suppose  that $q(S) \in \overline{q(\mathcal J)}$ is arbitrary,
and let $(S_n) \subset \mathcal J$ be a sequence for which $q(S_n - S) \to 0$ as $n \to \infty$. Since $q$ induces a linear isomorphism
$\mathcal L(X)/ker(q) \to \mathcal A$, there is a sequence $(R_n) \subset ker(q) \subset \mathcal J$ for which $||S - S_n - R_n|| \to 0$  in $\mathcal L(X)$ as $n \to \infty$. 
Thus $S\in\mathcal J$ since   $\mathcal J$ is a closed subalgebra and $S_n + R_n \in \mathcal J$ for all $n \in \mathbb N$. Similarly $q(\mathcal I)$ is closed in $\mathcal A$.

Since  $q$ is a Banach algebra homomorphism  we get that $q(\mathcal J) \subset q(\mathcal I)$ and $q(\mathcal I) \subset \mathcal A$ are respective ideals.
Finally, $q(\mathcal J)$ is not an ideal of $\mathcal A$. In fact, suppose that $S \in \mathcal J$ and $U \in \mathcal L(X)$ satisfy $SU \notin \mathcal J$. It follows that
$q(S)q(U) = q(SU) \notin q(\mathcal J)$, since otherwise $SU - R \in ker(q)$ for some $R \in \mathcal J$ and hence $SU \in \mathcal J$. 
The case  where $US \notin \mathcal J$ is similar. Conclude that $q(\mathcal J)$ is a non-trivial closed 
$q(\mathcal I)$-subideal of $\mathcal A$.

(ii) The argument is an easier variant of part (i), and the details are left to the readers.
\end{proof}

There are interesting  variations of Theorem  \ref{tarbard}  for   certain spaces  introduced by Kania and Laustsen \cite{KK17}.
Let $m_1,\ldots,m_n\in\mathbb N$ be given. They constructed in \cite[Note added in proof]{KK17} a
space $Z = Y_1^{m_1}\oplus\cdots\oplus Y_n^{m_n}$, for which the Calkin algebra  
\[
\mathcal L(Z)/\mathcal K(Z)\cong M_{m_1}(\mathbb K)\oplus\cdots\oplus M_{m_n}(\mathbb K).
\] 
Here $M_{r}(\mathbb K)$ is the algebra of scalar $r \times r$-matrices. The spaces $Y_j$ were constructed in \cite[Theorem 10.4]{AH11}, and they satisfy 
$\mathcal L(Y_j,Y_k) = \mathcal K(Y_j,Y_k)$ for  $j \neq k$ with $j, k \in \{1,\ldots, n\}$. In particular,
\begin{equation}\label{KLnil2} 
\mathcal L(Y_j^{m_j}, Y_k^{m_k}) = \mathcal K(Y_j^{m_j}, Y_k^{m_k}) \textrm{ for }  j \neq k,
\end{equation} 
see (i) below. It is pointed out in  \cite[page 1023]{KK17} that the non-zero
closed ideals of $\mathcal L(Z)$ are precisely
\begin{equation}\label{klids2}
\mathcal I_N :=\{T= [T_{jk}]_{j,k = 1}^n : T_{jj} \in \mathcal K(Y_j^{m_j}) \text{ for all }j\notin N\},
\end{equation}
where $N\subset\{1,\ldots,n\}$ is any subset.

We recall that the operator $T\in\mathcal L(X,Y)$ between finite direct sums $X=X_1\oplus\cdots\oplus X_n$ and $Y=Y_1\oplus\cdots\oplus Y_m$ can be represented as an $m\times n$-operator matrix $T=[T_{jk}]$, where $T_{jk} = P_{Y_j}TJ_{X_k} \in \mathcal L(X_k,Y_j)$ and $P_{Y_j}: Y \to Y_j$,  respectively $J_{X_k}: X_k\to X$, are the natural projections and inclusions related to
the components of  the direct sums $X$ and $Y$. The following facts are well known and easy to check by using the identities $I_X=\sum_{r=1}^n J_{X_r}P_{X_r}$ and $I_Y=\sum_{r=1}^m J_{Y_r}P_{Y_r}$.
\smallskip

(i) If $T\in\mathcal L(X,Y)$, then \[T=\sum_{j=1}^m\sum_{k=1}^n J_{Y_j}T_{jk}P_{X_k}.\] In particular,
  $T\in\mathcal K(X,Y)$ if and only if $T_{jk}\in\mathcal K(X_k,Y_j)$ for all  $1\le k\le n$ and $1\le j\le m$.
\smallskip
  
  (ii) If $T,U\in\mathcal L(X)$, then \begin{equation}\label{productofoperators}[TU]_{jk}=\sum_{r=1}^n T_{jr}U_{rk}\end{equation} for all $1\le j,k\le n$.
  
\begin{example}\label{KL1}
Let $Z = Y_1^{m_1}\oplus\cdots\oplus Y_n^{m_n}$, where $m_1,\ldots,m_n\in\mathbb N$.
Then the Banach algebra $\mathcal L(Z)$ does not have any non-trivial closed subideals.
\end{example} 

\begin{proof}
Fix  $N\subset\{1,\ldots,n\}$ and let $\mathcal I_N$ be the closed ideal of $\mathcal L(Z)$ from \eqref{klids2}. Let $q: \mathcal L(Z) \to \mathcal L(Z)/\mathcal K(Z)$ be the quotient homomorphism. Since $\ker q=\mathcal K(Z)\subset \mathcal I_N$ by \eqref{klids2}, it suffices in view of Lemma \ref{qsubid} to verify that there are no non-trivial closed $q(\mathcal I_N)$-subideals of $\mathcal A:=\mathcal L(Z)/\mathcal K(Z)$. 

Define $U=[U_{jk}]_{j,k=1}^n\in\mathcal L(Z)$  by 
\[
U_{jk}=\begin{cases}
I_j,& \text{ if } j=k\in N\\
0,&\text{ otherwise}, 
\end{cases}
\]
where $I_j$ is the identity operator on the component $Y_j^{m_j} \subset Z$. Note that  $U\in \mathcal I_N$. We claim that $q(U)$ is the unit element of $q(\mathcal I_N)$.
 For this, let $T\in\mathcal I_N$. We observe that
\[[T-TU]_{jj}=T_{jj}-[TU]_{jj}=T_{jj}-\sum_{k=1}^n T_{jk}U_{kj}=\begin{cases}
0,& \text{if } j\in N\\
T_{jj}, & \text{if } j\notin N 
\end{cases}
\]
for all $1\le j\le n$. It follows that $T-TU\in\mathcal K(Z)$ in view of \eqref{KLnil2} and \eqref{klids2}. In a similarly way one verifies that $UT-T\in\mathcal K(Z)$. Consequently, $q(U)q(T)=q(UT)=q(T)$ and $q(T)q(U)=q(T)$. 

This implies that there are no non-trivial closed $q(\mathcal I_N)$-subideals of $\mathcal A$. In fact, assume that $\mathcal J$ is a closed $q(\mathcal I_N)$-subideal of $\mathcal A$. If $v\in\mathcal J$ and $a\in\mathcal A$, then 
\[va=(vq(U))a=v(q(U)a)\in \mathcal J\]
and similarly $av\in\mathcal J$.
\end{proof}

Next we provide  examples of Banach spaces $X$, where $\mathcal L(X)$ has an infinite family of closed ideals but fails to have  any non-trivial closed subideals.  For this purpose we will use the spaces recently constructed by 
Motakis \cite[Theorem A]{M23}: \textit{for any compact metric space $K$ there is 
a separable $\mathcal L^\infty$-space $X_{C(K)}$, such that   $X^*_{C(K)} \approx \ell^1$ and}
 \begin{equation}\label{Calkin1}
\mathcal L(X_{C(K)})/\mathcal K(X_{C(K)}) \cong C(K)
 \end{equation}
\textit{are isomorphic as Banach algebras}. Here $C(K)$ is the sup-normed Banach algebra of continuous scalar-valued functions defined on  $K$.
The construction of $X_{C(K)}$ is highly complicated, but we will only require the existence of a surjective Banach algebra homomorphism  $\mathcal L(X_{C(K)}) \to C(K)$ 
as given by \eqref{Calkin1}.   The results in  \cite{M23} apply to both  real and complex scalars.

Recall that if $K$ is a compact  metric space (or even a compact Hausdorff topological space), then 
the family of closed ideals $\mathcal I \subset C(K)$ coincides with  
\begin{equation}\label{cont}
\{\mathcal I(L): L \subset K \textrm{ is a closed subset}\} ,
\end{equation}
where $\mathcal I(L) = \{f \in C(K): f_{|L} = 0\}$.
We refer  e.g. to \cite[Theorem 1.4.6]{K09} for an argument that also applies to real $C(K)$-algebras.

\begin{example}\label{mot}
Let $K$ be any compact metric space and  let $X_{C(K)}$  be the space from \cite{M23}  for which  \eqref{Calkin1} holds. Fix a surjective  
Banach algebra homomorphism  $q_K: \mathcal L(X_{C(K)}) \to C(K)$ such that  $ker(q_K) = \mathcal K(X_{C(K)})$.

Then the closed ideals of $\mathcal L(X_{C(K)})$ coincide with the family
\begin{equation}\label{liftid}
\{q_K^{-1}(\mathcal I(L)): L \subset K \textrm{ is a closed subset}\},
\end{equation}
but $\mathcal L(X_{C(K)})$  does not have any non-trivial closed subideals. 
\end{example}

\begin{proof}
According to the construction in  \cite{M23} the spaces $X_{C(K)}$ have a Schauder basis,  so $ker(q_K) = \mathcal K(X_{C(K)})$ is the smallest  non-zero closed (sub)ideal
of $\mathcal L(X_{C(K)})$ by Remark \ref{nonzero}.
Hence  \eqref{cont} together with a similar argument as in Lemma \ref{qsubid}  imply that the closed ideals of $\mathcal L(X_{C(K)})$ are given by \eqref{liftid}.

Lemma  \ref{qsubid} implies that if  $\mathcal J$ is a non-trivial closed  $\mathcal I$-subideal of $\mathcal L(X_{C(K)})$, then 
$q_K(\mathcal J)$ is a non-trivial  closed $q_K(\mathcal I)$-subideal of $C(K)$. 
On the other hand,  the Banach algebras $C(K)$  do not have any non-trivial closed subideals. In fact,  the elementary 
metric version of Urysohn's lemma implies  that
any closed ideal $\mathcal I(L)$ of $C(K)$ has a bounded approximate identity, so that
any closed $\mathcal I(L)$-subideal $\mathcal J$ is  an ideal of $C(K)$ by \cite[Lemma 3.1]{TW23}.
(Alternatively, for complex scalars one may appeal to  e.g. \cite[Theorem 3.2.21]{D00}.) 
\end{proof}

\begin{remarks}\label{MPZ}
 (1)  The family of Banach spaces 
 \[
 \{X_{C(K)}: K \textrm{ is an infinite compact metric space}\}
 \]
  is fairly large:  if $K_1$ and $K_2$ are not homeomorphic metric spaces, then
 $X_{C(K_1)}$ and $X_{C(K_2)}$ are not  linearly isomorphic spaces. 
Namely, if  $U: X_{C(K_2)} \to X_{C(K_1)}$ is a linear isomorphism, then  $S \mapsto \psi(S) := U^{-1}SU$ 
 defines  a Banach algebra isomorphism $\mathcal L(X_{C(K_1)}) \to \mathcal L(X_{C(K_2)})$ for which $\psi(\mathcal K(X_{C(K_1)}) = \mathcal K(X_{C(K_2)})$.
Thus  $\psi$ induces a Banach algebra isomorphism
\[
\mathcal L(X_{C(K_1)})/\mathcal K(X_{C(K_1)}) \to  \mathcal L(X_{C(K_2)})/\mathcal K(X_{C(K_2)})
\]
through $S + \mathcal K(X_{C(K_1)}) \mapsto \psi(S) + \mathcal K(X_{C(K_2)})$, so that  $C(K_1) \cong C(K_2)$.
Moreover, if  $K_1$ and $K_2$ are compact metric spaces, then 
$C(K_1) \cong C(K_2)$ if and only if 
$K_1$ and $K_2$ are homeomorphic spaces. (See  e.g.  \cite[Theorem IV.6.26 and Corollary IV.6.27]{DS57}, which also applies to real scalars.)
\smallskip

(2) For each countable compact metric space $K$ the earlier result \cite[Theorem 5.1]{MPZ16} constructs a real Banach space $X_K$ of similar type as $X_{C(K)}$ 
for which \eqref{Calkin1} holds. The analogue of Example \ref{mot} also holds here. 
\end{remarks}

More restricted conclusions are available for earlier constructions of strange quotient algebras of particular $\mathcal L(X)$-algebras.

\begin{remarks}\label{MLW}
(1) Mankiewicz \cite[Theorem 1.1]{M89} constructed a real reflexive Banach space $X_M$ having a finite-dimensional decomposition, such that 
there is a surjective   Banach algebra homomorphism 
$q: \mathcal L(X_M) \to C(\beta \mathbb N)$, 
where  $\beta \mathbb N$ is the Stone-\v{C}ech compactification of $\mathbb N$. By arguing as in Example \ref{mot}  it follows that 
there does not exist any non-trivial closed subideals $\mathcal J$ of $\mathcal L(X_M)$ that satisfy 
\begin{equation}\label{mank}
ker(q) \varsubsetneq \mathcal J \varsubsetneq \mathcal L(X_M).
\end{equation}
The construction of $X_M$ involves random methods, and we are not aware of an explicit description of the ideal $ker(q) \subset \mathcal L(X_M)$.

\smallskip

(2)  Let   $J_p$ be the $p$-James space defined in terms of the $p$-variation norm for $p \in (1,\infty)$, see e.g. \cite[page 344]{LW89}
or \eqref{jp} from Section \ref{nsubs}. Let 
\[
Y = \Big(\bigoplus_{k \in \mathbb Z} J_{p_{k}}\Big)_{\ell^1}\ ,
\]
where  $(p_k)_{k\in \mathbb Z} \subset (1,\infty)$ is a strictly increasing sequence.
 Dales, Loy and Willis \cite[Proposition 3.5]{DLW94} found an explicit surjective Banach algebra homomorphism $q: \mathcal L(Y) \to \ell^\infty(\mathbb  Z)\cong C(\beta \mathbb Z)$, where
 $\mathcal K(Y) \varsubsetneq \mathcal W(Y) \subset ker(q)$. Here $\mathcal W(Y)$  is the closed ideal of $\mathcal L(Y)$ consisting of the weakly compact operators on $Y$. Thus the analogue of \eqref{mank} also holds in $\mathcal L(Y)$.
 \end{remarks}


\section{New examples of non-trivial closed $\mathcal K(Z_p)$-subideals of $\mathcal L(Z_p)$}\label{newsubs}

A family $\mathfrak F = \{\mathcal I_B: \emptyset \neq B \varsubsetneq \mathbb N\}$ of non-trivial closed $\mathcal K(Z_p)$-subideals was
 constructed in  \cite[Theorem 4.5]{TW22} for certain direct sums $Z_p$.
It was subsequently shown in \cite[Theorem 2.1]{TW23} that under a natural  condition on $Z_p$ the family $\mathfrak F$ has the property that 
 $\mathcal I_{B_{1}}$ and $\mathcal I_{B_{2}}$ are isomorphic  Banach algebras for any subsets  $\emptyset \neq B_1, B_2 \varsubsetneq \mathbb N$. It is known, 
see e.g. \cite[Added in Proof]{JS21}, that such a behaviour is not possible among the closed ideals of $\mathcal L(X)$ for any Banach space $X$. 
These results motivate  a more careful analysis of the structure of closed subideals of $\mathcal L(Z_p)$, where many interesting phenomena 
can be seen explicitly. In this section we find a new concrete family   
of  non-trivial closed $\mathcal K(Z_p)$-subideals of $\mathcal L(Z_p)$, which is disjoint from $\mathfrak F$ and which has  the size of the continuum. Recall also that non-trivial closed $\mathcal K(X)$-subideals can only exist within the class of Banach spaces that fail the approximation property.

\smallskip

We start by  recalling the setting of  \cite[Theorem 4.5]{TW22}.  Let $X$ be a Banach space such that $X$ has the A.P., but $X^*$ fails the A.P.  
It follows from  known results, see e.g. \cite[Theorem 1.e.5]{LT77}, that  there is a Banach space $Y$ such that
$\mathcal A(X,Y) \varsubsetneq \mathcal K(X,Y)$. Define
\begin{equation}\label{dir}
Z_p = \Big( \bigoplus_{j=0}^\infty X_j\Big)_{\ell^p},
\end{equation}
where $X_0 = Y$ and $X_j = X$ for $j \ge 1$.  Here $1 < p < \infty$ is fixed.
Let $\emptyset \neq B \varsubsetneq \mathbb N$ and put
\begin{equation}\label{osubid}
\mathcal I_B = \{S = [S_{jk}] \in \mathcal K(Z_p): S_{00} \in \mathcal A(Y),\ S_{0k} \in \mathcal A(X,Y)  \textrm{ for } k \in B\}.
\end{equation}
Here $S_{jk} = P_jSJ_k \in \mathcal K(X_k,X_j)$ for $j, k \in \mathbb N_0=\mathbb N\cup\{0\}$ by the ideal property of $\mathcal K$, 
where $P_j: Z_p \to X_j$ and $J_k: X_k \to Z_p$ are the natural projection and inclusion operators 
associated to the component spaces of the direct sum $Z_p$. We will occasionally use the alternative notation $S_{jk} = [S]_{jk}$ if this is  more convenient.
 
It was shown in  \cite[Theorem 4.5]{TW22}  that the family 
\begin{equation}\label{fam}
\mathfrak F = \{\mathcal I_B: \emptyset \neq B \varsubsetneq \mathbb N\} 
\end{equation}
consists of non-trivial closed $\mathcal K(Z_p)$-subideals of $\mathcal L(Z_p)$, which has the reversed order structure compared to $\mathcal P(\mathbb N)$.
Moreover, if $X$ satisfies the linear isomorphism condition
\begin{equation}\label{iso}
\Big( \bigoplus_{j\in \mathbb N} X\Big)_{\ell^p} \approx X,
\end{equation}
then $\mathcal I_{B_{1}} \cong \mathcal  I_{B_{2}}$
are isomorphic  Banach algebras for any $B_1 \neq B_2$, see \cite[Theorem 2.1]{TW23}. 

For any  non-zero sequence $c = (c_j) \in \ell^1$, let  
\[
supp(c) = \{j \in \mathbb N: c_j \neq 0\}
\]
be  the support of $c$.  We introduce 
\begin{align}\label{def0}
\mathcal I(c) := & \big\{S = [S_{jk}] \in \mathcal K(Z_p):  S_{00} \in \mathcal A(Y) \textrm{ and }\\
& x \mapsto \sum_{r=1}^\infty c_rS_{0r}x \textrm{ is an approximable operator } X \to Y\big\}\notag.
\end{align}

The series  $\sum_{r=1}^\infty c_rS_{0r}$ is absolutely 
convergent in the operator norm of $\mathcal K(X,Y)$, since  $c = (c_j) \in \ell^1$. 
The support $supp(c)$ can be any finite or infinite set and we will assume that the cardinality $\#(supp(c)) \ge 2$. 
Namely, if $supp(c) = \{k\}$, then we see by comparing \eqref{def0} and \eqref{osubid} that $\mathcal I(c) = \mathcal I_{\{k\}} \in \mathfrak F$.

\smallskip

The following theorem contains  the first main result of Section \ref{newsubs}. We will repeatedly use the facts that 
$\mathcal K(Y,X) = \mathcal A(Y,X)$ and $\mathcal K(X) = \mathcal A(X)$ as $X$ has the A.P.

\begin{theorem}\label{newsubids}
Let $Z_p$ be the space  from \eqref{dir}, and assume that the non-zero sequence $c = (c_j)  \in \ell^1$  satisfies  $\#(supp(c)) \ge 2$. Then the following properties hold:
\begin{enumerate}
\item[(i)]   $\mathcal I(c)$ is a closed ideal of $\mathcal K(Z_p)$, as well as a left ideal of $\mathcal L(Z_p)$.

\item[(ii)]   $\mathcal I(c)$ is not a right ideal of $\mathcal L(Z_p)$, so that $\mathcal I(c)$ is a non-trivial closed $\mathcal K(Z_p)$-subideal of $\mathcal L(Z_p)$.

\item[(iii)] $\mathcal I(c) \neq \mathcal I_B$ for any subset  $\emptyset \neq B \varsubsetneq \mathbb N$.

\item[(iv)]   If $d \in  \ell^1$ is a non-zero sequence such that $supp(d) \neq supp(c)$, then $\mathcal I(d) \neq \mathcal I(c)$.
 
\item[(v)]  Let $d\in\ell^1$ be a non-zero sequence. Then
$\mathcal I(c)=\mathcal I(d)$ if and only if $c=\lambda d$ for some non-zero scalar $\lambda$.
\end{enumerate}

Hence the family
\[
\mathfrak F_+ = \{\mathcal I(c): c \in  \ell^1 \textrm{ and }  \#(supp(c)) \ge 2\} 
\]
consists of non-trivial closed $\mathcal K(Z_p)$-subideals  of $\mathcal L(Z_p)$. The family $\mathfrak F_+$  has the size of the continuum,  and $\mathfrak F_+  \cap \mathfrak F = \emptyset$.
\end{theorem}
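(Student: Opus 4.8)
The plan is to prove the five parts (i)--(v) in order, since the concluding statement about $\mathfrak F_+$ follows by assembling them. Throughout I will freely use that $X$ has the A.P., so $\mathcal K(X) = \mathcal A(X)$ and $\mathcal K(Y,X) = \mathcal A(Y,X)$, and that $\mathcal A(X,Y)$ is a closed linear subspace of $\mathcal K(X,Y)$ which is moreover a sub-bimodule: $\mathcal A(X,Y)\mathcal L(X) \subset \mathcal A(X,Y)$ and $\mathcal L(Y)\mathcal A(X,Y) \subset \mathcal A(X,Y)$ (and likewise $\mathcal A(Y)$ is an ideal of $\mathcal L(Y)$). The key computational device is the operator-matrix multiplication rule \eqref{productofoperators}: for $S, T \in \mathcal L(Z_p)$ one has $[ST]_{jk} = \sum_{r\ge 0} S_{jr}T_{rk}$, the series converging in operator norm.

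For (i): $\mathcal I(c)$ is evidently a linear subspace of $\mathcal K(Z_p)$. For closedness, note the map $S \mapsto \sum_{r\ge 1} c_r S_{0r}$ is norm-continuous from $\mathcal K(Z_p)$ into $\mathcal K(X,Y)$ (since $\|\sum_r c_r S_{0r}\| \le \|c\|_1 \|S\|$), and $S\mapsto S_{00}$ is continuous; hence $\mathcal I(c)$ is the intersection of two preimages of closed sets. To see it is a left ideal of $\mathcal L(Z_p)$, take $U\in\mathcal L(Z_p)$ and $S\in\mathcal I(c)$; then $[US]_{00} = \sum_{r\ge 0} U_{0r}S_{r0}$, where $U_{00}S_{00}\in\mathcal A(Y)$ and for $r\ge 1$, $U_{0r}S_{r0}\in\mathcal L(X,Y)\mathcal K(Z_p\text{-component})$ lies in $\mathcal K(X,Y)=\mathcal A(X,Y)$... wait, I need it in $\mathcal A(Y)$: here $S_{r0}\in\mathcal K(Y,X)=\mathcal A(Y,X)$ for $r\ge 1$, so $U_{0r}S_{r0}\in\mathcal L(X,Y)\mathcal A(Y,X)\subset\mathcal A(Y)$, and the series converges, giving $[US]_{00}\in\mathcal A(Y)$. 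For the row-$0$ condition, compute $\sum_{k\ge 1} c_k[US]_{0k} = \sum_{k\ge 1} c_k\sum_{r\ge 0} U_{0r}S_{rk} = U_{00}\big(\sum_{k\ge1}c_kS_{0k}\big) + \sum_{r\ge 1}U_{0r}\big(\sum_{k\ge1}c_kS_{rk}\big)$; the first term is in $\mathcal L(Y)\mathcal A(X,Y)\subset\mathcal A(X,Y)$ and each summand of the second lies in $\mathcal L(X)\mathcal K(X,X)=\mathcal A(X)$... no: $\sum_{k}c_kS_{rk}$ for $r\ge1$ lies in $\mathcal K(X,X)=\mathcal A(X)$, so $U_{0r}(\cdots)\in\mathcal L(X)\mathcal A(X)\subset\mathcal A(X)\subset\mathcal A(X,Y)$? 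That is wrong since targets differ; rather for $r\ge 1$, $U_{0r}\in\mathcal L(X,Y)$ and $\sum_k c_k S_{rk}\in\mathcal A(X,X)$, giving $U_{0r}(\sum_k c_kS_{rk})\in\mathcal L(X,Y)\mathcal A(X)\subset\mathcal A(X,Y)$. Summing and using norm-convergence, $\sum_k c_k[US]_{0k}\in\mathcal A(X,Y)$. Hence $US\in\mathcal I(c)$.

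For (ii): I must exhibit $S\in\mathcal I(c)$ and $U\in\mathcal L(Z_p)$ with $SU\notin\mathcal I(c)$. Since $\#(supp(c))\ge 2$, fix distinct $a, b\in supp(c)$. Pick any $K\in\mathcal K(X,Y)\setminus\mathcal A(X,Y)$ (exists by \cite[Theorem 1.e.5]{LT77} as in the setup). Let $S$ be the operator with $S_{0a}=K$, $S_{0b}=-(c_a/c_b)K$, and all other entries zero; then $S\in\mathcal K(Z_p)$, $S_{00}=0\in\mathcal A(Y)$, and $\sum_{r\ge1}c_rS_{0r}=c_aK - c_a K = 0\in\mathcal A(X,Y)$, so $S\in\mathcal I(c)$. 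Now let $U$ be the ``permutation-type'' operator on $Z_p$ with $U_{ba}=I_X$ (sending the $a$-th block to the $b$-th block) and all other entries zero — this is a norm-one operator in $\mathcal L(Z_p)$. Then $[SU]_{0a} = \sum_r S_{0r}U_{ra} = S_{0b}U_{ba} = -(c_a/c_b)K$ and $[SU]_{0k}=0$ for $k\ne a$ (one checks $[SU]_{0b}=S_{0?}U_{?b}=0$). Hence $\sum_{k\ge1}c_k[SU]_{0k} = c_a\cdot(-(c_a/c_b)K) = -(c_a^2/c_b)K\notin\mathcal A(X,Y)$, so $SU\notin\mathcal I(c)$. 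Since $\mathcal K(Z_p)$ is the only candidate closed ideal $\subset\mathcal K(Z_p)$ below it in the obvious sense — more precisely, $\mathcal I(c)$ is an ideal of $\mathcal K(Z_p)$ (from (i), as $\mathcal K(Z_p)\subset\mathcal L(Z_p)$ acts on both sides and, for the right side, the computation above with $U\in\mathcal K(Z_p)$ lands in $\mathcal A$ by the same estimates) but not of $\mathcal L(Z_p)$, it is a non-trivial closed $\mathcal K(Z_p)$-subideal.

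For (iii), (iv), (v): these are separation arguments. For (iii), suppose $\mathcal I(c)=\mathcal I_B$. Take $k_0\in B$ and the ``diagonal-type'' operator $S$ with $S_{0k_0}=K$ for $K\in\mathcal K(X,Y)\setminus\mathcal A(X,Y)$ and other entries $0$; then $S\notin\mathcal I_B$ (as $S_{0k_0}\notin\mathcal A(X,Y)$) yet, if $k_0\notin supp(c)$, $\sum_r c_rS_{0r}=c_{k_0}K=0\in\mathcal A(X,Y)$ so $S\in\mathcal I(c)$, contradiction; hence $B\subset supp(c)$. Conversely pick distinct $a,b\in supp(c)$ and, as in (ii), the operator $S$ with $S_{0a}=K$, $S_{0b}=-(c_a/c_b)K$: then $S\in\mathcal I(c)$ but $S\notin\mathcal I_B$ once $a\in B$ (and $B\ne\emptyset$ with $B\subset supp(c)$ forces some index of $supp(c)$ into $B$; a short case analysis on whether $a$ or $b$ lies in $B$, swapping roles, yields the contradiction). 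For (iv), if $k_0\in supp(d)\setminus supp(c)$, the single-entry operator $S$ with $S_{0k_0}=K\notin\mathcal A$ lies in $\mathcal I(c)$ but not $\mathcal I(d)$. For (v): if $c=\lambda d$ then clearly the row-$0$ conditions coincide, so $\mathcal I(c)=\mathcal I(d)$. Conversely, if $\mathcal I(c)=\mathcal I(d)$ then by (iv) $supp(c)=supp(d)=:\Sigma$; pick $a\in\Sigma$ and, for any other $b\in\Sigma$, the operator $S^{(b)}$ with $S^{(b)}_{0a}=K$, $S^{(b)}_{0b}=-(c_a/c_b)K$ lies in $\mathcal I(c)$, hence in $\mathcal I(d)$, forcing $d_a + d_b(-(c_a/c_b))$ times $K$ to be approximable, i.e. $d_a/c_a = d_b/c_b$; as $b$ ranges over $\Sigma$ this common ratio $\lambda:=d_a/c_a$ gives $d=\lambda c$ on $\Sigma$, and both vanish off $\Sigma$.

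Finally, $\mathfrak F_+$ consists of non-trivial closed $\mathcal K(Z_p)$-subideals by (i)--(ii); it has the size of the continuum because the map $c\mapsto\mathcal I(c)$ sends, say, the continuum-many sequences $c = \mathbf 1_{\{1\}} + t\,\mathbf 1_{\{2\}}$ with $t\in(0,\infty)$ to pairwise distinct subideals by (v); and $\mathfrak F_+\cap\mathfrak F=\emptyset$ by (iii).

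\bigskip

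\emph{Main obstacle.} The routine parts are the module/ideal bookkeeping in (i) and the continuum/disjointness conclusions. The genuinely delicate point is \textbf{(ii)}: one must choose $S\in\mathcal I(c)$ whose row-$0$ ``cancellation'' $\sum_r c_rS_{0r}\in\mathcal A(X,Y)$ is destroyed after multiplying by a suitable $U\in\mathcal L(Z_p)$ on the right — intuitively, right multiplication can ``unshuffle'' the columns and break the linear combination that witnessed approximability. Getting the indices right (which $a,b\in supp(c)$ to use, and verifying $[SU]_{0k}$ really produces a non-approximable operator after re-summing against $c$) is where the argument has to be done carefully; the $p$-norm structure of $Z_p$ plays no role beyond guaranteeing that the relevant block operators are bounded on $Z_p$.
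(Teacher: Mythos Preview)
Your overall strategy matches the paper's, and parts (ii)--(v) are essentially correct (in (ii) you use a shift $U_{ba}=I_X$ where the paper uses the projection $U=J_rP_r$; both work). The difficulty is in part (i), where there is a genuine technical gap.

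You invoke \eqref{productofoperators} to write $[US]_{0k}=\sum_{r\ge 0}U_{0r}S_{rk}$ with norm convergence, but that formula is stated only for \emph{finite} direct sums; for the infinite direct sum $Z_p$ the convergence of this series is not automatic and is exactly what the paper isolates as Lemma~\ref{tech1}, using compactness of one factor together with $\|Q_nS-S\|\to 0$. More seriously, you then perform a Fubini-type interchange
\[
\sum_{k\ge 1}c_k\sum_{r\ge 0}U_{0r}S_{rk}\ \stackrel{?}{=}\ \sum_{r\ge 0}U_{0r}\Big(\sum_{k\ge 1}c_kS_{rk}\Big),
\]
and it is the convergence of the \emph{outer} sum on the right (over $r$) that is unclear: one only has $\|U_{0r}\|\le\|U\|$ and $\|\sum_k c_kS_{rk}\|\le\|c\|_1\|S\|$, with no summability in $r$. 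The paper avoids this entirely by summing in the other order: for each fixed $j$ it uses Lemma~\ref{tech1} to write $[US]_{0j}=U_{00}S_{0j}+R_j$ with $R_j\in\mathcal A(X,Y)$ (since $S_{lj}\in\mathcal A(X)$ for $l\ge 1$), and only \emph{then} sums against $c_j$, where $\|R_j\|\le 2\|U\|\|S\|$ and $c\in\ell^1$ give absolute convergence in $\mathcal A(X,Y)$. Finally, you dispatch the right-ideal-of-$\mathcal K(Z_p)$ property with ``the computation above \dots\ lands in $\mathcal A$ by the same estimates,'' but this requires a separate (short) argument: for $U\in\mathcal K(Z_p)$ one has $U_{lj}\in\mathcal K(X)=\mathcal A(X)$ for $l,j\ge 1$, whence each $[SU]_{0j}\in\mathcal A(X,Y)$ directly, as the paper checks explicitly.
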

 
\begin{proof}
(i) Note first that  for any $c = (c_j) \in \ell^1$ the map 
\[
\psi_c: S \mapsto \sum_{j=1}^\infty c_jS_{0j}, \quad \mathcal L(Z_p) \to \mathcal L(X,Y), 
\]
defines a bounded linear operator, since for $x \in X$ and $S \in \mathcal L(Z_p)$ we have 
\[
\Vert  \sum_{j=1}^\infty c_jS_{0j}x\Vert \le  \sum_{j=1}^{\infty} \vert c_j\vert \Vert S_{0j}\Vert \Vert x\Vert \le    \Vert c \Vert_1 \Vert S\Vert \Vert x \Vert,
\]
as $\Vert S_{0j}\Vert = \Vert P_0SJ_j\Vert \le \Vert S\Vert$ for $j \in \mathbb N$.
Deduce that 
\[
 \mathcal I(c) = \psi_c^{-1}(\mathcal A(X,Y)) \cap \{S \in \mathcal K(Z_p): S_{00} \in \mathcal A(Y)\}
\]
is a closed linear subspace of $\mathcal K(Z_p)$. 

We next show that $\mathcal I(c)$ is a left ideal of $\mathcal L(Z_p)$.
Let $S = [S_{jk}] \in \mathcal I(c)$ and $U = [U_{jk}] \in \mathcal L(Z_p)$ be arbitrary.  
According to \eqref{kprod} from the subsequent Lemma \ref{tech1},  for each $j \in \mathbb N$ 
\begin{equation}\label{lprod}
[US]_{0j} = \sum_{l = 0}^\infty U_{0l}S_{lj} = U_{00}S_{0j} + \sum_{l = 1}^\infty U_{0l}S_{lj} =  U_{00}S_{0j}  + R_j 
\end{equation}
is a norm convergent series in $\mathcal K(X,Y)$, where we denote the operator norm limit by
\[
R_j := \lim_{r\to\infty} \sum_{l = 1}^r U_{0l}S_{lj} \in \mathcal K(X,Y).
\]
Actually $R_j  \in \mathcal A(X,Y)$ for $j \in \mathbb N$, since $S_{lj} \in \mathcal A(X)$ for all $l \ge 1$.
From \eqref{lprod} we get  for any fixed $r \in \mathbb N$ that
\begin{equation}\label{fin}
x \mapsto \sum_{j=1}^r c_j[US]_{0j}x =  U_{00}\big(\sum_{j=1}^r  c_jS_{0j}x\big) + \sum_{j=1}^r c_jR_jx, \quad x \in X.
\end{equation}
Since $(c_j) \in \ell^1$ we let $r \to \infty$ in \eqref{fin}, and deduce that the pointwise operator
\[
x \mapsto \sum_{j=1}^\infty c_j[US]_{0j}x =  U_{00}\big(\sum_{j=1}^\infty  c_jS_{0j}x\big) + \sum_{j=1}^\infty c_jR_jx
\]
is approximable $X \to Y$.
Here  we use  the facts that $x \mapsto \sum_{j=1}^\infty  c_jS_{0j}x$ defines an approximable operator $X \to Y$
by assumption, and that $R_j \in \mathcal A(X,Y)$ for $j \in \mathbb N$.
Moreover, Lemma \ref{tech1} below implies that
\[
[US]_{00} = U_{00}S_{00} + \sum_{l = 1}^\infty U_{0l}S_{l0} \in \mathcal A(Y),
\]
as $S_{00}  \in \mathcal A(Y)$ and $S_{l0}  \in \mathcal A(Y,X)$ for $l \ge 1$. We conclude that  $US \in \mathcal I(c)$.

\smallskip

There remains to verify that $\mathcal I(c)$ is a right ideal of  $\mathcal K(Z_p)$. Towards this suppose that $S = [S_{jk}] \in \mathcal I(c)$ and $U = [U_{jk}] \in \mathcal K(Z_p)$. 
In order to show that $SU \in \mathcal I(c)$ recall first that $U_{lj} \in \mathcal K(X) = \mathcal A(X)$ for any $j,l \ge 1$.

This implies that for each fixed $j \in \mathbb N$ the series 
\begin{equation}\label{rprod}
[SU]_{0j} = \sum_{l = 0}^\infty S_{0l}U_{lj} = S_{00}U_{0j} + \sum_{l = 1}^\infty S_{0l}U_{lj} =  S_{00}U_{0j}  + T_j
\end{equation}
is norm convergent in $\mathcal K(X,Y)$ by \eqref{kprod} in Lemma \ref{tech1} below, where as above 
\[
T_j := \lim_{s\to\infty}  \sum_{l = 1}^sS_{0l}U_{lj} \in \mathcal A(X,Y).
\]
In particular, $[SU]_{0j} \in  \mathcal A(X,Y)$ for $j \in \mathbb N$ as $S_{00} \in \mathcal A(Y)$ by assumption. Thus the operator
$x \mapsto \sum_{j=1}^\infty c_j[SU]_{0j}x$ is approximable $X \to Y$  because  $(c_j) \in \ell^1$.
Moreover, 
\[
[SU]_{00} = S_{00}U_{00} + \sum_{l = 1}^\infty S_{0l}U_{l0} \in \mathcal A(Y),
\]
where again $S_{00}  \in \mathcal A(Y)$ and $U_{l0} \in \mathcal A(Y,X)$ for  $l \ge 1$.
Conclude that $SU \in \mathcal I(c)$. In particular, $\mathcal I(c)$
is a closed ideal of $\mathcal K(Z_p)$.

\smallskip

For the proofs of parts (ii)-(v) we fix an operator
\begin{equation}\label{fix}
S_0 \in \mathcal K(X,Y) \setminus \mathcal A(X,Y).
\end{equation}
This is possible since  $\mathcal A(X,Y) \varsubsetneq \mathcal K(X,Y)$ in the construction of $Z_p$. We also require the following observation: let $r,s\in\mathbb N$ where $s\in supp(c)$. Then the operator
\begin{equation}\label{091725}
T_{(r,s)}:=J_0S_0P_r - (c_r/c_s) J_0S_0P_s\in\mathcal I(c).
\end{equation}
 
In fact, $T_{(r,s)}\in\mathcal K(Z_p)$ by \eqref{fix} and $[T_{(r,s)}]_{00} = 0$. Moreover, for any $x \in X$ we have
\begin{align*}
\sum_{j=1}^\infty c_j[T_{(r,s)}]_{0j}x &=\sum_{j=1}^\infty c_jP_0\big(J_0S_0P_r - (c_r/c_s) J_0S_0P_s\big)J_jx\\&= c_rS_0x - c_s(c_r/c_s)S_0x  = 0.
\end{align*}

(ii) Let $r,s\in supp(c)$, $r\neq s$, and consider the operator $T:=T_{(r,s)}\in\mathcal I(c)$ defined in \eqref{091725}. Let $U = J_rP_r \in \mathcal L(Z_p)$. It follows that 
\[
TU = \big(J_0S_0P_r - (c_r/c_s) J_0S_0P_s\big)J_rP_r  = J_0S_0P_r \notin \mathcal I(c),
\]
since 
\[x\mapsto \sum_{j=1}^\infty c_j[TU]_{0j}x=\sum_{j=1}^\infty c_j[J_0S_0P_r]_{0j}x=c_rS_0x\] is not an approximable operator $X\to Y$ as $c_r\neq 0$. This means that $\mathcal I(c)$ is not a right ideal of $\mathcal L(Z_p)$, whence $\mathcal I(c)$ is a non-trivial closed $\mathcal K(Z_p)$-subideal of $\mathcal L(Z_p)$.

\smallskip

(iii) Let $ \emptyset \neq B \varsubsetneq \mathbb N$ be an arbitrary subset. Pick $r\in B$ and $s\in supp(c)\setminus\{r\}$ and consider again the operator $T:=T_{(r,s)}\in\mathcal I(c)$ from \eqref{091725}. Since \[T_{0r}=P_0\big(J_0S_0P_r- (c_r/c_s) J_0S_0P_s\big)J_r=S_0\notin \mathcal A(X,Y)\] we obtain that $T\notin \mathcal I_B$. Thus $\mathcal I(c)\neq\mathcal I_B$.

\smallskip

(iv)  Suppose $r \in supp(c) \setminus supp(d)$, and let $R = J_0S_0P_r \in \mathcal K(Z_p)$, where $S_0$ is the operator from \eqref{fix}.
Clearly $R \in \mathcal I(d)$ as $R_{0s} = 0$ for all $s \in supp(d)\cup\{0\}$. On the other hand,
\[
x\mapsto\sum_{j=1}^\infty  c_jR_{0j}x= c_rS_0x
\]
is not approximable, so that  $R \notin \mathcal I(c)$. The case where  $s \in supp(d) \setminus supp(c)$ is symmetric.

\smallskip

(v) Note first that $\mathcal I(c)=\mathcal I(d)$ holds if $c=\lambda d$ for some non-zero scalar $\lambda$. 
In fact, if $S\in\mathcal I(d)$, then  $x\mapsto\sum_{j=1}^\infty d_jS_{0j}x$ defines an approximable operator $X\to Y$, so that
\[
x\mapsto \lambda\Big(\sum_{j=1}^\infty d_jS_{0j}x\Big)=\sum_{j=1}^\infty c_jS_{0j}x
\] 
is  also approximable. Thus $S\in \mathcal I(c)$. 
The reverse inclusion  $\mathcal I(c) \subset \mathcal I(d)$ is similar.

Conversely, assume  that  $c\neq \lambda d$ for all non-zero scalars $\lambda$. If $supp(c)\neq supp(d)$, then  $\mathcal I(c)\neq  \mathcal I(d)$ by part (iv).
Thus we may assume that $supp(c)=supp(d)$. Let $r\in supp(c)$, that is,  $c_r\neq 0$ and $d_r\neq 0$. 
It follows that  there is $s\in supp (c)$ so that 
\begin{equation}\label{1114}
\frac{c_r}{d_r}\neq \frac{c_s}{d_s},
\end{equation}
since otherwise $c=(c_r/d_r)d$, which contradicts the assumptions on $c$ and $d$.
Consider once again the operator
\[
T:=T_{(r,s)}=J_0S_0P_r - (c_r/c_s) J_0S_0P_s\in\mathcal I(c)
\]
from \eqref{091725}. 
In this case the operator 
\[
x\mapsto \sum_{j=1}^\infty d_jT_{0j}x=d_rS_0x-d_s(c_r/c_s) S_0x=\frac{d_rc_s-d_sc_r}{c_s}S_0x
\]
is not approximable $X \to Y$, since $S_0\notin \mathcal A(X,Y)$ and $d_rc_s-d_sc_r \neq 0$ in view of \eqref{1114}. 
Hence  $T\notin \mathcal I(d)$, which demonstrates that $\mathcal I(c)\neq \mathcal I(d)$.

\smallskip

The unit sphere  $\{c \in \ell^1: \Vert c\Vert_1 = 1\}$ is separable, so it follows that the family $\mathfrak F_+$ has the size of the continuum. 
Part (iii)  yields that  $\mathfrak F_+  \cap \mathfrak F = \emptyset$.

\smallskip

The argument of Theorem \ref{newsubids} will be complete once  we have established  the subsequent Lemma \ref{tech1}, which was 
used in the proof of part (i).
\end{proof}

The following technical fact is based on  \cite[Lemma 4.6]{TW22}.

\begin{lemma}\label{tech1}
Let $1 < p < \infty$ and $Z_p = \big( \bigoplus_{j=0}^\infty X_j\big)_{\ell^p}$, where $X_j$ is a Banach space for  $j \in \mathbb N_0$.
If $S = [S_{jk}] \in \mathcal K(Z_p)$ and $U  = [U_{jk}] \in \mathcal L(Z_p)$, then
\begin{equation}\label{kprod}
[US]_{0r} = \sum_{l = 0}^\infty U_{0l}S_{lr} 
\end{equation}
for each $r \in \mathbb N_0$, where the right hand series converges in the operator norm of  $\mathcal K(X_r,X_0)$. Moreover, 
 \eqref{kprod} also holds in case  $S\in\mathcal L(Z_p)$ and $U\in\mathcal K(Z_p)$.
\end{lemma}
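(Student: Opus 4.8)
The plan is to express the $n$-th partial sum of the series $\sum_{l=0}^\infty U_{0l}S_{lr}$ as the operator $P_0 U Q_n S J_r$, where $Q_n=\sum_{l=0}^n J_lP_l$ is the norm-one coordinate projection of $Z_p$ onto its first $n+1$ summands, and then to let $n\to\infty$ in operator norm. The identity $\sum_{l=0}^n U_{0l}S_{lr}=P_0U\big(\sum_{l=0}^n J_lP_l\big)SJ_r=P_0UQ_nSJ_r$ is immediate from $U_{0l}=P_0UJ_l$ and $S_{lr}=P_lSJ_r$, and the two facts about $(Q_n)$ that I will use are $\|Q_n\|\le1$ and $Q_n\to I_{Z_p}$ in the strong operator topology (since $\|z-Q_nz\|^p=\sum_{l>n}\|P_lz\|^p\to0$ for each $z\in Z_p$). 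The analytic engine is the elementary observation, which I would record once, that if $T\in\mathcal K(E,F)$ and $R_n\to R$ strongly with $\sup_n\|R_n\|<\infty$, then $\|R_nT-RT\|\to0$, because $TB_E$ is totally bounded and a uniformly bounded strongly convergent net converges uniformly on totally bounded sets.

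When $S\in\mathcal K(Z_p)$ (and $U\in\mathcal L(Z_p)$ is arbitrary) this is essentially all that is needed: $SJ_r\in\mathcal K(X_r,Z_p)$, so the observation with $R_n=Q_n$ gives $\|Q_nSJ_r-SJ_r\|\to0$, and left-multiplying by the bounded operator $P_0U$ together with the partial-sum identity yields $\sum_{l=0}^n U_{0l}S_{lr}\to P_0USJ_r=[US]_{0r}$ in operator norm. The partial sums lie in $\mathcal K(X_r,X_0)$ since each $S_{lr}$ is compact, and $[US]_{0r}$ is compact because $US$ is; this gives \eqref{kprod}.

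The delicate case is $U\in\mathcal K(Z_p)$ with $S\in\mathcal L(Z_p)$ arbitrary, where one would like $\|P_0U(I-Q_n)\|\to0$; proving this directly would require approximating the tail subspaces $\overline{\bigoplus_{l>n}X_l}$ weakly within the unit ball, which need not succeed when the summands $X_j$ (hence $Z_p$) are non-reflexive. My plan here is to pass to adjoints. By Schauder's theorem $U^*\in\mathcal K(Z_p^*)$, so $U^*P_0^*\in\mathcal K(X_0^*,Z_p^*)$; since $1<p<\infty$ the dual $Z_p^*$ is isometrically $\big(\bigoplus_{j} X_j^*\big)_{\ell^{p'}}$ with $1<p'<\infty$, under which $Q_n^*$ is again a norm-one coordinate projection converging strongly to $I_{Z_p^*}$. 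Applying the observation to $T=U^*P_0^*$ and $R_n=Q_n^*$ gives $\|Q_n^*U^*P_0^*-U^*P_0^*\|\to0$; left-multiplying by the bounded operator $J_r^*S^*$, taking adjoints in the partial-sum identity, and using that the adjoint map is isometric then yields $\sum_{l=0}^n U_{0l}S_{lr}\to[US]_{0r}$ in operator norm, with the partial sums now compact because each $U_{0l}$ is. The main obstacle is precisely this asymmetry: the first case follows directly, whereas the second has to be routed through the dual structure of $Z_p$.
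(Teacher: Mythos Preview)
Your proof is correct and follows essentially the same route as the paper: both use the partial-sum identity $\sum_{l=0}^n U_{0l}S_{lr}=P_0UQ_nSJ_r$ and establish norm convergence via the compactness of $S$ (respectively $U$). The only difference is presentational: the paper cites \cite[Lemma 4.6]{TW22} for the facts $\|Q_nS-S\|\to0$ and $\|UQ_n-U\|\to0$, whereas you supply self-contained arguments, handling the second case by the natural duality step (which is in effect what underlies the cited lemma for $1<p<\infty$).
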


\begin{proof}
Assume that $S\in\mathcal K(Z_p)$ and $U\in\mathcal L(Z_p)$. 
Let $Q_n = \sum_{s=0}^n J_sP_s$ be the natural projection of $Z_p$ onto $\bigoplus_{s=0}^n X_s \subset Z_p$ for any $n \in \mathbb N_0$. 
Since $S$ is a compact operator, it follows from the proof of  \cite[Lemma 4.6]{TW22}  that 
\[
\Vert Q_nS-S \Vert \to 0 \textrm{  as }  n\to\infty.
\]
Fix $r \in \mathbb N_0$, and observe that $\sum_{l=0}^n U_{0l}S_{lr}=P_0UQ_nSJ_r$ for each $n \in \mathbb N_0$. Since $[US]_{0r} = P_0USJ_r$
we get that 
\begin{equation}\label{20241107}
\Vert \sum_{l=0}^n U_{0l}S_{lr} - [US]_{0r}\Vert  
= \Vert P_0U(Q_nS - S)J_r \Vert \to 0
\end{equation}
as $n\to\infty$.

Next suppose that $S\in\mathcal L(Z_p)$ and $U\in\mathcal K(Z_p)$.
Since  $1 < p < \infty$ and $U\in\mathcal K(Z_p)$, it follows again  from   \cite[Lemma 4.6]{TW22}.  that
\[
\Vert UQ_n-U\Vert \to 0 \textrm{  as }  n\to\infty.
\]
An analogous estimate to \eqref{20241107}  yields that 
\eqref{kprod} holds, with convergence in the operator norm of  $\mathcal K(X_r,X_0)$.
\end{proof}

The  subideals $\mathcal I(c)$ have a somewhat stronger property than stated in part (i) of Theorem \ref{newsubids}. 
Recall for $n \ge 2$ that the Banach algebra $\mathcal A$ is said to be $n$-\textit{nilpotent} if 
$a_1 \cdots a_n = 0$ whenever  $a_j \in \mathcal A$ with $j = 1,\ldots, n$. The  closed ideal $\mathcal I$ of  $\mathcal A$ is 
 $n$-nilpotent if $\mathcal I$ is  $n$-nilpotent as a Banach algebra.

\begin{remarks}\label{091925}
(1) The closed ideals $q(\mathcal I(c))$ of  $\mathcal K(Z_p)/\mathcal A(Z_p)$ are $2$-nilpotent, 
where $q: \mathcal K(Z_p) \to \mathcal K(Z_p)/ \mathcal A(Z_p)$ denotes the quotient homomorphism. 
In fact, according to \cite[Lemma 4.6]{TW22} the products $SU \in \mathcal A(Z_p)$  whenever $S  \in \mathcal I(c)$  and $U \in \mathcal K(Z_p)$, 
since $[SU]_{jk}$ are approximable operators for each $j, k \in \mathbb N_0$  by the argument for part (i) of Theorem \ref{newsubids}.

\smallskip

(2) The reader may verify that for $c=(c_j)\in\ell^1$ the  sets 
\[
\mathcal J(c) = \{S = [S_{jk}] \in \mathcal K(Z_p): S_{00} \in \mathcal A(Y) \textrm{ and } \sum_{r=1}^\infty  c_rS_{0r}P_r \in \mathcal A(Z_p,Y)\}
\]
do not give any new closed subideals of $\mathcal L(Z_p)$, as  $\mathcal J(c) = \mathcal I_{supp(c)}$. The difference to \eqref{def0} is that
\[
\big(\sum_{r=1}^\infty  c_jS_{0r}P_r\big)x =  \sum_{r=1}^\infty  c_jS_{0r}x_r, \quad x = (x_j) \in Z_p,
\]
where $S_{0r}$ acts separately  on the component  $X_r = X$ of $Z_p$.
\end{remarks}

The result in \cite[Theorem 2.1]{TW23} suggests the question whether the  closed subideals $\mathcal I(c)$ from the family  $\mathfrak F_+$ are isomorphic as 
Banach algebras to the earlier subideals $\mathcal I_B$ from  $\mathfrak F$.
It turns out that, again unexpectedly, this is the case if condition \eqref{iso} holds.

We first recall the strategy behind  \cite[Theorem 2.1]{TW23}. Let $Z$ be a Banach space and suppose that $\mathcal A \subset \mathcal L(Z)$ and  
$\mathcal B \subset \mathcal L(Z)$ are closed subalgebras.
In order to show that $\mathcal A$ and $\mathcal B$ are isomorphic Banach algebras, it
suffices to find a linear isomorphism $U: Z \to Z$ such that the inner automorphism 
\begin{equation}\label{che}
\theta(S) = U^{-1}SU,  \quad  S \in \mathcal L(Z),
\end{equation}
of $\mathcal L(Z)$ satisfies $\theta (\mathcal A) \subset \mathcal B$ and $\theta^{-1} (\mathcal B) \subset \mathcal A$. 

We note for completeness that  \eqref{che} defines the only possible algebra isomorphism between non-zero closed subideals of $\mathcal L(Z)$.
Namely, suppose conversely that $\theta: \mathcal A \to \mathcal B$ is a Banach algebra isomorphism, where 
the closed subalgebras $\mathcal A$ and $\mathcal B$ of $\mathcal L(Z)$ satisfy  $\mathcal F(Z) \subset \mathcal A$ and $\mathcal F(Z)\subset \mathcal B$.
Then  a result of 
Chernoff \cite[Corollary 3.2]{Ch73} (see also \cite[Section 1.7.15]{P94}) implies that there is a linear
isomorphism $U: Z \to Z$ such that  \eqref{che} holds. 

\begin{theorem}\label{BAiso}
The closed subideals  $\mathcal I(c) \cong \mathcal I(d)$ for any non-zero sequences $c, d \in \ell^1$, and $\mathcal I(c) \cong \mathcal I_{\{j\}}$ for any $j \in supp(c)$.

In addition, if the component space $X$ in the definition \eqref{dir} of $Z_p$ satisfies condition \eqref{iso},  then 
\begin{equation}\label{alliso}
\mathcal I(c) \cong \mathcal I_B \  \textrm{ for any } \mathcal I(c) \in \mathfrak F_+ \textrm{  and }   \mathcal I_B \in \mathfrak F.
\end{equation}
\end{theorem}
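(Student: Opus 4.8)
The plan is to use the inner-automorphism strategy recalled just before the statement: for each pair of subideals we want to match, exhibit a linear isomorphism $U \colon Z_p \to Z_p$ whose induced automorphism $\theta(S) = U^{-1}SU$ carries one subideal onto the other. The key observation is that membership in $\mathcal I(c)$ is governed by a single scalar-weighted sum of the $0$-row entries $S_{0r}$ (for $r \ge 1$) landing in $\mathcal A(X,Y)$, together with $S_{00} \in \mathcal A(Y)$; so the automorphism must act on the index set $\{1,2,\dots\}$ of components in a way that rearranges the row $(c_r S_{0r})_r$ into the row $(d_r (\theta S)_{0r})_r$ up to approximable error, while fixing the $0$-component.

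First I would handle the easiest reduction, $\mathcal I(c) \cong \mathcal I(d)$ when $\mathrm{supp}(c) = \mathrm{supp}(d)$: by Theorem \ref{newsubids}(v) one may rescale, and more generally a diagonal isomorphism $U = \bigoplus_j u_j I_{X_j}$ with $u_0 = 1$ and suitable nonzero scalars $u_j$ conjugates $S_{0j}$ to $u_0^{-1} S_{0j} u_j = u_j S_{0j}$, so choosing $u_j = d_j / c_j$ on the common support (and $u_j = 1$ off it) sends $\mathcal I(c)$ onto $\mathcal I(d)$; one checks the $S_{00}$ and $S_{0j}$-for-$j \notin \mathrm{supp}$ conditions are unaffected. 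Next, to reach $\mathcal I(c) \cong \mathcal I_{\{j\}}$ for $j \in \mathrm{supp}(c)$, and the general $\mathcal I(c) \cong \mathcal I_B$ under \eqref{iso}, I would build $U$ as a block operator that uses condition \eqref{iso} to "spread" or "collapse" infinitely many copies of $X$ — exactly as in the proof of \cite[Theorem 2.1]{TW23}. The idea: since $\big(\bigoplus_{\mathbb N} X\big)_{\ell^p} \approx X$, one can realize a permutation-like or shift-like isomorphism of the index set that merges all components indexed by $\mathrm{supp}(c)$ into a single component, converting the weighted-sum condition $\sum_r c_r S_{0r} \in \mathcal A(X,Y)$ into the single-entry condition defining $\mathcal I_{\{j\}}$, and then expanding again to match an arbitrary $B$. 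Throughout, the $0$-component $X_0 = Y$ must be left fixed by $U$ so that the $\mathcal A(Y)$-condition on $S_{00}$ and the $\mathcal A(X,Y)$-condition on the remaining entries transfer correctly; this also guarantees $U$ restricts to an isomorphism of $\mathcal K(Z_p)$ and hence of each subideal onto its image.

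The main obstacle I anticipate is the bookkeeping in the $\mathcal I(c) \cong \mathcal I_B$ step under \eqref{iso}: one must verify that after conjugating by the index-rearranging isomorphism, the identity
\[
\psi_c(S) = \sum_{r=1}^\infty c_r S_{0r} \quad\text{and}\quad \psi_{\{j\}}(\theta S) = (\theta S)_{0j}
\]
differ only by an approximable operator, and in particular that the (absolutely convergent, by $c \in \ell^1$) tails do not leave the ideal $\mathcal A(X,Y)$ when components are amalgamated — here one uses that $\mathcal A(X,Y)$ is a closed subspace and that finite blocks contribute approximable pieces, together with Lemma \ref{tech1} to justify manipulating the relevant operator-matrix sums. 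A secondary subtlety is that when $\mathrm{supp}(c)$ is infinite one cannot literally permute it onto a single index, so the amalgamation has to be performed via the isomorphism $\big(\bigoplus_{\mathbb N} X\big)_{\ell^p} \to X$ applied to the block of coordinates in $\mathrm{supp}(c)$, and one must check this respects the $\ell^p$-structure of $Z_p$ and thus defines a genuine isomorphism of $Z_p$; this is where the hypothesis \eqref{iso} is essential and where the argument would closely mirror, and cite, the construction in \cite[Theorem 2.1]{TW23}.
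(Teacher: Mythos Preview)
Your proposal has a genuine gap: the theorem asserts $\mathcal I(c) \cong \mathcal I_{\{j\}}$ \emph{unconditionally}, whereas your route to this isomorphism goes through the amalgamation isomorphism $\big(\bigoplus_{\mathbb N} X\big)_{\ell^p} \approx X$, i.e.\ through hypothesis \eqref{iso}. So at best your argument would prove a strictly weaker statement. Moreover, the two concrete mechanisms you propose both break down. The diagonal map $U = \bigoplus_r u_r I_{X_r}$ with $u_r = c_r/d_r$ on the common support need not be a bounded isomorphism of $Z_p$: for instance $c_r = 1/r^2$ and $d_r = 1/r^3$ have the same support but $c_r/d_r = r$ is unbounded. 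And the ``collapse $\mathrm{supp}(c)$ into one slot via $W\colon (\bigoplus X)_{\ell^p} \to X$'' idea does not naturally produce the weighted sum $\sum_r c_r S_{0r}$: after conjugation you get $[\theta(T)]_{0j}x = \sum_r T_{0r}\,(W^{-1}x)_r$, and forcing $(W^{-1}x)_r = c_r x$ makes $W^{-1}$ have one-dimensional range in each block, so $W$ cannot be an isomorphism.

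The missing idea, and the paper's actual device, is a \emph{shear} rather than a diagonal or collapsing map. Fix $j \in \mathrm{supp}(c)$ and define $V\colon Z_p \to Z_p$ by $[Vx]_0 = x_0$, $[Vx]_j = c_j x_j$, and $[Vx]_n = x_n + c_n x_j$ for $n \notin \{0,j\}$; in compact form $V = I_{Z_p} - J_jP_j + \sum_{r\ge 1} c_r J_r P_j$, which is bounded precisely because $c \in \ell^1$, with explicit inverse $V^{-1} = I_{Z_p} + c_j^{-1}J_jP_j - c_j^{-1}\sum_{r\ge 1} c_r J_r P_j$. Then $P_0V^{-1} = P_0$ and $VJ_j = \sum_{r\ge 1} c_r J_r$, so for any $T$ one computes directly $[\,V^{-1}TV\,]_{0j} = \sum_{r\ge 1} c_r T_{0r}$, which is exactly the defining functional $\psi_c(T)$. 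This gives $\theta(\mathcal I(c)) \subset \mathcal I_{\{j\}}$, and the reverse inclusion is a similar computation; no hypothesis on $X$ is used. Once this is in hand, $\mathcal I(c) \cong \mathcal I(d)$ follows from $\mathcal I_{\{j\}} \cong \mathcal I_{\{k\}}$ (a permutation of coordinates, \cite[Lemma 2.2]{TW23}), and only the final step $\mathcal I_{\{1\}} \cong \mathcal I_B$ for infinite $B$ genuinely needs \eqref{iso} via \cite[Theorem 2.1]{TW23}.
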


\begin{proof}  The following claim contains  the essential step of the argument.

\smallskip

\textit{Claim 1}.  Suppose that  $c = (c_j) \in \ell^1$ is a non-zero sequence, and fix $j \in supp(c)$. Then 
\[
\mathcal I(c) \cong  \mathcal I_{\{j\}}.
\]

We define $V:Z_p\to Z_p$ by the conditions 
\[
[V(x)]_n=\begin{cases}
x_0&\text{if }n=0,\\
c_jx_j&\text{if } n=j,\\
x_n+c_nx_j&\text{if } n\neq j\text{ and }n\neq 0,
\end{cases}
\]
for $x = (x_k) \in Z_p$. Here we use the notation $[V(x)]_n = P_n(V(x)) \in X_n$ for the $n$:th coordinate of $V(x) \in Z_p$.

Observe first that $V$ is bounded on $Z_p$ since $c\in\ell^1$. In fact,  it is not difficult to check that
\[
V=I_{Z_p} - J_jP_j+\sum_{r=1}^\infty c_rJ_rI_{r,j}P_j
\]
where  $I_{r,j}:X_j\to X_r$ denotes the identity map $x\mapsto x$ (recall that $X_j = X_r = X$ for $j, r \in \mathbb N$).

It follows that  $V$ is a linear isomorphism $Z_p \to Z_p$, whose  inverse $V^{-1}$ is defined by
\[
[V^{-1}(x)]_n=\begin{cases}
x_0&\text{if }n=0,\\
c_j^{-1}x_j&\text{if } n=j,\\
x_n-c_nc_j^{-1}x_j&\text{if } n\neq j\text{ and }n\neq 0.
\end{cases}
\]
In other words, 
\[
V^{-1}= I_{Z_p}  +c_j^{-1}J_jP_j-c_j^{-1}\sum_{r=1}^\infty c_r J_r I_{r,j}P_j.
\]

We claim that 
\[
\theta(\mathcal I(c))\subset \mathcal I_{\{j\}} \textrm{  and } \theta^{-1}(\mathcal I_{\{j\}})\subset \mathcal I(c),
\]
where  $\theta$ is the inner automorphism of $\mathcal L(Z_p)$ defined by $T\mapsto V^{-1}TV$.
We will apply the simple observations that 
 \begin{equation}\label{fact1}
 P_0V=P_0V^{-1}=P_0 \textrm{ and } VJ_r=V^{-1}J_r=J_r \text{ for all }r\in\mathbb N_0\setminus\{j\}.
 \end{equation}

Suppose first that $T\in\mathcal I(c)$. It is easy to see that 
\[
[\theta(T)]_{00}=P_0V^{-1}TVJ_0=P_0TJ_0=T_{00}\in\mathcal A(Y). 
\]
Next note that 
\[
VJ_j=\big(I_{Z_p}-J_jP_j+\sum_{r=1}^\infty c_rJ_rI_{r,j}P_j\big)J_j=\sum_{r=1}^\infty c_rJ_rI_{r,j}.
\]
It follows with the help of \eqref{fact1} that 
\begin{align*}
x\mapsto [\theta(T)]_{0j}x &=P_0V^{-1}TVJ_jx=P_0T\big(\sum_{r=1}^\infty c_rJ_rI_{r,j}\big)x \\
&=\sum_{r=1}^\infty c_r P_0TJ_rx = \sum_{r=1}^\infty c_r T_{0r}x
\end{align*}
defines an approximable operator $X\to Y$, since $T\in\mathcal I(c)$ by assumption.  Thus $\theta(T)\in \mathcal I_{\{j\}}$.

Towards the second inclusion $\theta^{-1}(\mathcal I_{\{j\}})\subset \mathcal I(c)$, let $T\in\mathcal I_{\{j\}}$. 
Note first that the inverse map  $\theta^{-1}$ satisfies $\theta^{-1}(T) = VTV^{-1}$.  As above we get that  $[\theta^{-1}(T)]_{00}\in\mathcal A(Y)$.
By using  \eqref{fact1} we see  for $j \in \mathbb N$ that 
\begin{align*}
x\mapsto & \sum_{r=1}^\infty c_r[\theta^{-1}(T)]_{0r}x=\sum_{r=1}^\infty c_rP_0VTV^{-1}J_rx\\
&=c_jP_0TV^{-1}J_jx+\sum_{r\neq j} c_rP_0TV^{-1}J_rx\\
&=P_0T(0,-c_1x,\ldots,-c_{j-1}x,x,-c_{j+1}x,\ldots,-c_{n+1}x,\ldots)+\sum_{r\neq j} c_rP_0TJ_rx\\
&=P_0T\big(J_jx-\sum_{r\neq j}c_rJ_rx\big)+\sum_{r\neq j} c_rP_0TJ_rx\\
&=P_0TJ_jx
\end{align*}
defines an approximable operator $X \to Y$, as $T\in\mathcal I_{\{j\}}$ by assumption. Consequently, $\theta^{-1}(T)\in \mathcal I(c)$.
Conclude that $\mathcal I(c) \cong  \mathcal I_{\{j\}}$.

\smallskip

Suppose next that $c \in \ell^1$ and $d \in \ell^1$ are arbitrary non-zero sequences. Fix $j \in supp(c)$ and $k \in supp(d)$. By Claim 1 we know that 
$\mathcal I(c) \cong  \mathcal I_{\{j\}}$ and $\mathcal I(d) \cong  \mathcal I_{\{k\}}$. Moreover, from \cite[Lemma 2.2]{TW23} we get  that
$\mathcal I_{\{j\}} \cong  \mathcal I_{\{k\}}$, so that  $\mathcal I(c) \cong \mathcal I(d)$.  
Recall  for completeness  that if $\rho: \mathbb N \to \mathbb N$ is any permutation that
interchanges the pair $(j,k)$, then the linear isometry $U: Z_p \to Z_p$ defined by
\[
U(y,x_1,x_2, \ldots) = (y,x_{\rho(1)},x_{\rho(2)}, \ldots), \quad (y,x_1,x_2, \ldots) \in Z_p,
\]
induces a Banach algebra isomorphism $T \mapsto \psi(T) = U^{-1}TU$ between  $\mathcal I_{\{j\}}$ and $\mathcal I_{\{k\}}$.

Finally,  if the component space $X$ of $Z_p$ satisfies \eqref{iso}, then 
\[
\mathcal I_A \cong \mathcal I_{\{1\}} 
\]
for any  $\emptyset \neq A \varsubsetneq \mathbb N$ by \cite[Theorem 2.1]{TW23}, which implies  that \eqref{alliso} holds.
\end{proof} 

\begin{remarks}\label{thm34}
(1) Note  that  \eqref{iso} guarantees that
$\mathcal I_A \cong \mathcal I_{\{1\}}$ for infinite subsets $A \varsubsetneq \mathbb N$, which
is outside  Claim 1 of the above proof.

\smallskip

(2) It is somewhat surprising that $\mathcal I(c) \cong \mathcal I(d)$ for any non-zero sequences $c, d \in \ell^1$. 
For instance, consider $c = (1/j^2)$ and $d = (1/2^j)$, for which 
$\lim_{j\to\infty}c_j/d_j = \infty$. Theorem  \ref{BAiso} yields that the subideals
$\mathcal I(c) \cong \mathcal I(d)$, as well as $\mathcal I(\chi_A \cdot c) \cong \mathcal I(\chi_B \cdot d)$
for any non-empty sets $\emptyset \neq A, B \subset \mathbb N$. 
Here $\chi_A \cdot c$ denotes the sequence 
$(\chi_A(j) c_j)_{j\in \mathbb N}$ for $c = (c_j) \in \ell^1$ and  $\emptyset \neq A \subset \mathbb N$.
\end{remarks}

\smallskip

Write  $\mathcal A \ncong \mathcal B$ for non-isomorphic Banach algebras $\mathcal A$ and $\mathcal B$.
The following question  is motivated by the result that  there are closed $\mathcal S(L^p)$-subideals  $\mathcal J_1$ and $\mathcal J_2$ of $\mathcal L(L^p)$ 
 with $\mathcal J_1 \ncong \mathcal J_2$
 for $1 \le p \le \infty$ and $p \neq 2$, see \cite[Theorem 3.4 and Proposition 3.6]{TW23}. 

\smallskip

\noindent \textbf{Problem}. Does there exist a Banach space $X$ together with non-trivial closed $\mathcal K(X)$-subideals $\mathcal J_1$ and $\mathcal J_2$ of
$\mathcal L(X)$ for which  $\mathcal J_1 \ncong \mathcal J_2$?
 Is this possible  for $X = Z_p$?  (Note that $X$ has to fail the A.P. in  such an example.)
    
\smallskip

Towards this question we next  construct an increasing  sequence $(\mathcal M_r)$ of 
 closed subideals of  $\mathcal K(Z_p)$ for which $\mathcal M_r \not\cong \mathcal M_s$  for $r \neq s$. 
The argument is a variant of  \cite[Theorem 3.4.(ii)]{TW23}. In particular, 
the subalgebras $\mathcal M_r \not\cong \mathcal I$ for $\mathcal I \in \mathfrak F  \cup \mathfrak F_+$,
where $\mathfrak F$ and $\mathfrak F_+$ are the families of closed subideals of $\mathcal L(Z_p)$ defined by  \eqref{osubid} and \eqref{def0}.
The subalgebras  $\mathcal M_r$ are closed $3$-subideals of $\mathcal L(Z_p)$ in the subsequent terminology of Section \ref{nsubs},
but we do not know if they are actually closed subideals (that is, $2$-subideals) of $\mathcal L(Z_p)$.

\begin{example}\label{3subids}
Let $(X,Y)$ be any pair of Banach spaces that defines the direct sum $Z_p$ in \eqref{dir}.
Fix $S_0 \in  \mathcal K(X,Y) \setminus \mathcal A(X,Y)$, and  put
\[V_s = J_0S_0P_s  \in \mathcal I_{\{1\}}\] 
for $s \ge 2$, where $\mathcal I_{\{1\}}=\{T\in\mathcal K(Z_p) : T_{00}\in \mathcal A(Y), T_{01}\in\mathcal A(X,Y)\}$ is the closed $\mathcal K(Z_p)$-subideal from  \eqref{osubid}.
Let 
\[
\mathcal M_r = span(V_2,\ldots, V_r)   + \mathcal A(Z_p) \quad \textrm{ for }  r  \ge 2.
\]
Then the  following properties hold:

\begin{enumerate}
\item[(i)]\   $\mathcal M_r$ are closed subspaces of $\mathcal I_{\{1\}}$, and 
$\mathcal A(Z_p) \varsubsetneq \mathcal M_r \varsubsetneq \mathcal M_s   \varsubsetneq  \mathcal I_{\{1\}}$
for $2 \le r < s$. 

\smallskip

\item[(ii)]\ $\mathcal M_r  \subset \mathcal I_{\{1\}}$ is a closed ideal for $r \ge 2$, so that 
$(\mathcal M_r)$ is an increasing chain of 
closed $\mathcal I_{\{1\}}$-subideals of $\mathcal K(Z_p)$.

\smallskip

\item[(iii)]\   $\mathcal M_r \not\cong \mathcal M_s$ for any $r, s \ge 2$ with $r \neq s$.
\end{enumerate}
\end{example}

\begin{proof} (i)  It is straightforward to verify that $\mathcal M_r$ are closed subspaces of $\mathcal I_{\{1\}}$ for $r \ge 2$. We observe that $(V_2,\ldots, V_r)$ is linearly independent modulo $\mathcal A(Z_p)$. Namely, if
\[
R := \sum_{j=2}^r c_jV_j  \in \mathcal A(Z_p), 
\]
then $R_{0k} = P_0\big(\sum_{j=2}^r c_jV_j\big)J_k = c_kS_0 \in \mathcal A(X,Y)$, which 
implies that $c_k = 0$ for $k = 2,\ldots, r$ (recall that $S_0 \notin \mathcal A(X,Y)$).  In particular,
\[\mathcal A(Z_p) \varsubsetneq  \mathcal M_r \varsubsetneq \mathcal M_s \varsubsetneq \mathcal I_{\{1\}}\] for  $2 \le r < s$.
\smallskip

(ii)  Recall from  \cite[Theorem 4.5.(i)]{TW22} that
\begin{equation*}\label{right}
UV \in \mathcal A(Z_p) 
\end{equation*}
for any $U \in \mathcal I_{\{1\}}$ and $V \in \mathcal K(Z_p)$.  In particular, if $U \in \mathcal M_r \subset \mathcal I_{\{1\}}$ and $V \in \mathcal I_{\{1\}} \subset \mathcal K(Z_p)$, then
$UV \in \mathcal A(Z_p) \subset \mathcal M_r$ and $VU \in \mathcal A(Z_p) \subset \mathcal M_r$.
Thus  $\mathcal M_r$ is a closed $\mathcal I_{\{1\}}$-subideal of $\mathcal K(Z_p)$ for $r \ge 2$, since the sequence 
\[
\mathcal M_r \subset \mathcal I_{\{1\}} \subset \mathcal K(Z_p) 
\]
consists of  closed relative ideals.

\smallskip

(iii) Let $2 \le r  < s$, and suppose to the contrary that 
$\theta: \mathcal M_r  \to \mathcal M_s$ is a Banach algebra isomorphism. Since $\mathcal M_r$ and $\mathcal M_s$ are closed subalgebras of $\mathcal L(Z_p)$
that contain $\mathcal A(Z_p)$,
Chernoff's theorem \cite[Corollary 3.2]{Ch73} implies that there is a linear isomorphism $U: Z_p \to Z_p$ such that
\begin{equation}\label{auto}
S \mapsto \theta(S) = U^{-1}SU,\quad  S \in \mathcal M_r.
\end{equation}
The map $\theta$ from \eqref{auto}  defines a Banach algebra isomorphism $\mathcal L(Z_p) \to \mathcal L(Z_p)$, for which
$\theta(\mathcal M_r) = \mathcal M_s$ by definition and $\theta(\mathcal A(Z_p)) = \mathcal A(Z_p)$ as $\mathcal A(Z_p)$ is a closed ideal of $\mathcal L(Z_p)$.
Hence $\theta$ induces a well-defined Banach algebra isomorphism $\psi: \mathcal M_r/ \mathcal A(Z_p) \to \mathcal M_s/ \mathcal A(Z_p)$ through
\[
\psi(S + \mathcal A(Z_p)) = \theta(S) + \mathcal A(Z_p), \quad S + \mathcal A(Z_p) \in  \mathcal M_r/ \mathcal A(Z_p).
\]
In addition,  $\theta(\mathcal M_r) = \mathcal M_s$ yields that 
$\psi(\mathcal M_r/ \mathcal A(Z_p)) = \mathcal M_s/ \mathcal A(Z_p)$.

This means that  the quotients $\mathcal M_r/ \mathcal A(Z_p)$ and $\mathcal M_s/ \mathcal A(Z_p)$ are linearly isomorphic. 
However, this is not possible as 
$dim\big(\mathcal M_r/ \mathcal A(Z_p)\big) = r - 1$
for $r \ge 2$  by the proof of part (i). Conclude that  $\mathcal M_r \not\cong \mathcal M_s$ for $r \neq s$.
\end{proof}

\begin{remark}\label{WST}
Let $X$ be a Banach space for which the algebra $\mathcal K(X)/\mathcal A(X)$ is not nilpotent. Such spaces $X$ exists by \cite[Theorem 2.9]{TW22}. In fact, there are even closed subspaces $X \subset \ell^p$ with this property for $p \neq 2$. 
(See also  \cite[Example 5.24 and Corollary 5.25]{ST11}  as well as \cite[Proposition 3.1]{TW21} for alternative constructions.)

An abstract technique, due to Wojtynski \cite[Theorem]{Wo78} and extended by Shulman and Turovskii \cite[Theorem 5.22 and Corollary 5.23]{ST11},  produces 
an infinite chain $(\mathcal I_{\alpha})$ of closed ideals of $\mathcal L(X)$ such that $\mathcal A(X) \varsubsetneq \mathcal  I_{\alpha} \varsubsetneq \mathcal K(X)$ for all $\alpha$. 
This result applies to $Z_p$ if one replaces the space $Y$ in definition \eqref{dir} by $Y \oplus Y_0$, for which the quotient $\mathcal K(Y_0)/\mathcal A(Y_0)$ is not nilpotent. 
In contrast,  the families $\mathfrak F$ and $\mathfrak F_+$ of non-trivial closed $\mathcal K(Z_p)$-subideals from this section are very explicit.
\end{remark} 


\section{Closed $n$-subideals of $\mathcal L(X)$}\label{nsubs}

In this section we study closed $n$-subideals of the Banach algebras $\mathcal L(X)$ for  Banach spaces $X$, which provides a graded extension of the concept 
of a closed subideal.  For each $n \ge 2$ we uncover various Banach spaces $X$ for which   there are 
closed $(n+1)$-subideals of $\mathcal L(X)$ that fail to be an $n$-subideal (Theorems \ref{general}, \ref{nsubids2} and \ref{nsubids3}).  
One of our main results  (Theorem \ref{nsubidchain}) constructs  
an infinite decreasing chain $(\mathcal M_n)$ of such closed $(n+1)$-subideals of $\mathcal L(X)$ for certain spaces $X$. 
Moreover, Theorem \ref{nsubid} and Proposition \ref{nsubidqn} exhibit  non-trivial closed $n$-subideals contained in the compact operators $\mathcal K(X)$ 
(in these  cases $X$ fails the A.P).

Let  $\mathcal A$ be a Banach algebra  over the real or complex scalars, $ \mathcal I\subset \mathcal A$ be a subalgebra and  $n\in\mathbb N$.
We say that $ \mathcal I$ is an (algebraic) $n$-\textit{subideal}  of $\mathcal A$ 
if there is a sequence $\mathcal J_0,\ldots,  \mathcal J_n$ of subalgebras of $\mathcal A$ such that
\begin{equation}\label{2508}
 \mathcal I=  \mathcal J_n\subset\cdots\subset  \mathcal J_1\subset  \mathcal J_0 = \mathcal A,
 \end{equation}
where $ \mathcal J_k$ is an ideal of $ \mathcal J_{k-1}$ for each $k=1,\ldots,n$. 
The  $n$-subideal  $\mathcal I$ of $\mathcal A$  is  \textit{closed},  if in \eqref{2508} the subalgebras 
$\mathcal I$ and $\mathcal J_k$, for $k=1,\ldots, n$, are  closed in $\mathcal A$ (note that by continuity it suffices that $\mathcal I$ is closed in $\mathcal A$).
In this event we will often say that \eqref{2508} is a chain of closed relative ideals associated to  $\mathcal I$.

We put
\[
\mathcal{SI}_n(\mathcal A) = \{ \mathcal I :  \mathcal I \textrm{ is a closed $n$-subideal of } \mathcal A\}.
\]
Clearly the closed 1-subideals $\mathcal I\subset \mathcal A$ are the closed ideals of $\mathcal A$, 
and the  closed $2$-subideals correspond to the closed subideals discussed in Sections \ref{Tarbard} and \ref{newsubs},  as well as in \cite{TW23}.
Condition  \eqref{2508} allows that  $\mathcal J_k = \mathcal J_{k-1}$  for some indices $k$, so that  
$\mathcal{SI}_n(\mathcal A)  \subset \mathcal{SI}_m(\mathcal A)$ for $m > n$. However, if $\mathcal I$ is a closed $m$-subideal of $\mathcal A$ for some $m \in \mathbb N$, then 
there is a minimal $n \in \mathbb N$ such that $\mathcal I \in \mathcal{SI}_n(\mathcal A)$. 

\begin{remarks}
(1) Algebraic $n$-subideals  of Banach algebras for $n \ge 2$ appear in \cite[2.2.1]{ST14} (and earlier implicitly 
in  \cite[Theorem 2.24]{ST05}). 
For Lie algebras there is a parallel concept of $n$-step Lie subideals, see e.g. \cite{Ste70}. 
The latter  notion concerns  the Lie product $[x,y] = xy - yx$ on $\mathcal A$ when applied to a Banach algebra $\mathcal A$.

For the Banach algebras $\mathcal L(X)$ the corresponding Lie algebraic concepts are very different from the (sub)ideals studied in this paper.
Let $H$ be a complex separable infinite-dimensional Hilbert space.
It is known that $span(I_H)$, $\mathcal K(H)$ and $span(I_H) + \mathcal K(H)$ 
are precisely  the proper closed Lie ideals of $\mathcal L(H)$, see \cite[Corollary 3]{FMS82}. 
Furthermore,  there are non-trivial (non-closed) $2$-subideals $\mathcal J \subset \mathcal L(H)$ that are Lie ideals of $\mathcal L(H)$, see \cite[Theorem]{V90}.

(2) Ring theoretic subideals appeared earlier in ring theory (see the discussion before Proposition \ref{andru}), though the term subideal
is not in common use.
\end{remarks}

Any non-zero closed $n$-subideal of the Banach algebras  $\mathcal L(X)$  must contain the approximable operators.

\begin{proposition}\label{propA}
Let $X$ be a  Banach space and $n\in\mathbb N$. If $\mathcal I$ is a non-zero $n$-subideal of $\mathcal L(X)$, then $\mathcal F(X)\subset \mathcal I$.
In particular, if $\mathcal I \subset \mathcal L(X)$ is a non-zero closed $n$-subideal, then $\mathcal A(X) \subset \mathcal  I$.
\end{proposition}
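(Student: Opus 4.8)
The plan is to propagate the inclusion of the finite-rank operators down a chain of relative ideals associated to $\mathcal I$; this generalizes the argument behind Remark \ref{nonzero}. Fix a chain of subalgebras
\[
\mathcal I = \mathcal J_n \subset \mathcal J_{n-1} \subset \cdots \subset \mathcal J_1 \subset \mathcal J_0 = \mathcal L(X),
\]
where $\mathcal J_k$ is an ideal of $\mathcal J_{k-1}$ for $k = 1, \ldots, n$. Since $\mathcal J_k \supset \mathcal I \neq \{0\}$, each $\mathcal J_k$ is non-zero, and I would establish by induction on $k$ that $\mathcal F(X) \subset \mathcal J_k$ for all $k = 0, 1, \ldots, n$; the case $k = n$ then gives the assertion for $\mathcal I$.

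For $k = 0$ there is nothing to prove, as $\mathcal J_0 = \mathcal L(X)$. For the inductive step I would assume $\mathcal F(X) \subset \mathcal J_{k-1}$ and fix any $T \in \mathcal J_k$ with $T \neq 0$. Since the rank-one operators span $\mathcal F(X)$, it suffices to show that an arbitrary rank-one operator $R \in \mathcal F(X)$, say $Rz = \phi(z)\,y$ with $y \in X$ and $\phi \in X^*$, already lies in $\mathcal J_k$. Choosing $x_0 \in X$ with $Tx_0 \neq 0$ and, by Hahn--Banach, $\psi \in X^*$ with $\psi(Tx_0) = 1$, the finite-rank operators $B\colon z \mapsto \phi(z)\,x_0$ and $A\colon w \mapsto \psi(w)\,y$ satisfy $ATB = R$. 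By the inductive hypothesis $A, B \in \mathcal F(X) \subset \mathcal J_{k-1}$, so the fact that $\mathcal J_k$ is an ideal of $\mathcal J_{k-1}$ gives first $TB \in \mathcal J_k$ and then $R = A(TB) \in \mathcal J_k$. Hence $\mathcal F(X) \subset \mathcal J_k$, completing the induction.

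Taking $k = n$ yields $\mathcal F(X) \subset \mathcal J_n = \mathcal I$. If in addition $\mathcal I$ is closed, then $\mathcal A(X) = \overline{\mathcal F(X)} \subset \overline{\mathcal I} = \mathcal I$. I do not expect a serious obstacle here: the only point where the structure of $\mathcal L(X)$ is used is the elementary factorization $R = ATB$ of a prescribed rank-one operator through a prescribed non-zero $T$ by means of finite-rank operators $A$ and $B$, and it is precisely this, combined with the induction hypothesis $\mathcal F(X) \subset \mathcal J_{k-1}$, that lets the relative ideal property carry $\mathcal F(X)$ one step further down the chain at each stage.
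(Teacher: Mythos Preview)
Your argument is correct and essentially identical to the paper's: both propagate $\mathcal F(X)$ down the chain one step at a time via the factorisation of a rank-one operator as $A T B$ with $A,B$ rank-one and $T$ a fixed non-zero element of the relevant subalgebra. The only cosmetic difference is that the paper frames the induction over the subideal level $n$, whereas you induct on the index $k$ along a fixed chain; the inductive step is the same computation in both cases.
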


\begin{proof}
The claim is verified by induction. The case $n=1$ is contained in Remark \ref{nonzero}, or  e.g. \cite[Theorem 2.5.8.(i)]{D00}.

Let $n\in\mathbb N$ and assume that $\mathcal  F(X)\subset \mathcal J$ holds for every non-zero $n$-subideal $\mathcal  J$ of $\mathcal L(X)$. 
Let $\mathcal I$ be a non-zero $(n+1)$-subideal of $\mathcal L(X)$.
It suffices to check that $x^*\otimes x\in \mathcal  I$ for any $x^*\in X^*$
and $x\in X$. 

Towards this, let $T\in \mathcal  I$ be a non-zero operator, and pick $y^*\in X^*, y\in X$ such that $y^*(Ty)=1$. Since $\mathcal  I$ is an $(n+1)$-subideal of $\mathcal L(X)$,
there is an $n$-subideal $\mathcal J$ of $\mathcal L(X)$ such that $\mathcal  I \subset \mathcal  J$ and $\mathcal I$ is an ideal of $\mathcal  J$. 
By assumption $\mathcal  J$ contains all rank-one operators, so that 
\begin{equation*}
x^*\otimes x=y^*(Ty)(x^*\otimes x)=(y^*\otimes x)T(x^*\otimes y)\in \mathcal  I.\qedhere
\end{equation*}
\end{proof}

It is a natural  problem for the class of Banach algebras $\mathcal A = \mathcal L(X)$ whether 
\begin{equation}\label{strict}
\mathcal{SI}_n(\mathcal L(X))  \varsubsetneq  \mathcal{SI}_{n+1}(\mathcal L(X))
\end{equation}
holds for any given $n \ge 2$, that is, does $\mathcal L(X)$ contain closed $(n+1)$-subideals $\mathcal I$ that fail to be $n$-subideals.
 Various examples for  $n = 1$ can be found in \cite{TW23}, as well as in Sections \ref{Tarbard} and \ref{newsubs}, 
 but the cases $n \ge 2$ require additional ingredients.
 
We first observe that if the closed subalgebra $\mathcal I \subset \mathcal L(X)$  witnesses  \eqref{strict} for some $n \in \mathbb N$, then   
$\mathcal I$ fails to have an approximate identity. 
Recall that the net $(e_{\alpha}) \subset \mathcal I$ is an \textit{approximate identity} for $\mathcal I$ if for all $x \in \mathcal I$ one has
\[
 \lim_{\alpha} e_{\alpha}x = x =  \lim_{\alpha} xe_{\alpha}. 
\]
The case $n = 1$ is discussed in  \cite[Lemma 3.1]{TW23}.

\begin{proposition}\label{ai}
Suppose that $\mathcal I$ is a closed $n$-subideal of the Banach algebra $\mathcal A$, where $n \ge2$. Let 
\begin{equation}\label{nsub}
\mathcal I = \mathcal J_n \subset \mathcal J_{n-1}\subset \cdots \subset \mathcal J_1 \subset \mathcal J_0=\mathcal A
\end{equation}
 be an $n$-chain of closed relative ideals. 
  If $\mathcal J_k$ or $\mathcal J_{k-1}$ has an approximate identity for some
 $k \in \{2,\ldots,n\}$, then $\mathcal I$ is a closed $(n-k+1)$-subideal of $\mathcal A$. 
 
 In particular, if $\mathcal I$ or $\mathcal J_{n-1}$ has an approximate identity, then
 $\mathcal I$ is a closed ideal of $\mathcal A$.
 \end{proposition}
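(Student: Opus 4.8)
The plan is to isolate one elementary splitting lemma and then collapse the chain \eqref{nsub} link by link. The lemma is the following: if $\mathcal D\subset\mathcal B\subset\mathcal C$ are subalgebras of a Banach algebra with $\mathcal D$ closed, $\mathcal D$ an ideal of $\mathcal B$ and $\mathcal B$ an ideal of $\mathcal C$, and if there is a net $(e_\alpha)\subset\mathcal B$ such that $d=\lim_\alpha e_\alpha d=\lim_\alpha d e_\alpha$ for every $d\in\mathcal D$, then $\mathcal D$ is an ideal of $\mathcal C$. To see this, fix $d\in\mathcal D$ and $c\in\mathcal C$; from $d=\lim_\alpha e_\alpha d$ and continuity of multiplication one gets $cd=\lim_\alpha(ce_\alpha)d$, where $ce_\alpha\in\mathcal B$ because $\mathcal B$ is an ideal of $\mathcal C$, and hence $(ce_\alpha)d\in\mathcal D$ because $\mathcal D$ is an ideal of $\mathcal B$. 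Since $\mathcal D$ is closed, $cd\in\mathcal D$, and $dc\in\mathcal D$ follows symmetrically from $d=\lim_\alpha d e_\alpha$. No bound on $\Vert e_\alpha\Vert$ is used, so an approximate identity of $\mathcal D$ (which automatically lies in $\mathcal B$) or of $\mathcal B$ (which then approximates every element of $\mathcal D\subset\mathcal B$) supplies such a net; this is precisely why the hypothesis need not mention a \emph{bounded} approximate identity, and no factorization theorem is required.

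Granting the lemma, it suffices to show that $\mathcal J_k$ is an ideal of $\mathcal A=\mathcal J_0$, since then
\[
\mathcal I=\mathcal J_n\subset\mathcal J_{n-1}\subset\cdots\subset\mathcal J_{k+1}\subset\mathcal J_k\subset\mathcal A
\]
is a chain of $n-k+1$ closed relative ideals, exhibiting $\mathcal I$ as a closed $(n-k+1)$-subideal of $\mathcal A$. Fix a net $(e_\alpha)$ as in the lemma: take $(e_\alpha)\subset\mathcal J_k$ if $\mathcal J_k$ has an approximate identity, and $(e_\alpha)\subset\mathcal J_{k-1}$ if $\mathcal J_{k-1}$ has one. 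In either case $(e_\alpha)$ lies in $\mathcal J_i$ for every $1\le i\le k-1$ and satisfies $d=\lim_\alpha e_\alpha d=\lim_\alpha d e_\alpha$ for all $d\in\mathcal J_k$. Since \eqref{nsub} is a chain of relative ideals, $\mathcal J_k$ is an ideal of $\mathcal J_{k-1}$; now perform a downward induction on $j$, where at the step from $j+1$ to $j$ one applies the lemma with $\mathcal D=\mathcal J_k$, $\mathcal B=\mathcal J_{j+1}$, $\mathcal C=\mathcal J_j$ — legitimate because $\mathcal J_k$ is already known to be an ideal of $\mathcal J_{j+1}$, $\mathcal J_{j+1}$ is an ideal of $\mathcal J_j$ by \eqref{nsub}, and $(e_\alpha)\subset\mathcal J_{j+1}$ — to deduce that $\mathcal J_k$ is an ideal of $\mathcal J_j$. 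After finitely many steps $\mathcal J_k$ is an ideal of $\mathcal J_0=\mathcal A$. The final assertion of the proposition is the special case $k=n$, where $\mathcal J_n=\mathcal I$.

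The only non-routine ingredient is the splitting lemma itself; the remainder is bookkeeping that keeps the approximating net inside the relevant intermediate subalgebra at every stage. The one point that needs care is handling the two possibilities — $\mathcal J_k$ versus $\mathcal J_{k-1}$ carrying the approximate identity — in a unified way, which the formulation "a net in $\mathcal B$ that approximates every element of $\mathcal D$" is designed to absorb, together with the elementary index check that $(e_\alpha)\subset\mathcal J_{j+1}$ throughout the induction (which holds because $j+1\le k-1$ at each step, and $\mathcal J_{k-1}\subset\mathcal J_{j+1}$ as well as $\mathcal J_k\subset\mathcal J_{j+1}$ in that range).
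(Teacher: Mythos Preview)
Your proof is correct and follows essentially the same approach as the paper: your ``splitting lemma'' is precisely \cite[Lemma 3.1]{TW23}, which the paper invokes by citation rather than reproving, and the subsequent collapse of the chain by iteration is the same idea. The only minor difference is organizational: the paper treats the two cases ($\mathcal J_k$ versus $\mathcal J_{k-1}$ carrying the approximate identity) separately---in the latter case first pushing $\mathcal J_{k-1}$ up to an ideal of $\mathcal A$ and then applying the lemma once more---whereas you absorb both cases into a single formulation (a net in $\mathcal B$ approximating every element of $\mathcal D$) and push $\mathcal J_k$ itself directly up the chain, which is slightly cleaner but not substantively different.
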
 
 
 \begin{proof}
 Let $k \in  \{2,\ldots,n\}$ be fixed. If  $\mathcal J_k$ has an approximate identity, then \cite[Lemma 3.1]{TW23} implies that 
 $\mathcal J_k \subset \mathcal J_{k-2}$ is a closed ideal, so that the closed subalgebra $\mathcal J_{k-1}$ is redundant in  \eqref{nsub}.
 By iterating this procedure we get that $\mathcal J_k$ is a closed ideal of $\mathcal A$. 
 
The same argument shows that, if $\mathcal J_{k-1}$ has an approximate identity, then $\mathcal J_{k-1}$ is closed ideal of $\mathcal A$.  Another application of  \cite[Lemma 3.1]{TW23} yields that $\mathcal J_k$ is a closed ideal of $\mathcal A$. 

Thus in both cases  $\mathcal I =  \mathcal J_n$ is a closed $(n-k+1)$-subideal of $\mathcal A$.
Note that if $\mathcal I$ or $\mathcal J_{n-1}$ has an approximative identity, then $\mathcal I$ is a closed ideal of $\mathcal A$ by choosing $k=n$ above.
  \end{proof}

\begin{remarks}\label{sect4}
(1) If \eqref{strict} holds for some $n \ge 2$, then according to the definition one has $\mathcal{SI}_{k}(\mathcal L(X))\varsubsetneq \mathcal{SI}_{k+1}(\mathcal L(X))$ for $k = 1,\ldots, n-1$. 
Moreover, if $\mathcal{SI}_n(\mathcal L(X)) = \mathcal{SI}_{n+1}(\mathcal L(X))$, then $\mathcal{SI}_m(\mathcal L(X)) = \mathcal{SI}_n(\mathcal L(X))$ for all $m > n$.

\smallskip

(2) Let  $\mathcal A$ be a $C^*$-algebra, and suppose that $\mathcal I \subset \mathcal A$ is a closed $n$-subideal for some $n \ge 2$. Then Proposition \ref{ai} implies 
that $\mathcal I$ is already a closed ideal of $\mathcal A$. Namely, any closed ideal $\mathcal J$ of a $C^*$-algebra is itself a $C^*$-algebra and
has an approximate identity, see e.g. \cite[Theorem 3.2.21]{D00}.

\smallskip

(3)  We refer to Example \ref{nonil2}, and the discussion preceding it, for examples of spaces $X$  and  closed subalgebras  
$\mathcal I\subset\mathcal L(X)$ that fail to be $n$-subideals of $\mathcal L(X)$ for any $n \in \mathbb N$.
\end{remarks}
 
Let $\mathcal  A$ be a Banach algebra, and  $\mathcal  J \subset \mathcal  A$ be a closed ideal. For $k \ge 2$ put 
\[
\mathcal J^k: = span\{a_1  \cdots a_k \mid  a_1,\ldots, a_k\in \mathcal J\}.
\]
The power  ideals $\mathcal J^k$ need not be closed in $\mathcal A$ for $k \ge 2$, and there are relevant examples among the Banach algebras 
$\mathcal A \subset \mathcal L(X)$ for suitable spaces $X$.
Let $(\mathcal N(X), \Vert \cdot \Vert_{\mathcal N})$  be the Banach operator  ideal of the nuclear operators on $X$.
Dales and Jarchow \cite[Lemma 3.4]{DJ94} observed that
$\mathcal N(X)^2$ has infinite codimension in $\mathcal N(X)$ for any infinite-dimensional Banach space $X$, whereas 
$\mathcal N(X)^2$ is $\Vert \cdot \Vert_{\mathcal N}$-dense in $\mathcal N(X)$.
Similar properties hold for $\mathcal A(P)^2 \subset \mathcal A(P)$ by \cite[Theorem 4.2]{DJ94}, where $P$ is a Pisier space. 
For general Banach algebras $\mathcal A$ there are many further examples, see e.g. \cite[p. 198]{D77}.

\smallskip
 
 The following result contains  a general principle underlying many of  our examples of 
 closed $(n+1)$-subideals that fail to be  $n$-subideals. Let $X$ be a Banach space, and $\mathcal I \subset \mathcal L(X)$. 
 We will use the operator matrix notation
\[
 \begin{bmatrix}
\mathcal I&\mathcal I\\\mathcal I&\mathcal I
\end{bmatrix} =
\left\{ \begin{bmatrix}
S_{11}&\mathcal S_{12}\\S_{21}& S_{22}
\end{bmatrix} : S_{ij} \in \mathcal I \textrm{ for } i, j = 1,2 \right\},
\]
to define classes of operators  $S = \begin{bmatrix}
S_{11}&\mathcal S_{12}\\S_{21}& S_{22}
\end{bmatrix}$ on the direct sum $X \oplus X$.

\begin{theorem}\label{general}
Let $n\in\mathbb N$. Suppose that $X$ is a Banach space for which there is a closed ideal  $\mathcal I\subset \mathcal L(X)$ and an operator $S\in \mathcal I$ 
such that 
\begin{equation}\label{crit}
S^n\notin \overline{\mathcal I^{n+1}}.
\end{equation} 
Then  $Z=X\oplus X$ admits a non-trivial closed $(n+1)$-subideal 
\[
\mathcal M \in \mathcal{SI}_{n+1}(\mathcal L(Z)) \setminus \mathcal{SI}_{n}(\mathcal L(Z)).
\]
\end{theorem}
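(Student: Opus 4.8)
The plan is to realize the required subalgebra inside $\mathcal L(Z)=\mathbb M_2(\mathcal L(X))$, the algebra of $2\times 2$ operator matrices over $\mathcal L(X)$, by \emph{grading the $(2,1)$-entry}. For $R\in\mathcal L(X)$ write $\iota_{rs}(R)$ for the matrix with $R$ in position $(r,s)$ and zeros elsewhere, and $E_{rs}=\iota_{rs}(I_X)$ for the matrix units. Recall the elementary facts that each power ideal $\mathcal I^k$ is a two-sided ideal of $\mathcal L(X)$ with $\mathcal I^{k+1}\subseteq\mathcal I^k$, so that $\overline{\mathcal I^k}$ is a closed ideal, $\overline{\mathcal I^m}$ is decreasing in $m$, and $\overline{\mathcal I^k}\,\overline{\mathcal I^m}\subseteq\overline{\mathcal I^{k+m}}$. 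Put $\mathcal J_0=\mathcal L(Z)$ and, for $k=1,\dots,n+1$,
\[
\mathcal J_k=\Big\{\, [A_{rs}]\in\mathbb M_2(\mathcal L(X)) : A_{11},A_{12},A_{22}\in\mathcal I,\ A_{21}\in\overline{\mathcal I^k}\,\Big\},
\]
and let $\mathcal M=\mathcal J_{n+1}$ (so $\mathcal J_1=\mathbb M_2(\mathcal I)$).

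First I would carry out the routine block bookkeeping: $\mathcal J_1$ is a closed ideal of $\mathcal L(Z)$, and $\mathcal J_k$ is a closed ideal of $\mathcal J_{k-1}$ for $k=2,\dots,n+1$, where the only entry requiring attention is the $(2,1)$-entry of a product, handled using that $\overline{\mathcal I^{k-1}}\cdot\mathcal I$ and $\mathcal I\cdot\overline{\mathcal I^{k-1}}$ lie in $\overline{\mathcal I^k}$, while $\overline{\mathcal I^k}\cdot\mathcal I$ and $\mathcal I\cdot\overline{\mathcal I^k}$ lie in $\overline{\mathcal I^{k+1}}\subseteq\overline{\mathcal I^k}$. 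Being an ideal of the previous algebra, each $\mathcal J_k$ is automatically a closed subalgebra, so $\mathcal M=\mathcal J_{n+1}\subset\mathcal J_n\subset\cdots\subset\mathcal J_1\subset\mathcal J_0=\mathcal L(Z)$ is a chain of closed relative ideals and $\mathcal M\in\mathcal{SI}_{n+1}(\mathcal L(Z))$. For non-triviality, note first that $S^n\neq 0$ (otherwise $S^n\in\overline{\mathcal I^{n+1}}$, contradicting \eqref{crit}) and $S^n\in\mathcal I^n\subseteq\mathcal I$, so $\iota_{11}(S^n)\in\mathcal M\setminus\{0\}$; on the other hand $E_{21}\,\iota_{11}(S^n)=\iota_{21}(S^n)\notin\mathcal M$, since its $(2,1)$-entry $S^n$ lies outside $\overline{\mathcal I^{n+1}}$. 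Hence $\mathcal M$ is not even a left ideal of $\mathcal L(Z)$, so it is a non-trivial closed $(n+1)$-subideal.

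The hard part will be to show $\mathcal M\notin\mathcal{SI}_n(\mathcal L(Z))$. I would first record the following elementary observation, valid in any Banach algebra $\mathcal B$: \emph{if $\mathcal N\in\mathcal{SI}_n(\mathcal B)$, then $\mathcal B\cdot\mathcal N^n\subseteq\mathcal N$.} Indeed, fixing a chain $\mathcal N=\mathcal J_n'\subset\cdots\subset\mathcal J_0'=\mathcal B$ of relative ideals and elements $b\in\mathcal B$, $m_1,\dots,m_n\in\mathcal N$, one proves by induction on $j$ that $b\,m_1\cdots m_j\in\mathcal J_j'$: the base case is $b\,m_1\in\mathcal B\cdot\mathcal J_1'\subseteq\mathcal J_1'$, and the step uses $b\,m_1\cdots m_{j-1}\in\mathcal J_{j-1}'$ together with $m_j\in\mathcal N\subseteq\mathcal J_j'\subseteq\mathcal J_{j-1}'$ and $\mathcal J_{j-1}'\cdot\mathcal J_j'\subseteq\mathcal J_j'$. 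Now assume, for contradiction, that $\mathcal M\in\mathcal{SI}_n(\mathcal L(Z))$. Since $\iota_{11}(S)\in\mathcal M$ and $\iota_{11}(S)^n=\iota_{11}(S^n)$, we have the factorization
\[
\iota_{21}(S^n)=E_{21}\cdot\iota_{11}(S)\cdot\iota_{11}(S)\cdots\iota_{11}(S)
\]
of $\iota_{21}(S^n)$ as a product of the element $E_{21}\in\mathcal L(Z)$ with $n$ factors from $\mathcal M$; by the observation this forces $\iota_{21}(S^n)\in\mathcal M$, i.e. $S^n\in\overline{\mathcal I^{n+1}}$, contradicting \eqref{crit}. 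Therefore $\mathcal M\in\mathcal{SI}_{n+1}(\mathcal L(Z))\setminus\mathcal{SI}_n(\mathcal L(Z))$.

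The principal obstacle is choosing the graded family $\mathcal J_k$ correctly: it must simultaneously admit a chain of closed relative ideals of length $n+1$ reaching $\mathcal M$ and violate the identity $\mathcal B\cdot\mathcal N^n\subseteq\mathcal N$ that every $n$-subideal satisfies, the violation coming from a witness assembled out of exactly $n$ copies of one element of $\mathcal M$ and a single outside multiplier. Grading the $(2,1)$-corner achieves both at once and converts the degree gap $S^n\in\overline{\mathcal I^n}\setminus\overline{\mathcal I^{n+1}}$ of \eqref{crit} directly into the separation of $\mathcal{SI}_n$ from $\mathcal{SI}_{n+1}$; what then remains is only the standard $2\times 2$ block verification together with the elementary properties of the decreasing chain $(\overline{\mathcal I^k})_k$.
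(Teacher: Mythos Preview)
Your proof is correct, and it uses a genuinely different construction from the paper's. The paper also works in $\mathbb M_2(\mathcal L(X))$ and also exploits the key observation that any $\mathcal N\in\mathcal{SI}_n(\mathcal B)$ must satisfy $\mathcal B\cdot\mathcal N^n\subseteq\mathcal N$ (stated there as a separate lemma), together with the same witness $E_{21}\cdot\iota_{11}(S)^n=\iota_{21}(S^n)$. The difference lies in the choice of the chain: the paper takes $\mathcal J=\mathbb M_2(\mathcal I)$, sets $T=\iota_{11}(S)$, and defines
\[
\mathcal M_k=\mathrm{span}(T,\dots,T^k)+\overline{\mathcal J^{k+1}},\qquad k=1,\dots,n,
\]
so that the target subalgebra $\mathcal M_n$ is a finite-dimensional extension of the closed ideal $\overline{\mathcal J^{n+1}}=\mathbb M_2\big(\overline{\mathcal I^{n+1}}\big)$ of $\mathcal L(Z)$. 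Your construction instead grades only the $(2,1)$-entry, letting the other three entries range freely over $\mathcal I$; this produces a larger subalgebra than the paper's $\mathcal M_n$ and makes both closedness and the relative-ideal verification a one-line block check. The paper's choice, on the other hand, makes $\mathcal M_n$ a finite-codimension extension of a closed ideal of $\mathcal L(Z)$, a feature exploited later when computing the powers $\overline{[\mathcal M_n]^{n+1}}$ and $[\mathcal M_n]^n$ (although your $\mathcal M$ would serve equally well there). Both approaches rely on the same underlying mechanism: the degree gap $S^n\in\overline{\mathcal I^n}\setminus\overline{\mathcal I^{n+1}}$ is transported into the $(2,1)$-corner by a single outside multiplier.
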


\begin{proof}
Let
\[
\mathcal J=\begin{bmatrix}
\mathcal I&\mathcal I\\\mathcal I&\mathcal I
\end{bmatrix} \subset \mathcal L(Z).
\]
It is straightforward to check that $\mathcal J$ is  a closed ideal of $\mathcal L(Z)$.  
Put
\[T=\begin{bmatrix}
S&0\\0&0
\end{bmatrix}\in \mathcal J,
\]
and define
\begin{equation}\label{nsubgen}
\mathcal M_k  :=span(T,\ldots,T^k)+\overline{\mathcal J^{k+1}} \subset \mathcal J
\end{equation}
for $1 \le k \le n$ and $\mathcal M_0: = \mathcal J$. We first claim that 
\begin{equation}\label{newchain}
\mathcal M_n \subset \mathcal M_{n-1} \subset \cdots \subset  \mathcal M_1 \subset \mathcal J \subset \mathcal L(Z)
\end{equation}
is a chain of closed relative ideals, so that $\mathcal M_n \in \mathcal{SI}_{n+1}(\mathcal L(Z))$.

In fact, each $\mathcal M_k$ is a closed subspace of $\mathcal J$ as a finite-dimensional extension of the closed subspace $\overline{\mathcal J^{k+1}}$ of $\mathcal L(Z)$.
Moreover, $\mathcal M_k \subset \mathcal M_{k-1}$ for $k = 1,\ldots, n$, since $T^k \in \mathcal J^k$ and $\overline{\mathcal J^{k+1}} \subset \overline{\mathcal J^{k}}$.
We next verify that $\mathcal M_k$ is an ideal  of $\mathcal M_{k-1}$.
Towards this fix $k \in \{1,\ldots,n\}$ and let 
\[
U = \sum_{r=1}^k a_rT^r + U_0 \in \mathcal M_k, \quad V = \sum_{s=1}^{k-1} b_sT^s + V_0 \in \mathcal M_{k-1}
\]
be arbitrary, where $U_0 \in \overline{\mathcal J^{k+1}}$ and $V_0 \in \overline{\mathcal J^{k}}$. We claim  that 
\[
UV  =  \sum_{r=1}^k\sum_{s=1}^{k-1} a_rb_s T^{r+s}  +   U_0\big( \sum_{s=1}^{k-1} b_sT^s +V_0 \big) +  \big(\sum_{r=1}^k a_rT^r\big)V_0    \in \mathcal M_k.
\]
To see this, note that  the first term splits as 
\[
 \sum_{r=1}^k\sum_{s=1}^{k-1} a_rb_s T^{r+s}  := U_1 + V_1 \in span(T,\ldots,T^k)+\mathcal J^{k+1}\subset\mathcal M_k, 
\]
since $T^j \in \mathcal J^{k+1}$ for any $j = k+1,\ldots, 2k-1$. Moreover, 
\[
U_0\big( \sum_{s=1}^{k-1} b_sT^s +V_0 \big) \in \overline{\mathcal J^{k+1}}\subset \mathcal M_{k}
\] 
as $\overline{\mathcal J^{k+1}}$ is an ideal of $\mathcal L(Z)$. Finally,  
$T^jV_0 \in  \overline{\mathcal J^{k+1}}$ for $j = 1,\ldots, k$ by approximation, so that 
$\big(\sum_{r=1}^k a_rT^r\big)V_0  \in  \overline{\mathcal J^{k+1}}\subset\mathcal M_k$.  

The fact that 
$VU \in  \mathcal M_k$ is verified in an analogous manner. 
Altogether  \eqref{newchain}  implies that  $\mathcal M_n \in \mathcal{SI}_{n+1}(\mathcal L(Z))$ is a closed $(n+1)$-subideal. 

\smallskip

We next claim that $\mathcal M_n\notin \mathcal{SI}_n(\mathcal L(Z))$. In view of  Lemma \ref{newlemma} below it suffices to verify that
\begin{equation}\label{1409} 
\begin{bmatrix}
0&0\\
S^n&0
\end{bmatrix} =
\begin{bmatrix}
0&0\\
I_X&0
\end{bmatrix}\circ T^n\notin  \mathcal M_n.
\end{equation}
Suppose to the contrary that $\begin{bmatrix}
0&0\\
S^n&0
\end{bmatrix}\in\mathcal M_n$. Hence  there are scalars $a_1,\ldots, a_n$ 
and  $U = \begin{bmatrix}
U_{11}&U_{12}\\
U_{21}&U_{22}
\end{bmatrix} \in  \overline{\mathcal J^{n+1}}$ with 
\[
\begin{bmatrix}
0&0\\
S^n&0
\end{bmatrix} = \sum_{k=1}^n a_kT^k + U = \begin{bmatrix}
\sum_{k=1}^n a_kS^k+U_{11}&U_{12}\\
U_{21}&U_{22}
\end{bmatrix}.
\]
On the other hand, one verifies  by induction that 
\[
\overline{\mathcal J^{n+1}}=\begin{bmatrix}
\overline{\mathcal I^{n+1}}&\overline{\mathcal I^{n+1}}\\\overline{\mathcal I^{n+1}}&\overline{\mathcal I^{n+1}}
\end{bmatrix}.
\]
By combining these facts we get  that the component $S^n = U_{21} \in \overline{\mathcal I^{n+1}}$, 
which  contradicts the assumption \eqref{crit} of the theorem.
\end{proof}
 Above we made use of the following basic lemma, which we state separately for explicit reference.
 \begin{lemma}\label{newlemma}
 Let $n\in\mathbb N$ and suppose that $\mathcal M_n$ is a closed subalgebra of the Banach algebra $\mathcal A$. If $\mathcal M_n\in\mathcal{SI}_{n}(\mathcal A)$, then $s\cdot t^n\in \mathcal M_n$ for all $s\in\mathcal A$ and $t\in\mathcal M_n$.
 \end{lemma}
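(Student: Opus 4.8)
The plan is to prove the contrapositive-flavoured statement directly by induction on $n$, using the defining chain of relative ideals for a closed $n$-subideal. Suppose $\mathcal M_n \in \mathcal{SI}_n(\mathcal A)$ and fix a chain
\[
\mathcal M_n = \mathcal J_n \subset \mathcal J_{n-1} \subset \cdots \subset \mathcal J_1 \subset \mathcal J_0 = \mathcal A
\]
of closed relative ideals, where $\mathcal J_k$ is an ideal of $\mathcal J_{k-1}$ for each $k = 1,\ldots,n$. Given $s \in \mathcal A$ and $t \in \mathcal M_n$, I want to show $s t^n \in \mathcal M_n$.

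The base case $n = 1$ is immediate: $\mathcal M_1$ is then a closed ideal of $\mathcal A$, so $s t \in \mathcal M_1$ for all $s \in \mathcal A$, $t \in \mathcal M_1$. For the inductive step, I would first observe that $t \in \mathcal J_n \subset \mathcal J_1$, and since $\mathcal J_1$ is an ideal of $\mathcal J_0 = \mathcal A$, the element $st \in \mathcal J_1$. Now consider the element $u := st$; it lies in $\mathcal J_1$, and within $\mathcal J_1$ we have the shorter chain
\[
\mathcal M_n = \mathcal J_n \subset \mathcal J_{n-1} \subset \cdots \subset \mathcal J_1,
\]
which exhibits $\mathcal M_n$ as a closed $(n-1)$-subideal of the Banach algebra $\mathcal J_1$. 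Note also that $t^{n-1} \in \mathcal M_n$ since $\mathcal M_n$ is a subalgebra. Applying the inductive hypothesis inside $\mathcal J_1$ with the ``ambient algebra'' element $u = st \in \mathcal J_1$ and the ``subideal'' element $t \in \mathcal M_n$, we obtain $u \cdot t^{n-1} \in \mathcal M_n$, that is, $s t^n = (st) t^{n-1} \in \mathcal M_n$, as desired.

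I expect the only subtle point to be making sure the induction is set up against the right statement: the hypothesis must be quantified over \emph{all} Banach algebras (not just $\mathcal A$), so that it can be reapplied to the subalgebra $\mathcal J_1$ in place of $\mathcal A$. Once the statement is phrased as ``for every Banach algebra $\mathcal B$, every closed $(n-1)$-subideal $\mathcal N$ of $\mathcal B$, every $s \in \mathcal B$ and $t \in \mathcal N$ one has $s t^{n-1} \in \mathcal N$,'' the step goes through cleanly, since $\mathcal J_1$ is itself a Banach algebra in the inherited norm and $\mathcal M_n$ is a closed $(n-1)$-subideal of it. There are no estimates or approximation arguments needed; the whole proof is a short bookkeeping of which element lives in which link of the chain.
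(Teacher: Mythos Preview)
Your proof is correct and is essentially the same as the paper's: the paper fixes the chain $\mathcal M_n = \mathcal J_n \subset \cdots \subset \mathcal J_1 \subset \mathcal A$, notes $st \in \mathcal J_1$, and then iterates the relative ideal property down the chain to get $st^k \in \mathcal J_k$ for each $k$, which is exactly what your induction on $n$ unwinds to. Your care in quantifying the induction hypothesis over all Banach algebras is the right bookkeeping, but the underlying idea is identical.
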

 \begin{proof}
 Let
 \[\mathcal M_n=\mathcal J_n\subset \cdots\subset \mathcal J_1\subset\mathcal A\]
 be a chain of closed relative ideals associated to $\mathcal M_n$. Since $s\cdot t\in\mathcal J_1$ and $t\in\mathcal J_k$ for all $k=1,\ldots,n$, by using the relative ideal property at each step of the chain, we obtain that
 $s\cdot t^n\in \mathcal M_n$.
 \end{proof}

Conceptually the most elementary examples of non-trivial closed $(n+1)$-subideals of $\mathcal L(X)$  that fail to be $n$-subideals can be defined  on  
finite direct sums of different $\ell^{p}$-spaces.  For $n \in \mathbb N$,  let  $1 \le  p_1<p_2<\cdots<p_{n+1} \le  \infty$ and define 
\begin{equation}\label{elem}
X_n = \bigoplus_{j=1}^{n+1} \ell^{p_j}. 
\end{equation}
For unicity of notation we use the convention that $\ell^{\infty} = c_0$ if $p_{n+1} = \infty$. 
We equip $X_n$ with the max-norm, though our results are also valid for equivalent norms.
The strictly singular operators $\mathcal S(X_n)$ will be the closed reference ideal $\mathcal I$ of $\mathcal L(X_n)$ in this application of 
Theorem  \ref{general}. We will repeatedly  use  the following facts.

For any pair of different spaces $Y$ and $Z$  from $\{\ell^p: 1 \le p < \infty\} \cup \{c_0\}$ 
we have
\begin{equation}\label{prop1}
\mathcal L(Y,Z) = \mathcal S(Y,Z) 
\end{equation}
by the total incomparability
of these spaces, see e.g.   \cite[p. 75]{LT77}. If  $X = \ell^q$ with $q > p$ or $X = c_0$, then  by Pitt's theorem
\begin{equation}\label{prop2}
\mathcal L(X, \ell^p) = \mathcal K(X, \ell^p),
\end{equation}
see e.g. \cite[Proposition 2.c.3]{LT77}. Moreover, 
\begin{equation}\label{prop3}
\mathcal S(Y) = \mathcal K(Y)
\end{equation}
for any $Y$ from $\{\ell^p: 1 \le p < \infty\} \cup \{c_0\}$, see e.g.  \cite[p. 76]{LT77} or \cite[Sections 5.1-5.2]{Pi80}.

Properties \eqref{prop1} - \eqref{prop3} imply that $S =  [S_{jk}] \in \mathcal S(X_n)$
if and only if $S_{jk}$ is compact for $1 \le j \le k \le n+1$ and $S_{jk}$ is bounded  for $1 \le k < j \le n+1$. In other words, the 
operator matrix of $S \in \mathcal S(X_n)$ modulo $\mathcal K(X_n)$ is lower subtriangular and has  a $0$-diagonal.
 
 \begin{theorem}\label{nsubids2}
Let $X_n$ be the space from   \eqref{elem}, where we fix $n \in \mathbb N$ and 
$1 \le  p_1< \cdots<p_{n+1} \le  \infty$. Then the following properties hold: 
\begin{enumerate}
\item[(i)]  the quotient algebra $\mathcal S(X_n)/ \mathcal K(X_n)$ is $(n+1)$-nilpotent, and
\[
\overline{\mathcal S(X_n)^{n+1}} = \mathcal K(X_n),
\]

\smallskip

\item[(ii)] there is  $S \in \mathcal S(X_n)$ such that  $S^n  \notin \mathcal K(X_n)$,

\smallskip

\item[(iii)]  there is a closed subalgebra  $\mathcal M \subset \mathcal S(X_n)$
for which 
\[
\mathcal M \in \mathcal{SI}_{n+1}(\mathcal L(X_n)) \setminus \mathcal{SI}_{n}(\mathcal L(X_n)).
\]
\end{enumerate}
\end{theorem}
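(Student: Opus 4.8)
The plan is to verify the three parts in order, using the structural description of $\mathcal S(X_n)$ modulo $\mathcal K(X_n)$ recorded just before the statement (operator matrices that are lower subtriangular with zero diagonal), and then to feed part (i) and (ii) into Theorem \ref{general}. For part (i), the key observation is that multiplying $n+1$ matrices that are each lower subtriangular with zero diagonal, modulo $\mathcal K(X_n)$, must yield a compact operator: writing the product $S^{(1)}\cdots S^{(n+1)}$ entrywise via repeated application of \eqref{productofoperators}, each summand is a composition of $n+1$ blocks $[S^{(1)}]_{j_0 j_1}[S^{(2)}]_{j_1 j_2}\cdots[S^{(n+1)}]_{j_n j_{n+1}}$ with strictly decreasing indices $j_0>j_1>\cdots>j_{n+1}$ modulo compacts; since the index set $\{1,\ldots,n+1\}$ has only $n+1$ elements, a strictly decreasing chain of length $n+2$ is impossible, so the product lies in $\mathcal K(X_n)$. (More carefully: each off-diagonal block is itself compact by \eqref{prop2}, and each diagonal block of a strictly singular operator is compact by \eqref{prop3}, so any product of $n+1$ such blocks in which at least one factor is a diagonal block is compact, and a product with no diagonal block forces the strictly decreasing chain.) This gives $(n+1)$-nilpotency of $\mathcal S(X_n)/\mathcal K(X_n)$; the inclusion $\overline{\mathcal S(X_n)^{n+1}}\subset\mathcal K(X_n)$ follows, and the reverse inclusion $\mathcal K(X_n)\subset\overline{\mathcal S(X_n)^{n+1}}$ holds because any compact operator is already an $(n+1)$-fold product (e.g. pre- and post-compose a rank-one block with identities supported on the subdiagonal to realize it as a product of subtriangular strictly singular operators, or simply note $\mathcal K(X_n)\subset\mathcal S(X_n)^{n+1}$ by inserting suitable shift blocks).

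For part (ii), the plan is to build an explicit nilpotent ``shift'' operator on $X_n$. Choose, for each $j=1,\ldots,n$, a fixed bounded (necessarily compact, by Pitt's theorem \eqref{prop2}, since $p_{j+1}>p_j$ or $p_{j+1}=\infty$) operator $B_j\colon \ell^{p_{j+1}}\to\ell^{p_j}$ that is non-compact as a map... — no: one wants each $B_j$ compact but the key point is that their composition $B_1B_2\cdots B_n$ is \emph{not} compact. Concretely, identify each $\ell^{p_j}$ with $\ell^{p_j}$ and let $B_j$ be a formal inclusion-type map whose $n$-fold composite $\ell^{p_{n+1}}\to\ell^{p_1}$ restricts to an isomorphism on an infinite-dimensional subspace; since $p_1<p_{n+1}$ the natural inclusion $\ell^{p_1}\hookrightarrow\ell^{p_{n+1}}$... one must be slightly careful about which direction gives a bounded operator, but with $1\le p_1<\cdots<p_{n+1}\le\infty$ the formal inclusions $\ell^{p_j}\hookrightarrow\ell^{p_{j+1}}$ are bounded and compose to the bounded inclusion $\ell^{p_1}\hookrightarrow\ell^{p_{n+1}}$, which is not compact, and each single inclusion $\ell^{p_j}\hookrightarrow\ell^{p_{j+1}}$ \emph{is} compact by \eqref{prop2} applied to the Pitt direction — here $\ell^{p_{j+1}}$ into $\ell^{p_j}$ is the compact direction, so I should place the blocks as $B_j\colon\ell^{p_{j+1}}\to\ell^{p_j}$ equal to a fixed non-compact bounded operator (e.g. a restriction-and-dilation), each of which is compact by Pitt, while their composite is not. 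Assembling $S=\sum_{j=1}^{n} J_{p_j}B_j P_{p_{j+1}}$ (sum over the subdiagonal), we get $S\in\mathcal S(X_n)$ with $S^n = J_{p_1}(B_1\cdots B_n)P_{p_{n+1}}\notin\mathcal K(X_n)$ and $S^{n+1}=0$.

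Part (iii) is then immediate: by (i) and (ii) the closed ideal $\mathcal I:=\mathcal S(X_n)\subset\mathcal L(X_n)$ and the operator $S$ satisfy $\overline{\mathcal I^{n+1}}=\mathcal K(X_n)$ and $S^n\notin\mathcal K(X_n)=\overline{\mathcal I^{n+1}}$, which is exactly hypothesis \eqref{crit} of Theorem \ref{general}. Strictly speaking Theorem \ref{general} then produces the desired subalgebra on $Z=X_n\oplus X_n$; but $X_n\oplus X_n\approx X_n$ since each summand $\ell^{p_j}\oplus\ell^{p_j}\approx\ell^{p_j}$, so transporting $\mathcal M$ through this linear isomorphism (which induces a Banach algebra isomorphism of the corresponding $\mathcal L(\cdot)$'s, carrying closed $(n+1)$-subideals to closed $(n+1)$-subideals and preserving failure of being an $n$-subideal, cf.\ Lemma \ref{qsubid} and the discussion around \eqref{che}) yields a closed subalgebra $\mathcal M\subset\mathcal S(X_n)$ with $\mathcal M\in\mathcal{SI}_{n+1}(\mathcal L(X_n))\setminus\mathcal{SI}_{n}(\mathcal L(X_n))$. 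The main obstacle I anticipate is the bookkeeping in part (i): one must argue cleanly that \emph{every} summand in the expanded $(n+1)$-fold matrix product is compact, handling uniformly the two reasons (a diagonal block appears, or the index chain is forced strictly decreasing and hence too long), and this is where writing out \eqref{productofoperators} iteratively and invoking \eqref{prop2}--\eqref{prop3} at each stage is essential.
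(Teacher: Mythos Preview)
Your overall strategy matches the paper (pigeonhole for (i), shift operator for (ii), Theorem~\ref{general} plus $X_n\oplus X_n\approx X_n$ for (iii)), but part (ii) contains a genuine error that would make your construction fail.

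You have the Pitt direction backwards. Pitt's theorem \eqref{prop2} says $\mathcal L(\ell^q,\ell^p)=\mathcal K(\ell^q,\ell^p)$ when $q>p$, so every bounded operator $\ell^{p_{j+1}}\to\ell^{p_j}$ is compact, while the formal inclusion $\ell^{p_j}\hookrightarrow\ell^{p_{j+1}}$ is bounded and \emph{not} compact. Thus your sentence ``each single inclusion $\ell^{p_j}\hookrightarrow\ell^{p_{j+1}}$ is compact by \eqref{prop2}'' is false, and your final operator $S=\sum J_{p_j}B_jP_{p_{j+1}}$ with $B_j\colon\ell^{p_{j+1}}\to\ell^{p_j}$ has every block $B_j$ compact by Pitt, so $S\in\mathcal K(X_n)$ and certainly $S^n\in\mathcal K(X_n)$. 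The paper instead sets $S=\sum_{k=1}^n J_{k+1}i_kP_k$ with $i_k\colon\ell^{p_k}\to\ell^{p_{k+1}}$ the (non-compact) inclusion; each $i_k$ is strictly singular by the total incomparability \eqref{prop1}, so $S\in\mathcal S(X_n)$, while $P_{n+1}S^nJ_1$ is the non-compact inclusion $\ell^{p_1}\to\ell^{p_{n+1}}$. Note also that these blocks sit on the \emph{super}diagonal (the $(k{+}1,k)$ entry), not the subdiagonal.

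Two smaller points. In (i), your parenthetical ``each off-diagonal block is itself compact by \eqref{prop2}'' is again wrong: only the blocks with $j\le k$ are compact, the strictly lower ones need not be. Your strictly-decreasing-chain argument is nonetheless correct once this is repaired, and it is equivalent to the paper's pigeonhole (two indices among $k_1,\ldots,k_{n+2}$ coincide, forcing a factor in $\mathcal S(\ell^{p_r})=\mathcal K(\ell^{p_r})$). For the reverse inclusion $\mathcal K(X_n)\subset\overline{\mathcal S(X_n)^{n+1}}$, ``inserting shift blocks'' is vague; the clean argument (used in the paper) is that $\overline{\mathcal S(X_n)^{n+1}}$ is a non-zero closed ideal of $\mathcal L(X_n)$ and $X_n$ has the A.P., so it contains $\mathcal K(X_n)=\mathcal A(X_n)$ by Remark~\ref{nonzero}.
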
 

\begin{proof}
(i)  We claim that the products 
\begin{equation}\label{prodn} 
S_{n+1} \cdots S_1 \in \mathcal K(X_n)
\end{equation}
whenever $S_j \in \mathcal S(X_n)$ for $j = 1,\ldots, n+1$. In particular, this will imply that  $\mathcal S(X_n)^{n+1} \subset \mathcal K(X_n)$, so 
 the  quotient algebra $\mathcal S(X_n)/ \mathcal K(X_n)$ is $(n+1)$-nilpotent. Moreover, 
$\overline{\mathcal S(X_n)^{n+1}} = \mathcal K(X_n)$ by Remark \ref{nonzero},
 since $\overline{\mathcal S(X_n)^{n+1}}$ is a non-zero closed ideal of $\mathcal L(X_n)$
and $X_n$ has the A.P.  

By iterating \eqref{productofoperators} we obtain that for any $j, k \in \{1,\ldots,n+1\}$ the component $[S_{n+1} \cdots S_1]_{jk}$ of the product in \eqref{prodn}  is a finite sum of terms of  the form
\[
U_{n+1} \cdots U_1,
\]
where $U_r \in \mathcal S(\ell^{k_r},\ell^{k_{r+1}})$ and $k_r, k_{r+1} \in \{p_1,\ldots,p_{n+1}\}$ for $r = 1, \ldots,n+1$.
Thus we pick  $k_1,\ldots, k_{n+2}$ from the set  $\{p_1,\ldots,p_{n+1}\}$ containing $n+1$ elements, so there are 
$r < s$ such that  $k_r = k_s$. Since the strictly singular operators $\mathcal S$ form an operator ideal we obtain that the partial product 
\[
U_{s-1} \cdots U_r \in \mathcal S(\ell^{k_r}) =  \mathcal K(\ell^{k_r}) 
\]
in view of  \eqref{prop3}. Thus  $U_{n+1} \cdots U_1 \in \mathcal K(\ell^{k_1},\ell^{k_{n+2}})$, so that \eqref{prodn} holds. 

\smallskip

(ii) Recall that $p_k < p_{k+1}$ for $k = 1,\ldots , n$, and let  $i_k$ be the corresponding inclusion map  $\ell^{p_k} \to \ell^{p_{k+1}}$. 
(If $p_{n+1} = \infty$, then $i_n$ is the inclusion map $\ell^{p_n} \to c_0$.) We define $S \in \mathcal L(X_n)$ by 
 \[
 S =  \sum_{k=1}^{n}J_{k+1}i_kP_k,
 \] 
 where $J_k$ and $P_k$ are the canonical inclusions, respectively projections, related to the component space $\ell^{p_k}$ of the 
  finite direct sum $X_n$  for  $k = 1,\ldots, n+1$. 
Property \eqref{prop1} implies that $S  \in \mathcal S(X_n)$. 
 The representation 
 \begin{equation}\label{092225}
 Sx = (0,i_1x_1,\ldots,i_nx_n), \quad x = (x_1,\ldots,x_{n+1}) \in X_n,
 \end{equation}
 of $S$ acting as a shift operator on the components of $X_n$  implies that 
 \[
 S^{n}x=(0,\ldots,0,i_n \cdots  i_1 x_1), \quad x\in X_n.
 \]
 Thus $P_{n+1}S^nJ_1$ is the non-compact inclusion  $\ell^{p_1} \to \ell^{p_{n+1}}$, so that
 \[
 S^n  \notin \mathcal K(X_n) = \overline{\mathcal S(X_n)^{n+1}} 
 \]
in view of  part (i). 
\smallskip

(iii) Let $Z_n = X_n \oplus X_n$.  By parts (i) - (ii) and Theorem \ref{general} there is a closed subalgebra $\mathcal M_0 \subset \mathcal S(Z_n)$ such that 
\[
\mathcal M_0 \in \mathcal{SI}_{n+1}(\mathcal L(Z_n)) \setminus \mathcal{SI}_{n}(\mathcal L(Z_n)).
\]
Recall that $X \oplus X \approx X$ for  the spaces $X$ from $\{\ell^p: 1 \le p < \infty\} \cup \{c_0\}$. Thus there is a linear isomorphism
$U: Z_n \to X_n$, which induces a Banach algebra isomorphism 
\[
T \mapsto \theta(T) = UTU^{-1}, \quad \mathcal L(Z_n) \to \mathcal L(X_n).
\]
Note that  $\theta(\mathcal S(Z_n)) = \mathcal S(X_n)$,  and it is easy to check that algebra isomorphisms preserve the $n$-subideal structure
(cf. Lemma \ref{qsubid} for $n = 2$). It follows that the closed subalgebra
$\mathcal M = \theta(\mathcal M_0) \subset \mathcal S(X_n)$ is the desired example associated to  the space $X_n$. 
(Note that $\mathcal M$ will depend on the choice of  $U$.)
\end{proof}

A central aim of this section is to construct an infinite decreasing sequence of non-trivial closed $n$-subideals in $\mathcal L(X)$, where $X$ is an infinite direct sum of $\ell^{p_j}$-spaces. 
This answers positively a query  of Niels Laustsen. We remark that it is significantly  easier to find 
a space $Z$ for which there are closed subideals $\mathcal K_n \in \mathcal{SI}_{n+1}(\mathcal L(Z)) \setminus \mathcal{SI}_{n}(\mathcal L(Z))$
for each $n \in \mathbb N$, see the proof of part (ii) of Theorem \ref{nsubid}. In that construction  $(\mathcal K_n)_{n \in \mathbb N}$ will not be a chain.

Let $(p_j)_{j\in \mathbb N}$ be any strictly increasing sequence in $[1,\infty)$.
We put $E = \ell^r$ for $r$ satisfying $\sup_j p_j \le r \le \infty$, where the notational convention is again that $\ell^\infty = c_0$.
The desired Banach space will be the $E$-direct sum 
\begin{equation}\label{direct}
X = \big(\bigoplus_{j=1}^\infty \ell^{p_j}\big)_E  \ .
\end{equation}
We will find a decreasing chain $(\mathcal M_n)$ of closed $(n+1)$-subideals of $\mathcal L(X)$ such that \[\mathcal K(X)\subset\mathcal M_n\subset \mathcal S(X)\] for all $n\in\mathbb N$. Such an infinite chain lies outside the scope of Theorem \ref{general} 
as well as the subsequent Proposition \ref{lemma}. In particular, the quotient algebra $\mathcal S(X)/\mathcal K(X)$ is non-nilpotent in view of the finite shift operators from \eqref{092225}. This obstruction creates serious technical difficulties, which are resolved in the proofs of \eqref{claim1} and \eqref{claim2}.

 \begin{theorem}\label{nsubidchain}
 Let $X$ be the space in  \eqref{direct}. Then there is a strictly decreasing chain $(\mathcal M_n)_{n\in \mathbb N_0}$ of closed subalgebras of $\mathcal L(X)$ such that 
 $\mathcal M_0 = \mathcal S(X)$, and 
 $\mathcal M_n  \subset \mathcal M_{n-1}$ is a closed ideal for all $n \in \mathbb N$, so that 
 \[
\mathcal M_n \in \mathcal{SI}_{n+1}(\mathcal L(X)) \setminus \mathcal{SI}_{n}(\mathcal L(X)) 
 \]
 for all $n \in \mathbb N$. 
\end{theorem}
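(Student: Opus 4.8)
The plan is to carry out, inside $\mathcal S(X)$ and simultaneously for all $n$, a stratified analogue of the construction behind Theorem \ref{general}. Write $T=[T_{jk}]$ for the operator matrix of $T\in\mathcal L(X)$ relative to $X=\big(\bigoplus_{j=1}^\infty\ell^{p_j}\big)_E$, so $T_{jk}=P_jTJ_k\in\mathcal L(\ell^{p_k},\ell^{p_j})$. By Pitt's theorem and the total incomparability of the $\ell^{p_j}$'s (that is, \eqref{prop1}--\eqref{prop3}) the ``lower band'' phenomenon of Theorem \ref{nsubids2} persists: for $m\ge1$ put
\[
\mathcal D_m=\{T\in\mathcal L(X):T_{jk}\in\mathcal K(\ell^{p_k},\ell^{p_j})\ \text{whenever}\ j-k<m\},\qquad \mathcal E_m=\mathcal D_m\cap\mathcal S(X).
\]
One checks that each $\mathcal D_m$, and hence each $\mathcal E_m$, is a closed two-sided ideal of $\mathcal L(X)$ --- in a product $[TU]_{jk}=\sum_l T_{jl}U_{lk}$ the only terms that can be non-compact keep the band at depth $\ge m$, the ``backward'' ones being compact by Pitt --- that $\mathcal K(X)\subset\mathcal E_{m+1}\subset\mathcal E_m$, and that $\mathcal E_1=\mathcal S(X)$. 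For $N\ge1$ let $S_N=\sum_{k=1}^N J_{k+1}i_kP_k$, with $i_k\colon\ell^{p_k}\hookrightarrow\ell^{p_{k+1}}$; a short computation gives $S_N^mS_{N'}^{m'}=S_{\min(N,\,N'+m)}^{\,m+m'}$, so each $S_N^m$ is again a finite shift, $S_N^m\in\mathcal S(X)$ by \eqref{prop1} and the ideal property of $\mathcal S$, and $S_N^m$ has all its non-compact blocks at depth exactly $m$, so $S_N^m\in\mathcal E_m$. Finally put $\mathcal M_0=\mathcal S(X)$ and, for $n\ge1$,
\[
\mathcal M_n=\overline{\operatorname{span}\{S_N^m:N\ge1,\ 1\le m\le n\}+\mathcal E_{n+1}}\ \subset\ \mathcal S(X).
\]

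The chain structure is then a routine check. Using the product formula for finite shifts, the ideal property of $\mathcal D_{n+1}$, and the fact that $S_N^m\cdot E$ and $E\cdot S_N^m$ lie in $\mathcal E_{n+1}$ whenever $E\in\mathcal E_n$ and $m\ge1$ (an $m$-fold shift pushes a band of depth $\ge n$ to depth $\ge n+m\ge n+1$), one obtains that each $\mathcal M_n$ is a closed subalgebra of $\mathcal S(X)$ with $\mathcal K(X)\subset\mathcal M_n$, that $\mathcal M_n\subset\mathcal M_{n-1}$ (since $\operatorname{span}\{S_N^n:N\}\subset\mathcal E_n$ and $\mathcal E_{n+1}\subset\mathcal E_n$), and that $\mathcal M_n$ is a closed ideal of $\mathcal M_{n-1}$ for every $n\ge1$, while $\mathcal S(X)$ is a closed ideal of $\mathcal L(X)$. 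Hence
\[
\mathcal M_n=\mathcal J_{n+1}\subset\mathcal J_n=\mathcal M_{n-1}\subset\cdots\subset\mathcal J_1=\mathcal M_0=\mathcal S(X)\subset\mathcal J_0=\mathcal L(X)
\]
is a chain of closed relative ideals, so $\mathcal M_n\in\mathcal{SI}_{n+1}(\mathcal L(X))$ for every $n\in\mathbb N$.

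The substance of the statement is that $\mathcal M_n\notin\mathcal{SI}_n(\mathcal L(X))$, and by Lemma \ref{newlemma} it suffices to find $t\in\mathcal M_n$ and $s\in\mathcal L(X)$ with $s\,t^n\notin\mathcal M_n$. I take $t=S_n\in\mathcal M_n$, so $t^n=S_n^n=J_{n+1}\iota_nP_1$ with $\iota_n:=i_n\cdots i_1\colon\ell^{p_1}\hookrightarrow\ell^{p_{n+1}}$, and $s=J_{n+1}DP_{n+1}$, where $D$ is the diagonal operator on $\ell^{p_{n+1}}$ with entries $(-1)^k$; thus $s\,S_n^n=J_{n+1}(D\iota_n)P_1$. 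I claim this escapes $\mathcal M_n$: among the generators of $\mathcal M_n$ only the $S_N^n$ with $N\ge n$ have a nonzero $(n+1,1)$-block, each equal to $\iota_n$, and $\mathcal E_{n+1}\subset\mathcal D_{n+1}$ contributes only compact $(n+1,1)$-blocks; since $T\mapsto P_{n+1}TJ_1$ is norm continuous and $\C\,\iota_n+\mathcal K(\ell^{p_1},\ell^{p_{n+1}})$ is closed, the $(n+1,1)$-block of every element of $\mathcal M_n$ has the form $c\,\iota_n+K$ with $c$ a scalar and $K$ compact. But $D\iota_n$ is the diagonal inclusion $e_k\mapsto(-1)^ke_k$, $\ell^{p_1}\to\ell^{p_{n+1}}$, and such a diagonal inclusion with entries $(\lambda_k)$ is compact precisely when $\lambda_k\to0$, so $D\iota_n-c\,\iota_n$ is non-compact for every scalar $c$. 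Hence $s\,S_n^n\notin\mathcal M_n$, and $\mathcal M_n\notin\mathcal{SI}_n(\mathcal L(X))$. The same operator $J_{n+1}(D\iota_n)P_1$ is strictly singular of depth $n$, hence lies in $\mathcal E_n\subset\mathcal M_{n-1}$, so it also witnesses $\mathcal M_n\varsubsetneq\mathcal M_{n-1}$ and the chain is strictly decreasing.

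The main obstacle is precisely the non-nilpotency of $\mathcal S(X)/\mathcal K(X)$ flagged before the statement: there is no clean analogue of ``$\overline{\mathcal S(X)^{n+1}}=\mathcal K(X)$'' to pass to a quotient, so the proof of Theorem \ref{general} cannot be transplanted verbatim and $\mathcal M_n$ must be controlled band by band. The two delicate claims I would isolate in the proof are: first, that $\operatorname{span}\{S_N^m:m\le n\}+\mathcal E_{n+1}$ is multiplicatively absorbed by $\mathcal M_{n-1}$, which hinges on the exact arithmetic $S_N^mS_{N'}^{m'}=S_{\min(N,N'+m)}^{m+m'}$ together with the ideal properties of the $\mathcal D_m$ and of $\mathcal S$; and second, the block-rigidity above, namely that the norm closure does not enlarge the admissible $(n+1,1)$-blocks beyond $\C\,\iota_n+\mathcal K$, so that a ``twisted'' inclusion such as $D\iota_n$ cannot occur in $\mathcal M_n$. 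One must also verify that all finite shifts $S_N^m$ are strictly singular on $X$ --- in contrast with the full shift $\sum_k J_{k+1}i_kP_k$, which restricts to an isometry on the natural $\ell^r$-subspace of $X$ and is not strictly singular --- so that indeed $\mathcal M_n\subset\mathcal S(X)$; this is where \eqref{prop1}--\eqref{prop3} and the hypothesis $\sup_jp_j\le r$ are used.
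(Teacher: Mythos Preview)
Your approach is correct and genuinely different from the paper's. The paper builds $\mathcal M_n=\overline{\mathcal A+\mathcal S(X)^{n+1}}$, where $\mathcal A$ is generated by operators $V_k$ that encode the $X_k\oplus X_k$ doubling trick of Theorem~\ref{general}; the hard step is then claim~\eqref{claim2}, showing $P_nAJ_n\in\mathcal K(X_n)$ for every $A\in\mathcal S(X)^{n+1}$, which is done by an iterated approximation using \cite[Corollary~5.2]{L01}. You bypass the powers $\mathcal S(X)^{n+1}$ entirely by introducing the explicit band ideals $\mathcal E_m=\{T\in\mathcal S(X):T_{jk}\in\mathcal K\ \text{for}\ j-k<m\}$ and finite shifts $S_N$, and your obstruction --- that every $(n+1,1)$-block in $\mathcal M_n$ lies in $\mathbb K\,\iota_n+\mathcal K$, while the twisted inclusion $D\iota_n$ does not --- is a clean replacement for the $2\times2$ matrix argument of Theorem~\ref{general}. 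Your route is more transparent and avoids the paper's most technical lemma; the paper's route stays closer to the template of Theorem~\ref{general} and makes the analogy with $\overline{\mathcal I^{n+1}}$ explicit. Both proofs, however, rest on the same external ingredient: the norm-convergence formula \cite[Corollary~5.2]{L01} (valid here because $\mathcal L(E,\ell^{p_j})=\mathcal K(E,\ell^{p_j})$ by Pitt). The paper cites it for~\eqref{claim2}; you need it --- but do not cite it --- to conclude that $\mathcal D_m$ is a two-sided ideal of $\mathcal L(X)$, since the termwise analysis of $[TU]_{jk}=\sum_l T_{jl}U_{lk}$ only shows each summand is compact, and a merely strong limit of compact operators need not be compact. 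You should make this dependence explicit.
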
 

\begin{proof}
 For each $n \in \mathbb N$ we consider the finite $E$-direct sum 
 \[
 X_n=\ell^{p_1}\oplus\cdots\oplus\ell^{p_{n+1}}.
 \]
 We use  $J_n:X_n\to X$ and $P_n:X\to X_n$ for the natural inclusions and the projections
 related to $X_n$.  
 
According to the proof of part (ii) of Theorem \ref{nsubids2}
there is   $S_n\in \mathcal S(X_n)$ such that $S_n^{n}\notin \mathcal K(X_n)$. 
Next, for each $n\in \mathbb N$ let $\theta_n:X_n\to X_n\oplus X_n$ be the linear isomorphism defined by 
\[
\theta_n(x_1,\ldots,x_{n+1}) := (\phi_1(x_1)_1,\ldots, \phi_{n+1}(x_{n+1})_1)+(\phi_1(x_1)_2,\ldots,\phi_{n+1}(x_{n+1})_2),
\]
where $\phi_k:\ell^{p_k}\to \ell^{p_k}\oplus \ell^{p_k}$, $x\mapsto (\phi_k(x)_1,\phi_k(x)_2)$ for $x \in \ell^{p_k}$ is a fixed linear  isomorphism
for each $k \in \mathbb N$.

We define  
\[
V_k=J_k\theta_k^{-1}\begin{bmatrix}
S_k&0\\0&0
\end{bmatrix}\theta_k P_k\in \mathcal S(X)
\]
for  $k \in \mathbb N$, see the diagram.
 \[
\xymatrix{X \ar[rr]^{V_k} \ar[d]_{P_k}  && X\\
X_k \ar[d]_{\theta_k} & & X_k \ar[u]_{J_k}\\
X_k\oplus X_k \ar[rr]^{\begin{bmatrix}
S_k&0\\0&0
\end{bmatrix}} && X_k\oplus X_k\ar[u]_{\theta_{k}^{-1}}
}
\]
Let 
\begin{equation}\label{092325}
\mathcal  A = \mathcal  A(\{V_k: k \in \mathbb N\})
\end{equation}
be the subalgebra of $\mathcal S(X)$ generated by the set $\{V_k: k \in \mathbb N\}$.
Finally, for each $n\in\mathbb N_0$ we put 
\begin{equation}\label{chain}
\mathcal  M_n
=\overline{\mathcal  A +  \mathcal S(X)^{n+1}}.
\end{equation}

We first verify that 
\[\mathcal  K(X)\subset\cdots\subset \mathcal   M_{n+1}\subset \mathcal  M_n\subset\cdots\subset \mathcal  M_0 = \mathcal  S(X),
\]
where  $\mathcal  M_{n}$ is a closed ideal of $\mathcal  M_{n-1}$ for  $n \in \mathbb N$. 
These facts will imply  that $\mathcal  M_n \in \mathcal{SI}_{n+1}(\mathcal L(X))$
for $n \in \mathbb N$.

Clearly $\mathcal  M_0 =\overline{\mathcal  A +  \mathcal S(X)} = \mathcal S(X)$, and $\mathcal  K(X)\subset \mathcal   M_n$ for $n \in \mathbb N$, since
$\mathcal S(X)^{n+1}$ is a non-zero ideal of $\mathcal L(X)$ and $X$ has the A.P. 
The inclusion $\mathcal S(X)^{n+1} \subset \mathcal S(X)^n$ yields that 
$\mathcal  M_{n} \subset \mathcal  M_{n-1}$ for $n \in \mathbb N$.
Suppose next that 
\[
U = A + T_1 \in \mathcal A + \mathcal S(X)^{n+1}, \quad V = B + T_2 \in \mathcal A + \mathcal S(X)^n
\]
are arbitrary,  with $A, B \in \mathcal A$, $T_1 \in \mathcal S(X)^{n+1}$ and $T_2 \in \mathcal S(X)^n$. Hence 
\[
UV = AB + AT_2 + T_1B + T_1T_2 \in \mathcal A + \mathcal S(X)^{n+1} \subset \mathcal  M_{n},
\]
since $AB \in \mathcal A$ and $AT_2 + T_1B + T_1T_2 \in \mathcal S(X)^{n+1}$. Similarly the product 
$VU \in  \mathcal A + \mathcal S(X)^{n+1}$. Thus we obtain  by approximation and  the continuity of the product  that
 $\mathcal  M_{n}$ is a closed ideal of $\mathcal  M_{n-1}$.
 
 \smallskip
 
 The main step of the argument consists of establishing the following 
 
 \smallskip
 
 \textit{Claim}. \textit{$\mathcal  M_n$ is not a closed  $n$-subideal of $\mathcal L(X)$}.
 
 \smallskip

Assume towards a contradiction that $\mathcal M_n\in\mathcal{SI}_n(\mathcal L(X))$. By Lemma \ref{newlemma} we have
\[
J_n\theta_n^{-1}\begin{bmatrix}
0&0\\S_n^n&0
\end{bmatrix}\theta_n P_n=\big(J_n\theta_n^{-1}\begin{bmatrix}
0&0\\ I_{X_n}&0
\end{bmatrix}\theta_n P_n \big)V_n^n\in \mathcal M_n
\]
as $V_n\in \mathcal M_n$. Above we applied  the identity 
$V_n^n = J_n\theta^{-1}_n\begin{bmatrix}
S_n^n&0\\0&0
\end{bmatrix}\theta_n P_n$.

Hence there are sequences $(T_r)\subset \mathcal A$ and $(W_r)\subset \mathcal S(X)^{n+1}$ such that 
\[
T_r+W_r\to J_n\theta_n^{-1}\begin{bmatrix}
0&0\\S_n^n&0
\end{bmatrix}\theta_n P_n
\] 
in $\mathcal S(X)$ as $r\to\infty$. 
It follows that 
\begin{equation}\label{contrad}
\theta_n P_n T_r J_n\theta_n^{-1}+\theta_n P_n W_r J_n\theta_n^{-1}\to \begin{bmatrix}
0&0\\S_n^n&0
\end{bmatrix}\end{equation}
in $\mathcal S(X_n \oplus X_n)$  as $r\to\infty$.

We will  establish that for each $n \in \mathbb N$  and $T\in \mathcal A$  there is an operator $U=U(T,n)\in \mathcal S(X_n)$ such that
\begin{equation}\label{claim1}
\theta_n P_n T J_n\theta_n^{-1}=\begin{bmatrix}
U&0\\0&0
\end{bmatrix}
\end{equation}
and that
\begin{equation}\label{claim2}
P_n A J_n\in \mathcal K(X_n)
\end{equation}
holds for all $A\in \mathcal S(X)^{n+1}$. Conditions \eqref{claim1} and \eqref{claim2} will then imply that 
\eqref{contrad} takes  the form
\[\begin{bmatrix}
U_r&0\\0&0
\end{bmatrix}+K_r\to\begin{bmatrix}
0&0\\S_n^n&0
\end{bmatrix} \textrm{ as } r \to \infty,
\]
where $U_r \in \mathcal S(X_n)$ and $K_r = \theta_n P_n W_r J_n\theta_n^{-1} \in \mathcal K(X_n\oplus X_n)$ for $r \in \mathbb N$.
In particular, above the operator norm convergence  as $r\to\infty$  of  the respective $(2,1)$-components  
yields that $S_n^n\in \mathcal K(X_n)$, which  contradicts  our original choice of $S_n$.
In the rest of the argument  we complete the proof  by showing that \eqref{claim1} and \eqref{claim2} hold.

\bigskip

\textit{Proof of \eqref{claim1}}. 

By definition \eqref{092325} and  linearity it is enough verify \eqref{claim1} for operators $T\in\mathcal A$ which have the form $T=V_{n_1}\cdots V_{n_k}$ where $n_1,\ldots, n_k\in\mathbb N$. 

Note that the inverses of $\theta_n$ satisfy
\[
\theta_n^{-1}:(x_1,\ldots, x_{n+1})+(y_1,\ldots,y_{n+1})\mapsto (\phi_1^{-1}(x_1,y_1),\ldots,\phi_{n+1}^{-1}(x_{n+1},y_{n+1}))
\]
for $(x_1,\ldots,x_{n+1}), (y_1,\ldots,y_{n+1}) \in X_n$.
For $t\ge s$ we let  $J_{t,s}:X_s\to X_t$ and $P_{s,t}:X_t\to X_s$ be the natural inclusion and projection maps. The following identities are straightforward to check:
\begin{align*}
J_s\theta_s^{-1} &=J_t\theta_t^{-1}(J_{t,s}\oplus J_{t,s}),\\
\theta_s P_s& =(P_{s,t}\oplus P_{s,t})\theta_t P_t
\end{align*}
and 
\[
(J_{t,s}\oplus J_{t,s})\begin{bmatrix}
S_s&0\\0&0
\end{bmatrix}(P_{s,t}\oplus P_{s,t})
 =\begin{bmatrix}
J_{t,s}S_sP_{s,t}&0\\0&0
\end{bmatrix}.
\]
Here $(P_{s,t}\oplus P_{s,t})(x,y) = (P_{s,t}x,P_{s,t}y)$ and $J_{t,s}\oplus J_{t,s}$ is defined in a similar manner.
By using these identities we obtain that
\begin{align*}
V_s&=J_s\theta^{-1}_s 
\begin{bmatrix}
S_s&0\\0&0
\end{bmatrix}\theta_s P_s\\
&=J_t\theta_t^{-1}(J_{t,s}\oplus J_{t,s})\begin{bmatrix}
S_s&0\\0&0
\end{bmatrix}(P_{s,t}\oplus P_{s,t})\theta_t P_t
\\
&=J_t\theta_t^{-1}\begin{bmatrix}
J_{t,s}S_sP_{s,t}&0\\0&0
\end{bmatrix}\theta_t P_t,
\end{align*}
where $J_{t,s}S_sP_{s,t} \in \mathcal S(X_t)$.

It follows that if $t\ge max\{r,s\}$, then both the products $V_r V_s$ and $V_sV_r$ have the form
\begin{equation}\label{te}
J_t\theta_t^{-1}\begin{bmatrix}
U&0\\0&0
\end{bmatrix}\theta_t P_t\end{equation}
for some $U\in \mathcal S(X_t)$. By iteration, any finite product $V_{n_1}\cdots V_{n_k}$ has the form \eqref{te} if $t\ge max\{n_1,\ldots, n_k\}$. 

Suppose next that $T=V_{n_1}\cdots V_{n_k}$,  where $n_1,\ldots ,n_k \in \mathbb N$ are arbitrary. If $max\{n_1,\ldots,n_k\}\le n$, then  according to \eqref{te}
we have 
\[
T=J_n\theta_n^{-1}\begin{bmatrix}
U&0\\0&0
\end{bmatrix}\theta_{n} P_n
\]
for some $U\in \mathcal S(X_n)$, so 
$\theta_n P_n T J_n \theta_n^{-1}=\begin{bmatrix}
U&0\\0&0
\end{bmatrix}$ 
has the desired form \eqref{claim1}. 

On the other hand, if $s:=max\{n_1,\ldots,n_k\}>n$, then 
\[
T=J_s\theta_s^{-1}\begin{bmatrix}
U&0\\0&0
\end{bmatrix}\theta_s P_s\]
for some $U\in \mathcal S(X_s)$ by \eqref{te}. By using the simple identities
\[
\theta_s P_sJ_n\theta_n^{-1}=J_{s,n}\oplus J_{s,n}, \quad  \theta_n P_n J_s\theta_s^{-1}=P_{n,s}\oplus P_{n,s},
\]
we get that
\begin{align*}
\theta_n P_n T J_n\theta_n^{-1} & =\theta_n P_n (J_s\theta_s^{-1}\begin{bmatrix}
U&0\\0&0
\end{bmatrix}\theta_s P_s) J_n\theta_n^{-1}\\ 
& =(P_{n,s}\oplus P_{n,s})\begin{bmatrix}
U&0\\0&0
\end{bmatrix}(J_{s,n}\oplus J_{s,n}) \\
& = \begin{bmatrix}
P_{n,s}UJ_{s,n}&0\\0&0
\end{bmatrix}.
\end{align*}
Thus we also obtain the desired identity  \eqref{claim1} in the second  case since $P_{n,s}UJ_{s,n}\in \mathcal S(X_n)$.

\smallskip

\textit{Proof of \eqref{claim2}}.  We claim that   $P_n A J_n\in \mathcal K(X_n)$ for all $A\in \mathcal S(X)^{n+1}$.

\smallskip

By linearity we may assume that $A=A_1\cdots A_{n+1}$, where $A_i\in \mathcal S(X)$ for $i \in \{1,\ldots,n+1\}$.
Let $Q_k:X\to \ell^{p_k}$ and $q_k:X_n\to \ell^{p_k}$ be the natural projections, 
and $R_k:\ell^{p_k}\to X$ and $r_k:\ell^{p_k}\to X_n$ be the natural inclusion maps for $1 \le k \le n+1$. 
Thus $P_n=r_1Q_1+\ldots+r_{n+1}Q_{n+1}$ and $J_n=R_1q_1+\ldots +R_{n+1}q_{n+1}$,  so that
\[
P_n A J_n=\sum_{s,t =1}^{n+1}  r_t Q_t A R_s q_s .
\]
In order to verify \eqref{claim2} it will suffice to show that 
\begin{equation}\label{enoughnew}
 Q_t A R_s=Q_t A_1\cdots A_{n+1} R_s\in \mathcal K(\ell^{p_s},\ell^{p_t})
\end{equation} 
holds for all $1\le s, t \le n+1$. Note that \eqref{enoughnew} holds for $s\ge t$ by \eqref{prop1} and \eqref{prop2}, and thus we may assume that $1\le s<t\le n+1$. 

We will apply the following result from  \cite[Corollary 5.2]{L01} in order to verify \eqref{enoughnew}: 
If $U,V\in \mathcal  L(X)$ and $s, t \in\mathbb N$ then
\begin{equation}\label{matrixmult.new}
Q_tUVR_s =\sum_{k=1}^\infty Q_tUR_kQ_kVR_s\ , 
\end{equation}
where the series converges in the operator norm of $\mathcal L(\ell^{p_s},\ell^{p_t})$. 
Note that \cite[Corollary 5.2]{L01} (see also condition (ii) on \cite[page 166]{L01}) applies here since
$\mathcal  L(E,\ell^{p_j})= \mathcal  K(E,\ell^{p_j})$ for all $j\in\mathbb N$ by Pitt's theorem \eqref{prop2} for $E = c_0$ or $E = \ell^r$ with $r \ge \sup_j p_j$. 

Let $U_m=\sum_{k=1}^m R_kQ_k$ for all $m\in\mathbb N$, and define for all $r=1,\ldots,n$ and $m_1,\ldots,m_r\in\mathbb N$ the auxiliary operators
\[B_{(m_1,\ldots,m_r)}:=Q_t A_1U_{m_1}\cdots A_rU_{m_r}A_{r+1}A_{r+2}\cdots A_{n+1}R_s.\]
We first claim that for all $\varepsilon>0$ there are $m_1,\ldots, m_n\in\mathbb N$ such that
\begin{equation}\label{0922252}
||B_{(m_1,\ldots,m_n)}-T||<\varepsilon
\end{equation}
where $T=Q_t A_1\cdots A_{n+1}R_s$ is the operator in \eqref{enoughnew}. 

To see this let $\varepsilon>0$. By \eqref{matrixmult.new} (with $U=A_1$ and $V=A_2\cdots A_{n+1}$) there is $m_1\in\mathbb N$ such that $||B_{(m_1)}-T||<\varepsilon/n$. If $n\ge 2$, we continue by picking $m_2\in\mathbb N$ such that \[||B_{(m_1)}-B_{(m_1,m_2)}||<\varepsilon/n\] (here we again use \eqref{matrixmult.new} with $U=A_1 U_{m_1}A_2$ and $V=A_3\cdots A_{n+1}$). Assume that $2\le r\le n-1$ and we have picked $m_1,\ldots,m_r\in\mathbb N$ such that \[||B_{(m_1,\ldots,m_{k-1})}-B_{(m_1,\ldots,m_k)}||<\varepsilon/n\] 
for $k=2,\ldots,r$.
Next we use \eqref{matrixmult.new} (with $U=A_1U_{m_1}A_2U_{m_2}\cdots A_rU_{m_r}A_{r+1}$ and $V=A_{r+2}\cdots A_{n+1}$) to pick $m_{r+1}\in\mathbb N$ such that
\[||B_{(m_1,\ldots,m_r)}-B_{(m_1,\ldots,m_{r+1})}||<\varepsilon/n.\]
Hence we get \eqref{0922252} from the triangle inequality.

Next we note that
\begin{align*}
B_{(m_1,\ldots,m_n)}&=Q_t A_1U_{m_1}A_2U_{m_2}\cdots A_nU_{m_n} A_{n+1}R_s\\
&=\sum_{k_1=1}^{m_1}\cdots\sum_{k_n=1}^{m_n}Q_t A_1R_{k_1}Q_{k_1}A_2R_{k_2}Q_{k_2}\cdots A_nR_{k_n}Q_{k_n}A_{n+1}R_s.
\end{align*}
In order to show \eqref{enoughnew} it will be enough in view of \eqref{0922252} to verify that
\begin{equation}\label{2109}
Q_t A_1 R_{k_1}Q_{k_1}A_2 R_{k_2}Q_{k_2}\cdots A_n R_{k_n}Q_{k_n}A_{n+1} R_s\in\mathcal K(\ell^{p_s},\ell^{p_t})\end{equation}
holds for all  $k_1,\ldots,k_n\in\mathbb N$. 

If $k_r\ge t$ or $k_r\le s$ for some $r=1,\ldots,n$, then \eqref{prop2} - \eqref{prop3}  imply that \eqref{2109} holds. In fact, if $k_r\ge t$, then 
\[Q_tA_1R_{k_1}Q_{k_1}\cdots A_rR_{k_r}\in\mathcal K(\ell^{p_{k_r}},\ell^{p_t})\]
and thus \eqref{2109} holds. The case $k_r\le s$ follows similarly by observing that
\[Q_{k_r}A_{r}R_{k_{r}}Q_{k_{r}}\cdots A_{n+1}R_s\in\mathcal K(\ell^{p_s},\ell^{p_{k_r}}).\]
 On the other hand, if $s<k_r<t$ for all $r=1,\ldots,n$, we must have $k_l=k_m$ for some $1\le l<m\le n$ since $\#(\{s+1,\ldots,t-1\})<n$ (recall that $1\le s<t\le n+1$). By \eqref{prop3} we have
\[Q_{k_l}A_{l+1}R_{k_{l+1}}Q_{k_{l+1}}\cdots A_{m}R_{k_m}\in\mathcal K(\ell^{p_{k_l}})\]
which again implies \eqref{2109}.

As pointed out above this establishes the second claim \eqref{claim2}. 
Altogether we have completed the argument that 
$\mathcal M_n \notin \mathcal{SI}_{n}(\mathcal L(X))$, and consequently  that of Theorem \ref{nsubidchain}.
\end{proof}

\begin{remark}
The result of Theorem \ref{nsubidchain} also holds for the  direct $\ell^r$-sum 
\[
Y = \big(\bigoplus_{j=1}^\infty \ell^{p_j}\big)_{\ell^r} ,
\]
where $(p_j)$ is any strictly increasing sequence in $(1,\infty)$ and $1 \le r < p_1$.
In this case condition (i) from  \cite[page 166]{L01}  is satisfied by 
\eqref{prop2}, so that  the convergence formula \eqref{matrixmult.new} from \cite[Corollary 5.2]{L01} is also valid in $\mathcal L(Y)$.
\end{remark}
Next we return to the setting of Theorem \ref{general} and include a variant of Theorem \ref{nsubids2} for finite sums of different $p$-James spaces. The underlying spaces are quasi-reflexive 
and the reference ideals $\mathcal I$ in the application of 
Theorem \ref{general} can  be chosen differently from Theorem \ref{nsubids2}. (See  Remarks \ref{jamesvers} for further comments.)

Let $1 < p < \infty$. The $p$-James space  $J_p$ consists of the scalar-valued sequences $x = (x_j)$ for which $\lim_{j\to\infty} x_j = 0$ and 
the $p$-variation norm 
\begin{equation}\label{jp}
\Vert x \Vert_{J_{p}} = \sup \{ \big(\sum_{k=1}^n \vert x_{j_{k+1}} - x_{j_{k}}\vert^p\big)^{1/p}: 1 \le j_1 < \cdots < j_{n+1}, n \in \mathbb N\}
\end{equation}
is finite.  Here $J_p^{**}$ is identified with the space of scalar sequences $x = (x_j)$ for which $\Vert x \Vert_{J_{p}} < \infty$. In fact, $J_p^{**} = J_p \oplus \mathbb K e$, 
where  $e = (1,1,1,\dots) \in J_p^{**}$.

Let  $n \in \mathbb N$ and $1 <  p_1<p_2<\cdots<p_{n+1} <  \infty$. We will consider the max-normed direct sums  
\begin{equation}\label{elem2}
Y_n = \bigoplus_{j=1}^{n+1} J_{p_{j}}. 
\end{equation}

Let $\mathcal W(X,Y)$ denote the class of weakly compact operators $X \to Y$ for Banach spaces
$X$ and $Y$. The class  $\mathcal W$ is a closed Banach operator ideal, so that $\mathcal W(X)$ is a closed ideal of $\mathcal L(X)$ for any $X$.

We  will require the following properties of the spaces $J_p$ and their bounded operators:  
\smallskip

(i) For $1<p<\infty$ we have $\mathcal L(J_p) =  span(I_{J_p}) + \mathcal W(J_p)$ and
\begin{equation}\label{james1}
 \mathcal S(J_p) = \mathcal K(J_p) \varsubsetneq \mathcal W(J_p),
\end{equation}
see \cite[p. 344]{LW89} and \cite[Proposition 4.9]{L02}. Moreover, there is a complemented subspace $R_p \subset J_p$ such that
\begin{equation}\label{james5}
R_p \approx \ell^p,
\end{equation}
see e.g.  \cite[Corollary 4.7]{L02}.
\smallskip

(ii) For $1 < p < q < \infty$  the inclusion map $i_{p,q}: J_p \to J_q$ is not weakly compact and 
\begin{equation}\label{james2}
\mathcal L(J_p,J_q)= span(i_{p,q}) +  \mathcal W(J_p,J_q),
\end{equation}
see  \cite[p. 344]{LW89}. In addition, 
\begin{equation}\label{james4}
\mathcal L(J_p,J_q) = \mathcal S(J_p,J_q)
\end{equation} since the spaces $J_p$ and $J_q$ are totally incomparable, which follows from \cite[Corollary 4.7]{L02}.
\smallskip

(iii) For $1 < q < p < \infty$ we have 
\begin{equation}\label{james3}
\mathcal L(J_p,J_q) = \mathcal K(J_p,J_q),
\end{equation}
see \cite[Theorem 4.5]{LW89}. 

Let $Z$ be a Banach space. In the following results we use the standard notation $\mathcal I(Z)$ for the closed ideal of $\mathcal L(Z)$ consisting of the operators $Z\to Z$ which belong to the closed Banach operator ideal $\mathcal I=\mathcal S\cap\mathcal W$ or $\mathcal I=\mathcal S$.

 \begin{theorem}\label{nsubids3}
Let  $n \in \mathbb N$ and $Y_n$  be the direct sum space from \eqref{elem2}, where $1 <  p_1< \cdots<p_{n+1} <   \infty$. If
$\mathcal I(Y_n) =  \mathcal S(Y_n) \cap \mathcal W(Y_n)$ or $\mathcal I(Y_n) =  \mathcal S(Y_n)$, then
\begin{enumerate}
\item[(i)]  the quotient algebra $\mathcal I(Y_n)/ \mathcal K(Y_n)$ is $(n+1)$-nilpotent, and 
\[
\overline{\mathcal I(Y_n)^{n+1}} = \mathcal K(Y_n),
\]

\smallskip

\item[(ii)] there is  $T \in \mathcal I(Y_n)$ for which $T^n  \notin \mathcal K(Y_n)$, 

\smallskip

\item[(iii)]  for $Z_n = Y_n \oplus Y_n$ there is a closed subalgebra  $\mathcal M \subset \mathcal I(Z_n)$
such that 
\[
\mathcal M \in \mathcal{SI}_{n+1}(\mathcal L(Z_n)) \setminus \mathcal{SI}_{n}(\mathcal L(Z_n)).
\]
\end{enumerate}
\end{theorem}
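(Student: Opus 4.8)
The plan is to follow closely the scheme of the proof of Theorem~\ref{nsubids2}, with the facts \eqref{prop1}--\eqref{prop3} about $\ell^p$-spaces replaced by the corresponding properties \eqref{james1}--\eqref{james5} of the $p$-James spaces, and then to apply Theorem~\ref{general}. For part~(i), I would first note that if $S=[S_{jk}]\in\mathcal I(Y_n)$ then $S_{jk}\in\mathcal I(J_{p_k},J_{p_j})$ for all $j,k$, and that in both cases $\mathcal I=\mathcal S$ and $\mathcal I=\mathcal S\cap\mathcal W$ one has $\mathcal I(J_{p_j})\subset\mathcal S(J_{p_j})=\mathcal K(J_{p_j})$ by \eqref{james1}. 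Then, exactly as in the proof of Theorem~\ref{nsubids2}(i), iterating \eqref{productofoperators} expresses each matrix entry of a product $S_{n+1}\cdots S_1$ with all $S_r\in\mathcal I(Y_n)$ as a finite sum of compositions passing through a chain of $n+2$ coordinate spaces taken from the $n+1$ spaces $J_{p_1},\ldots,J_{p_{n+1}}$; by the pigeonhole principle some $J_{p_a}$ is revisited, the corresponding partial composition lies in $\mathcal I(J_{p_a})=\mathcal K(J_{p_a})$, and hence the whole term is compact. This gives $\mathcal I(Y_n)^{n+1}\subset\mathcal K(Y_n)$, so $\mathcal I(Y_n)/\mathcal K(Y_n)$ is $(n+1)$-nilpotent; and since $Y_n$ has the approximation property (being a finite direct sum of James spaces, which have Schauder bases) while $\overline{\mathcal I(Y_n)^{n+1}}$ is a non-zero closed ideal of $\mathcal L(Y_n)$, Remark~\ref{nonzero} forces $\overline{\mathcal I(Y_n)^{n+1}}=\mathcal K(Y_n)$.

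For part~(ii) the essential point is to produce a single operator $T$ valid for both choices of $\mathcal I$, namely $T\in(\mathcal S\cap\mathcal W)(Y_n)$ with $T^n\notin\mathcal K(Y_n)$. Here the shift operator built from the formal inclusions $i_{p_k,p_{k+1}}\colon J_{p_k}\to J_{p_{k+1}}$ is strictly singular by \eqref{james4} but fails to be weakly compact by \eqref{james2}; so instead I would route each shift step through the complemented subspace $R_{p_k}\subset J_{p_k}$ with $R_{p_k}\approx\ell^{p_k}$ supplied by \eqref{james5}, composing the projection $J_{p_k}\to R_{p_k}$, a fixed isomorphism $R_{p_k}\approx\ell^{p_k}$, the inclusion $\ell^{p_k}\hookrightarrow\ell^{p_{k+1}}$, an isomorphism $\ell^{p_{k+1}}\approx R_{p_{k+1}}$ and the inclusion $R_{p_{k+1}}\subset J_{p_{k+1}}$ into an operator $T_k\colon J_{p_k}\to J_{p_{k+1}}$. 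The inner inclusion $\ell^{p_k}\hookrightarrow\ell^{p_{k+1}}$ is strictly singular by \eqref{prop1} and weakly compact because $\ell^{p_{k+1}}$ is reflexive, so $T_k\in(\mathcal S\cap\mathcal W)(J_{p_k},J_{p_{k+1}})$, and hence the coordinate shift $T=\sum_{k=1}^nJ_{k+1}T_kP_k$ lies in $(\mathcal S\cap\mathcal W)(Y_n)\subset\mathcal S(Y_n)$. Choosing the isomorphisms so that the consecutive pairs $R_{p_{k+1}}\subset J_{p_{k+1}}\to R_{p_{k+1}}$ cancel, one computes that $P_{n+1}T^nJ_1=T_n\cdots T_1$ equals the non-compact inclusion $\ell^{p_1}\hookrightarrow\ell^{p_{n+1}}$ pre- and post-composed with a surjection $J_{p_1}\to\ell^{p_1}$ and an isomorphic embedding $\ell^{p_{n+1}}\hookrightarrow J_{p_{n+1}}$; hence $T^n$ is non-compact, i.e. $T^n\notin\mathcal K(Y_n)=\overline{\mathcal I(Y_n)^{n+1}}$.

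Part~(iii) is then immediate from Theorem~\ref{general} applied with $X=Y_n$, the closed ideal $\mathcal I(Y_n)\subset\mathcal L(Y_n)$ (closed since $\mathcal S$ and $\mathcal S\cap\mathcal W$ are closed operator ideals), and $S=T$ from part~(ii): condition \eqref{crit} reads $T^n\notin\overline{\mathcal I(Y_n)^{n+1}}$, which is precisely what (i) and (ii) yield. Theorem~\ref{general} produces a closed subalgebra $\mathcal M\in\mathcal{SI}_{n+1}(\mathcal L(Z_n))\setminus\mathcal{SI}_n(\mathcal L(Z_n))$ on $Z_n=Y_n\oplus Y_n$, and an inspection of its construction shows that $\mathcal M$ sits inside the closed ideal of operator matrices on $Z_n$ with entries in $\mathcal I(Y_n)$, which coincides with $\mathcal I(Z_n)$ as $\mathcal I$ is an operator ideal. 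The genuinely new point, and the step requiring the most care, is the construction in part~(ii) of a James-space shift that lies in $\mathcal W(Y_n)$ yet has non-compact $n$-th power; for the case $\mathcal I=\mathcal S$ alone the cruder inclusion shift of Theorem~\ref{nsubids2}(ii) would already suffice.
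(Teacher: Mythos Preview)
Your proposal is correct and follows essentially the same approach as the paper's own proof: the pigeon-hole argument for (i) using $\mathcal I(J_{p_r})\subset\mathcal S(J_{p_r})=\mathcal K(J_{p_r})$ from \eqref{james1}, the shift routed through the complemented $\ell^{p_k}$-subspaces from \eqref{james5} for (ii), and the appeal to Theorem~\ref{general} for (iii). The paper likewise remarks that the cruder inclusion shift via $i_{p_k,p_{k+1}}$ already suffices for $\mathcal I=\mathcal S$, and your added observation that $\mathcal M\subset\mathcal I(Z_n)$ is indeed needed and follows as you indicate; note only that the cancellation $q_{k+1}j_{k+1}=I_{R_{p_{k+1}}}$ is automatic from $q_{k+1}$ being a projection onto $R_{p_{k+1}}$, not a matter of choosing the isomorphisms.
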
 

\begin{proof}  The argument follows the outline from Theorem \ref{nsubids2}.  Let $P_k$ and $I_k$ denote the canonical projection and  inclusion 
related to the component $J_{p_k}$ of $Y_n$ for $k=1,\ldots,n+1$.

(i) This is a  modification of the pigeon-hole argument used in part (i) of Theorem \ref{nsubids2}. One requires the additional facts that
\[
\mathcal S(J_{p_r}) = \mathcal S(J_{p_r}) \cap \mathcal W(J_{p_r}) = \mathcal K(J_{p_r}) 
\]
for $1 \le r \le n+1$  in view of  \eqref{james1}, where $\mathcal S \cap \mathcal W$ is an operator ideal. Recall further that the spaces 
$J_p$ have a Schauder basis, so $Y_n$ has the A.P.

\smallskip

(ii)  For $\mathcal I(Y_n)=\mathcal S(Y_n) \cap \mathcal W(Y_n)$ we use \eqref{james5} to pick a complemented subspace $R_{p_k} \subset J_{p_k}$ such that 
$R_{p_k} \approx \ell^{p_k}$ for  $k = 1, \ldots, n+1$. Fix a linear isomorphism $U_k: R_{p_k} \to \ell^{p_k}$, and
let $q_k: J_{p_k} \to R_{p_k}$ be a projection and $j_k$ the inclusion map $R_{p_k} \to  J_{p_k}$, so that $q_kj_k = I_{R_{p_{k}}}$
 for $k = 1,\ldots, n+1$. We define 
 \[
 r_k = j_{k+1}U_{k+1}^{-1}i_kU_kq_k \in \mathcal L(J_{p_k},J_{p_{k+1}}),
 \]
 where $i_k: \ell^{p_k} \to \ell^{p_{k+1}}$ is the inclusion map  for $k = 1,\ldots,n$. 
 \[
\xymatrix{J_{p_k} \ar[rr]^{r_k} \ar[d]_{q_k}  && J_{p_{k+1}}\\
R_{p_k} \ar[d]_{U_k} & & R_{p_{k+1}} \ar[u]_{j_{k+1}}\\
\ell^{p_k}  \ar[rr]^{i_k} && \ell^{p_{k+1}}\ar[u]_{U_{k+1}^{-1}}
}
\]
Recall that
$r_k \in  \mathcal S(J_{p_k},J_{p_{k+1}}) \cap \mathcal W(J_{p_k},J_{p_{k+1}})$ 
by the reflexivity of $\ell^{p_k}$ and the total incomparability of $\ell^p$ and $\ell^q$ for $p \neq q$.
Hence 
 \[
T=\sum_{k=1}^{n}I_{k+1}r_kP_k \in \mathcal S(Y_n) \cap \mathcal W(Y_n).
\] 
By iteration we get that 
\[
T^nx = (0,\ldots,0,r_n \cdots r_1x_1),\quad x = (x_1,\ldots,x_{n+1}) \in Y_n.
\]
It follows that  $P_{n+1}T^nI_1 = r_n \cdots  r_1 \notin \mathcal K(J_{p_1},J_{p_{n+1}})$, 
as the restriction  
\[
(r_n \cdots r_1)_{|R_{p_{1}}} = j_{n+1}U_{n+1}^{-1}(i_n\cdots i_1)U_1
\] 
is up to linear isomorphisms the inclusion map $\ell^{p_1} \to \ell^{p_{n+1}}$.
Conclude that $T^n \notin \mathcal K(Y_n)$. 

Clearly $T \in \mathcal S(Y_n) \cap \mathcal W(Y_n)$ also works for $\mathcal I(Y_n)=\mathcal S(Y_n)$.
(The choice  
\[
S=\sum_{k=1}^{n}I_{k+1}i_{p_k,p_{k+1}}P_k \in \mathcal S(Y_n)
\] 
is a simpler one,  since the inclusion maps $i_{p_k,p_{k+1}}:  J_{p_k}\to J_{p_{k+1}}$ are 
strictly singular for $k=1,\ldots,n$ and $i_{p,q} \notin \mathcal W(J_p,J_q)$ for $q>p$ by \eqref{james2}.)

\smallskip

Finally,  (iii)  follows immediately  from the proof of Theorem \ref{general} and parts (i) - (ii).
\end{proof}

\begin{remarks}\label{jamesvers}
(1) Properties \eqref{james1},  \eqref{james4} and \eqref{james3} yield  that  
\begin{align*}
\mathcal S(Y_n)  =  \{[U_{jk}]  \in \mathcal L(Y_n): &\  U_{jk} \textrm{ is compact for } 1 \le j \le  k  \le n+1,\\
&  U_{jk} \textrm{ is bounded for } 1 \le  k < j   \le n+1\},
\end{align*}
\begin{align*}
\mathcal S(Y_n) \cap \mathcal W(Y_n)  =   \{[U_{jk}]  &\in \mathcal L(Y_n):  U_{jk} \textrm{ is compact for } 1 \le j \le  k  \le n+1,\\
&  U_{jk} \textrm{ is weakly compact for } 1 \le  k < j   \le n+1\}.
\end{align*}
It is known  that $\mathcal S(Y_n) \cap\mathcal W(Y_n)\subsetneq \mathcal S(Y_n)$, so the two choices for $\mathcal I$ in Theorem \ref{nsubids3} are different. In fact,
the inclusion maps $i_{p_r,p_s}:  J_{p_r}\to J_{p_s}$  extend to strictly singular non-weakly compact operators on $Y_n$  for $1\le r<s\le n+1$ in view of  \eqref{james2}  and \eqref{james4}. 
\smallskip

(2) Theorem \ref{general} cannot be applied to the closed ideal $\mathcal I(Y_n)=\mathcal W(Y_n)$, since $\mathcal W(Y_n)=\mathcal W(Y_n)^k$ for all $k\ge 2$. The reason is that $\mathcal W(Y_n)$ has a bounded left approximate identity, that is, there is a bounded net $(V_\alpha)\subset \mathcal W(Y_n)$ such that 
\begin{equation}\label{092725}
\lim_\alpha V_\alpha V=V
\end{equation}
 for all $V\in\mathcal W(Y_n)$. Hence the Cohen-Hewitt factorisation theorem, see e.g. \cite[Corollary 2.9.26]{D00}, implies that $\mathcal W(Y_n)^2=\mathcal W(Y_n)$. The fact \eqref{092725} is seen by combining \cite[Theorem 2.2 and Proposition 2.5]{OT05} with the comment before \cite[Problem 2.4]{OT05} and \cite[Proposition 5.3]{OT05}. In particular, the choice of the closed reference ideal $\mathcal I$ is essential in Theorem \ref{general}.

\smallskip

(3)  It is not difficult to check that $dim(Y_n^{**}/Y_n) = n+1$, so $Y_n$ are quasi-reflexive spaces. This implies that $Y_n \oplus Y_n \not\approx Y_n$, see e.g. \cite{BP60}.
\end{remarks}

\smallskip

The  non-trivial closed $(n+1)$-subideals in Theorems \ref{nsubids2}, \ref{nsubidchain} and \ref{nsubids3}  
are defined on spaces $X$ which have the A.P., where 
$\mathcal K(X)$ is the smallest non-zero subideal. The earlier examples of non-trivial $2$-subideals from  \cite{TW23}, as well as  Sections \ref{Tarbard} and \ref{newsubs},
raise the question whether there are spaces $X$ (without the A.P.)
that allow non-trivial closed subideals $\mathcal J \in \mathcal{SI}_{n+1}(\mathcal L(X))$ for $n \ge 2$ such that $\mathcal A(X) \varsubsetneq \mathcal J \varsubsetneq  \mathcal K(X)$.
It turns out that this is possible, though more sophisticated tools and concepts are  involved.

Let $X$ and $Y$ be Banach spaces and $p \in [1,\infty)$. 
Recall from \cite{PP69} that $T \in \mathcal L(X,Y)$ is a
\textit{quasi $p$-nuclear} operator, denoted $T \in \mathcal{QN}_p(X,Y)$, if there is a strongly $p$-summable sequence $(x_k^*)$ in $X^*$ such that 
\begin{equation}\label{qnp}
\Vert Tx\Vert \le \Big(\sum_{k=1}^\infty \vert x_k^*(x)\vert^p\Big)^{1/p}, \quad x \in X.
\end{equation}
It is known \cite[Section 4]{PP69} that $(\mathcal{QN}_p, \vert \cdot \vert_p)$ defines a Banach operator ideal, where the 
complete ideal  norm is given by 
\[
\vert T\vert_p = \inf\Big\{ \Big(\sum_{k=1}^\infty \Vert x_k^*\Vert^p\Big)^{1/p}:   (x_k^*)  \textrm{ satisfies  } \eqref{qnp}\Big\}.
\]

Let $Z$ be any Banach space. We will require the monotonicity and the H\"{o}lder properties of the classes  $\mathcal{QN}_p$, that is, 
\begin{equation}\label{mono} 
\mathcal{QN}_p(Z)\subset\mathcal{QN}_q(Z) \textrm{ and } \Vert \cdot \Vert \le \vert \cdot \vert_q \le \vert \cdot \vert_p
\end{equation}
 for $p<q$ by \cite[Satz 21 and 24]{PP69}, and for $1/r=1/p+1/q \le 1$ one has 
\begin{equation}\label{hol} 
\textrm{ if } S\in\mathcal{QN}_p(Z)  \textrm{ and } T\in\mathcal{QN}_q(Z),  \textrm{ then } ST\in \mathcal{QN}_{r}(Z), 
 \end{equation}
 see \cite[Satz 48]{PP69}. Moreover, $\mathcal{QN}_p(Z) \subset \mathcal K(Z)$ for any $p$ by \cite[Satz 25]{PP69}, and 
 \begin{equation}\label{qnapp}
 \mathcal{QN}_2(Z) \subset \mathcal A(Z),
  \end{equation}
 see \cite[Remarks 3.2.(ii)]{W23} for references to the ingredients of the argument.
  
The following theorem contains another central result of this section.  
It makes use of a closed subspace of $c_0$ and an associated quasi $p$-nuclear operator from \cite[Section 4]{W23}, 
which in turn depend on constructions of Davie \cite{D75} of Banach spaces without the 
A.P.  and a factorization result of Reinov \cite[Lemma 1.1]{R82}. 
Part (i) of  the theorem produces for $n = 1$ a closed subspace $Z_1 \subset c_0$ and a non-trivial closed $2$-subideal $\mathcal I$ of $\mathcal L(Z_1)$, which 
differs from the examples in \cite{TW23} and Section 2. In part (i) the closed subspaces $Z_n \subset c_0$ and 
$\mathcal M_{n} \in \mathcal{SI}_{n+1}(\mathcal L(Z_n))$ 
depend on $n \in \mathbb N$, but in (ii) we lift them from $\mathcal L(Z_n)$ to $\mathcal L(Z)$ for the direct sum $Z = \big(\bigoplus_{n = 1}^{\infty} Z_n\big)_{c_0}$.

 \begin{theorem}\label{nsubid}
 (i)  Fix $n \in \mathbb N$, and assume that  $2n <  p \le 2(n+1)$. 
  Then  there is a closed subspace $Z_n \subset c_0$ without the A.P.  and a closed subalgebra  $\mathcal M_{n}$ of $\overline{\mathcal{QN}_p(Z_n)}$ such that 
  \[
  \mathcal M_{n} \in \mathcal{SI}_{n+1}(\mathcal L(Z_n)) \setminus \mathcal{SI}_{n}(\mathcal L(Z_n)).
  \]

(ii) Let $Z = \big(\bigoplus_{n = 1}^{\infty} Z_n\big)_{c_0}$. Then  
 \[
 \mathcal{SI}_{n}(\mathcal L(Z)) \varsubsetneq  \mathcal{SI}_{n+1}(\mathcal L(Z)) 
 \]
 for all $n \in \mathbb N$.
 \end{theorem}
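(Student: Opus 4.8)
The plan is to treat part~(i) as the substantive step and to obtain part~(ii) by copying the construction of~(i) into a single coordinate block of~$Z$. For part~(i) I would first extract from \cite[Section~4]{W23} — which rests on Davie's spaces \cite{D75} and Reinov's factorisation \cite[Lemma~1.1]{R82} — a closed subspace $W\subset c_0$ failing the A.P.\ together with an operator $v\in\QN_p(W)$ whose $n$-th power satisfies $v^n\notin\A(W)$. This is the step where the lower bound $2n<p$ is essential: iterating the H\"older property \eqref{hol} gives $v^m\in\QN_{p/m}(W)$ for $m\le n+1$, and once $m\ge n+1$ one has $p/m\le 2$, so $v^m\in\QN_2(W)\subset\A(W)$ by \eqref{mono} and \eqref{qnapp}; thus $n$ is the largest power of $v$ that is allowed to escape $\A(W)$, and $2n<p$ is what makes it actually do so. Next I would put $Z_n=(W\oplus\cdots\oplus W)_{c_0}$ with $n+1$ summands — a closed subspace of $c_0$ failing the A.P., since it is a complemented superspace of $W$ — and let $S\in\QN_p(Z_n)$ be the shift $S(x_1,\dots,x_{n+1})=(0,vx_1,\dots,vx_n)$. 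Then $S^{n+1}=0$ while the $(n+1,1)$-component of $S^n$ equals $v^n$, so $S^n\notin\A(Z_n)$.

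Now I would run the scheme of Theorem~\ref{general} with reference ideal $\mathcal I:=\overline{\QN_p(Z_n)}$, a closed ideal of $\mathcal L(Z_n)$. Exactly as in the proof of Theorem~\ref{nsubids2}(i) one checks $\overline{\QN_p(Z_n)^{n+1}}=\A(Z_n)$: the inclusion $\supseteq$ holds because every rank-one operator is a product of $n+1$ rank-one (hence quasi-$p$-nuclear) operators, and $\subseteq$ because a product of $n+1$ quasi-$p$-nuclear operators is quasi-$2$-nuclear by \eqref{hol} and \eqref{mono}, hence approximable by \eqref{qnapp} — here the upper bound $p\le 2(n+1)$ enters. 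Setting $\mathcal M_0=\mathcal I$ and $\mathcal M_k=span(S,\dots,S^k)+\overline{\QN_p(Z_n)^{k+1}}$ for $1\le k\le n$, the verbatim computation from the proof of Theorem~\ref{general} — using that each power ideal $\QN_p(Z_n)^j$ is a two-sided ideal of $\mathcal L(Z_n)$, that $S^rS^s=S^{r+s}$, and that $\QN_p(Z_n)^m\subseteq\QN_p(Z_n)^{k+1}$ for $m\ge k+1$ — shows that $\mathcal M_n\subset\cdots\subset\mathcal M_1\subset\mathcal I\subset\mathcal L(Z_n)$ is a chain of closed relative ideals, so $\mathcal M_n\in\mathcal{SI}_{n+1}(\mathcal L(Z_n))$, and $\mathcal M_n$ is a closed subalgebra of $\overline{\QN_p(Z_n)}$. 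To see $\mathcal M_n\notin\mathcal{SI}_n(\mathcal L(Z_n))$: by the previous step $\mathcal M_n=span(S,\dots,S^n)+\A(Z_n)$, so if $\mathcal M_n$ were an $n$-subideal, Lemma~\ref{newlemma} would force $sS^n\in\mathcal M_n$ for every $s\in\mathcal L(Z_n)$. Taking $s$ to be the operator carrying the last summand identically onto the first and killing the remaining ones, one gets that $sS^n$ has $(1,1)$-component $v^n$; but every element of $span(S,\dots,S^n)+\A(Z_n)$ has $(1,1)$-component in $\A(W)$, because each $S^k$ with $1\le k\le n$ has vanishing $(1,1)$-component — contradicting $v^n\notin\A(W)$.

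For part~(ii), fix $n$, pick $p_n\in(2n,2(n+1)]$, and let $J_n\colon Z_n\to Z$, $P_n\colon Z\to Z_n$ be the canonical inclusion and projection, so $P_nJ_n=I_{Z_n}$. Transport the data of~(i) by $\widetilde S:=J_nSP_n\in\QN_{p_n}(Z)$; then $\widetilde S^{\,k}=J_nS^kP_n$, so $\widetilde S^{\,n}=J_nS^nP_n\notin\A(Z)$ (else $S^n=P_n\widetilde S^{\,n}J_n\in\A(Z_n)$), while $\overline{\QN_{p_n}(Z)^{n+1}}=\A(Z)$ as before. Repeating the argument of~(i) with the genuine closed ideal $\overline{\QN_{p_n}(Z)}$ of $\mathcal L(Z)$, the subalgebras $\widetilde{\mathcal M}_k:=span(\widetilde S,\dots,\widetilde S^{\,k})+\overline{\QN_{p_n}(Z)^{k+1}}$ form a chain of closed relative ideals down to $\widetilde{\mathcal M}_n\in\mathcal{SI}_{n+1}(\mathcal L(Z))$. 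Finally, with $s_0\in\mathcal L(Z_n)$ the operator from~(i) satisfying $s_0S^n\notin span(S,\dots,S^n)+\A(Z_n)$ and $s:=J_ns_0P_n$, one has $s\widetilde S^{\,n}=J_n(s_0S^n)P_n$, and $s\widetilde S^{\,n}\in\widetilde{\mathcal M}_n=span(\widetilde S,\dots,\widetilde S^{\,n})+\A(Z)$ would give $s_0S^n=P_n(s\widetilde S^{\,n})J_n\in span(S,\dots,S^n)+\A(Z_n)$, a contradiction. Hence $\widetilde{\mathcal M}_n\notin\mathcal{SI}_n(\mathcal L(Z))$ by Lemma~\ref{newlemma}, so $\mathcal{SI}_n(\mathcal L(Z))\varsubsetneq\mathcal{SI}_{n+1}(\mathcal L(Z))$ for every $n\in\mathbb N$.

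The main obstacle is the input imported from \cite[Section~4]{W23}: producing a subspace of $c_0$ without the A.P.\ carrying a quasi-$p$-nuclear operator whose $n$-th power is still non-approximable is the genuinely hard part, and it is precisely what pins down the admissible range $2n<p\le 2(n+1)$. Everything else is bookkeeping modelled on Theorem~\ref{general} and Lemma~\ref{newlemma}.
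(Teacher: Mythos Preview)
Your argument is correct, and part~(ii) matches the paper's proof almost verbatim. The one genuine difference is in part~(i): the paper sets $Z_n = X_n \oplus X_n$ (two copies) and invokes Theorem~\ref{general} as a black box, using the $2\times 2$ block structure and the off-diagonal test element $\begin{bmatrix}0&0\\ I_{X_n}&0\end{bmatrix}T^n$ to witness $\mathcal M_n\notin\mathcal{SI}_n$. You instead take $Z_n=W^{\,n+1}$ and build a weighted shift $S$, so that $S^{n+1}=0$ while the $(n+1,1)$-entry of $S^n$ is $v^n$; your contradiction comes from the $(1,1)$-entry after composing with a ``cycle-back'' operator. This is a legitimate variant --- essentially the construction of Theorem~\ref{nsubids2} transplanted into the $\mathcal{QN}_p$ setting --- and it bypasses the extra doubling $X\oplus X$ because the multi-block structure of $W^{\,n+1}$ already supplies the transverse direction that Theorem~\ref{general} manufactures by passing to $2\times 2$ matrices. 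The paper's route is more economical (two summands rather than $n+1$) and reuses Theorem~\ref{general} verbatim; yours makes the nilpotency $S^{n+1}=0$ visible and keeps the argument self-contained. Both rest on the same hard input from \cite[Section~4]{W23}.
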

 
 \begin{proof}
(i) Fix $n \in \mathbb N$ and let  $p \in (2n, 2(n+1)]$. According to \cite[Proposition 4.3]{W23} there is a closed subspace $X_n \subset c_0$ together with 
$S \in\mathcal{QN}_p(X_n)$ such that $S^n\notin \mathcal A(X_n)$. 
By construction  $X_n$ fails the  A.P.  

In order to apply Theorem \ref{general} we require the following observation, which is essentially contained in \cite[Proposition 4.1 and Remarks 4.2.(ii)]{W23}:
if $2n <  p \le 2(n+1)$, then
\begin{equation}\label{powers1} 
\overline{\mathcal{QN}_p(Y)^{n+1}} = \mathcal A(Y)
\end{equation}
holds for any Banach space $Y$.
In fact, the H\"older property \eqref{hol} together with  \eqref{mono} and \eqref{qnapp} imply  that
\begin{equation*}\label{qnhol1}
\mathcal{QN}_p(Y)^{n+1}  \subset \mathcal{QN}_{p/(n+1)}(Y) \subset \mathcal{QN}_2(Y)  \subset \mathcal A(Y),
\end{equation*}
as $p/(n+1) \le 2$ by assumption. Since $\mathcal F(Y) \subset \mathcal{QN}_p(Y)^{n+1}$ we obtain 
\eqref{powers1} by passing to the closures. 

Hence the preceding operator  $S \in\mathcal{QN}_p(X_n)$  satisfies $S^n \notin \overline{\mathcal{QN}_p(X_n)^{n+1}}$  by  \eqref{powers1}.  
Let $Z_n=X_n\oplus X_n \subset c_0 \oplus c_0 \simeq  c_0$ (isometric isomorphism). Thus $Z_n \subset c_0$ is a closed subspace that  fails the A.P.  We put
\[
 T=
 \begin{bmatrix}
 S&0\\
0&0
\end{bmatrix}\in\mathcal{QN}_p(Z_n)
\]
and define as in  Theorem \ref{general}
\begin{equation}\label{lastideal2}
\mathcal M_{n}= span(T,\ldots,T^{n})+ \overline{\mathcal{QN}_p(Z_n)^{n+1}}. 
\end{equation}

We may now apply Theorem \ref{general} for the closed ideal $\mathcal J = \overline{\mathcal{QN}_p(Z_n)}$, and deduce that the closed subalgebra 
\[
\mathcal M_{n} \in \mathcal{SI}_{n+1}(\mathcal L(Z_n)) \setminus \mathcal{SI}_{n}(\mathcal L(Z_n)).
\]

\smallskip

(ii)  We again use $J_n: Z_n \to Z$ and $P_n: Z \to Z_n$ for 
the canonical inclusions, respectively the canonical projections associated to $Z$. 

For all $n\in\mathbb N$ pick $p_n\in(2n,2(n+1)]$ and let $T_n =
 \begin{bmatrix}
 S_n&0\\
0&0
\end{bmatrix} \in \mathcal{QN}_{p_{n}}(Z_n)$ be the operator from  the proof of part (i).
 (Note that the operators $T_n$ and $S_n$ as well as $p_n$ depend on $n$  in that argument, 
 even though the  dependence was not displayed explicitly.)  Consider
 \[
 R_n = J_nT_nP_n \in \mathcal{QN}_{p_{n}}(Z)
 \]
for all $n\in\mathbb N$  and put 
 \[
\mathcal  K_{n+1} := span(R_n,\ldots, R_n^n) + \mathcal A(Z).
 \]
 
\textit{Claim}.  The closed subalgebra $\mathcal K_{n+1}$ of $\mathcal L(Z)$  satisfies 
 \[
\mathcal K_{n+1}  \in  \mathcal{SI}_{n+1}(\mathcal L(Z)) \setminus \mathcal{SI}_{n}(\mathcal L(Z)) 
 \]
for all $n\in\mathbb N$.
 
 To see this we introduce the intermediary closed linear subspaces 
 \[
\mathcal  L_k := span(R_n,\ldots, R_n^k) + \overline{\mathcal{QN}_{p_{n}}(Z)^{k+1}} \subset \overline{\mathcal{QN}_{p_{n}}(Z)}
 \]
 for $k = 1,\ldots, n$.  We know  that $\overline{\mathcal{QN}_{p_n}(Z)^{n+1}} = \mathcal A(Z)$ in view of  \eqref{powers1} 
 as $2n < p_n \le 2(n+1)$, so that $\mathcal  L_n = \mathcal K_{n+1}$. By arguing as in the proof of Theorem \ref{general} we obtain that
 the chain 
 \[
  \mathcal K_{n+1} = \mathcal  L_n \subset \mathcal  L_{n-1} \subset  \cdots \subset \mathcal  L_1 \subset \overline{\mathcal{QN}_{p_{n}}(Z)}
   \subset  \mathcal L(Z)
 \]
consists of closed relative ideals.  It follows that $\mathcal K_{n+1}$ is a closed $n$-subideal of $\overline{\mathcal{QN}_{p_{n}}(Z)}$, so that 
$\mathcal K_{n+1} \in  \mathcal{SI}_{n+1}(\mathcal L(Z))$. 

\smallskip

Suppose next to the contrary that $\mathcal K_{n+1} \in \mathcal{SI}_{n}(\mathcal L(Z))$. Hence, by Lemma \ref{newlemma}, we have
\[J_n  \begin{bmatrix}
0&0\\
S_n^n&0
\end{bmatrix} P_n  =
J_n  \left(\begin{bmatrix}
0&0\\
I_{X_{n}}&0
\end{bmatrix} T_n^n \right) P_n 
 = \left(J_n  \begin{bmatrix}
0&0\\
I_{X_{n}}&0
\end{bmatrix} P_n\right)R^n_n 
\in   \mathcal  K_{n+1} .\]
Above  we have applied the identities  $P_n J_n = I_{Z_{n}}$ and $R^{n}_n = J_nT^{n}_nP_n$.
Observe next that if 
\[
U = \sum_{k=1}^n a_k R_n^k + V \in \mathcal  K_{n+1} = span(R_n,\ldots, R_n^n) + \mathcal A(Z)
\]
for some scalars $a_1,\ldots , a_n$ and $V \in \mathcal A(Z)$, then
\[
P_nUJ_n =  \sum_{k=1}^n a_k P_n R_n^kJ_n + P_nVJ_n =   \sum_{k=1}^n a_k T_n^k + P_nVJ_n \in   \mathcal M_{n},
\]
where  $\mathcal M_{n} \subset \mathcal L(Z_n)$ is the closed $(n+1)$-subideal  defined in  \eqref{lastideal2}. In particular, 
$\begin{bmatrix}
0&0\\
S_n^n&0
\end{bmatrix}\in \mathcal M_{n}$, which was already shown to be impossible during the proof of Theorem \ref{general} (see the verification of \eqref{1409}). 
\end{proof}

The Banach operator ideal  component $\mathcal B := (\mathcal{QN}_p(X),\vert \cdot \vert_p)$ is also a Banach algebra in its own right, where 
$\vert \cdot \vert_p$ is the associated complete ideal norm. 
We include a variant  of part (i) of Theorem \ref{nsubid} for $\mathcal B$, where  the subalgebras and $n$-subideals 
are closed in the  $\vert \cdot \vert_p$-norm. This setting is somewhat outside of Theorem \ref{general} (but Lemma \ref{newlemma} applies).
 
 \begin{proposition}\label{nsubidqn}
  For any $n \ge 2$ and $p \in (2n,2(n+1)]$ there is a closed subspace $Z_n \subset c_0$ and a closed subalgebra  
  $\mathcal J_n \subset \mathcal B := (\mathcal{QN}_p(Z_n),\vert \cdot \vert_p)$ such that 
  \[
   \mathcal J_n \in \mathcal{SI}_{n}(\mathcal B) \setminus \mathcal{SI}_{n-1}(\mathcal B).
 \]
 \end{proposition}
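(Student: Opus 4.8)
The plan is to re-run the construction behind Theorem~\ref{general} and Theorem~\ref{nsubid}(i), but carried out entirely inside the Banach algebra $\mathcal B=(\mathcal{QN}_p(Z_n),\vert\cdot\vert_p)$ rather than inside $\mathcal L(Z_n)$. Since $\mathcal B$ itself now plays the role of the closed reference ideal, the chain of relative ideals is one link shorter than in Theorem~\ref{general}, which is exactly why the index drops from $n+1$ to $n$ (and why we must assume $n\ge2$, so that $\mathcal{SI}_{n-1}(\mathcal B)$ is an honest subideal class). First I would fix, as in the proof of Theorem~\ref{nsubid}(i) and using \cite[Proposition 4.3]{W23} (available since $2n<p\le2(n+1)$), a closed subspace $X_n\subset c_0$ without the A.P. and an operator $S\in\mathcal{QN}_p(X_n)$ with $S^n\notin\mathcal A(X_n)$; then set $Z_n=X_n\oplus X_n\subset c_0$, $T=\begin{bmatrix}S&0\\0&0\end{bmatrix}$, and
\[
\mathcal J_n:=span(T,T^2,\ldots,T^n)+\overline{\mathcal{QN}_p(Z_n)^{n+1}}^{\vert\cdot\vert_p}.
\]
As a set, $\mathcal{QN}_p(Z_n)$ is the $2\times2$ operator matrices with entries in $\mathcal{QN}_p(X_n)$ (apply the ideal property to the canonical projections and inclusions of $Z_n$), and by the H\"older property \eqref{hol} together with \eqref{mono} each power $S,\ldots,S^n$ lies in $\mathcal{QN}_p(X_n)$; hence $T,\ldots,T^n\in\mathcal B$ and $\mathcal J_n$ is a closed subalgebra of $\mathcal B$.

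Next I would verify that, with $\mathcal M_0:=\mathcal B$ and $\mathcal M_k:=span(T,\ldots,T^k)+\overline{\mathcal B^{k+1}}^{\vert\cdot\vert_p}$ for $1\le k\le n$ (so $\mathcal M_n=\mathcal J_n$), the chain
\[
\mathcal J_n=\mathcal M_n\subset\mathcal M_{n-1}\subset\cdots\subset\mathcal M_1\subset\mathcal M_0=\mathcal B
\]
consists of closed relative ideals, so that $\mathcal J_n\in\mathcal{SI}_n(\mathcal B)$. The one algebraic point to record is that the power ideals form a decreasing sequence, $\mathcal B\supset\mathcal B^2\supset\mathcal B^3\supset\cdots$, because a product of $j\ge2$ factors can be regrouped as a product of $j-1$ factors; consequently $\mathcal B\cdot\mathcal B^{k+1}\subset\mathcal B^{k+2}\subset\mathcal B^{k+1}$, so $\overline{\mathcal B^{k+1}}^{\vert\cdot\vert_p}$ is a closed two-sided ideal of $\mathcal B$. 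Granting this, the computation showing that $\mathcal M_k$ is a closed ideal of $\mathcal M_{k-1}$ is word-for-word the argument in the proof of Theorem~\ref{general}, with $\mathcal B$ now playing the role of both the ambient algebra and the reference ideal $\mathcal J$: it uses only $T^j\in\mathcal B^{j}$, $\mathcal B^{j}\subset\mathcal B^{k+1}$ for $j\ge k+1$, the continuity of the product on the Banach algebra $\mathcal B$, and the fact that $\mathcal M_k$, a finite-dimensional extension of the $\vert\cdot\vert_p$-closed subspace $\overline{\mathcal B^{k+1}}^{\vert\cdot\vert_p}$, is again $\vert\cdot\vert_p$-closed.

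The crux is to show $\mathcal J_n\notin\mathcal{SI}_{n-1}(\mathcal B)$, and I would argue by contradiction. If $\mathcal J_n\in\mathcal{SI}_{n-1}(\mathcal B)$, then Lemma~\ref{newlemma}, applied inside $\mathcal B$ with $t=T\in\mathcal J_n$ and $s=\begin{bmatrix}0&0\\S&0\end{bmatrix}\in\mathcal B$ (here we may spend one factor of $S$ in the multiplier, since the multiplier is allowed to range over all of $\mathcal B$), gives
\[
s\,T^{\,n-1}=\begin{bmatrix}0&0\\S^n&0\end{bmatrix}\in\mathcal J_n .
\]
Writing this element as $\sum_{k=1}^{n}a_kT^k+W$ with $W\in\overline{\mathcal B^{n+1}}^{\vert\cdot\vert_p}$ and extracting the $(2,1)$-entry — a $\vert\cdot\vert_p$-continuous compression by the norm-one inclusion and projection maps — I obtain $S^n\in\overline{\mathcal{QN}_p(X_n)^{n+1}}^{\vert\cdot\vert_p}$, since the $(2,1)$-entries of $span(T,\ldots,T^n)$ all vanish. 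Since $\vert\cdot\vert_p$ dominates the operator norm, this $\vert\cdot\vert_p$-closure is contained in the operator-norm closure $\overline{\mathcal{QN}_p(X_n)^{n+1}}^{\Vert\cdot\Vert}=\mathcal A(X_n)$, where the last equality is \eqref{powers1} and is precisely where the hypothesis $2n<p\le2(n+1)$ enters (it forces $p/(n+1)\le2$ and hence $\mathcal{QN}_p(X_n)^{n+1}\subset\mathcal{QN}_2(X_n)\subset\mathcal A(X_n)$ via \eqref{hol}, \eqref{mono}, \eqref{qnapp}). Thus $S^n\in\mathcal A(X_n)$, contradicting the choice of $S$, and therefore $\mathcal J_n\in\mathcal{SI}_n(\mathcal B)\setminus\mathcal{SI}_{n-1}(\mathcal B)$.

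I do not expect a genuine obstacle: the construction and the ``not an $(n-1)$-subideal'' step are faithful transcriptions of the proofs of Theorem~\ref{general} and Theorem~\ref{nsubid}(i), with Lemma~\ref{newlemma} supplying the only non-routine input. The one thing demanding a little care is the consistent replacement of the operator norm by the ideal norm $\vert\cdot\vert_p$ throughout — that $\mathcal{QN}_p(Z_n)$ really is the matrix algebra over $\mathcal{QN}_p(X_n)$, that $\overline{\mathcal B^{k+1}}^{\vert\cdot\vert_p}$ is $\vert\cdot\vert_p$-closed, and that coordinate compressions are $\vert\cdot\vert_p$-continuous — all of which follows directly from the Banach operator ideal axioms for $(\mathcal{QN}_p,\vert\cdot\vert_p)$.
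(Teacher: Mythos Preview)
Your proposal is correct and follows essentially the same approach as the paper's proof: the same choice of $X_n$, $S$, $Z_n$, $T$ and the same chain $\mathcal J_k=span(T,\ldots,T^k)+\overline{\mathcal B^{k+1}}^{\vert\cdot\vert_p}$, with the contradiction obtained via Lemma~\ref{newlemma} applied inside $\mathcal B$ using the multiplier $\begin{bmatrix}0&0\\S&0\end{bmatrix}\in\mathcal B$ against $T^{n-1}$. The only cosmetic difference is that the paper establishes $\overline{\mathcal B^{n+1}}^{\vert\cdot\vert_p}\subset\mathcal A(Z_n)$ first and then reads off the $(2,1)$-entry, whereas you extract the $(2,1)$-entry first and then pass to $\mathcal A(X_n)$; both orderings are valid.
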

 
 \begin{proof}
 We sketch the main differences to Theorems  \ref{general} and \ref{nsubid}.  
 
 Fix $n \ge 2$ and $p \in (2n,2(n+1)]$. As  in the proof of Theorem  \ref{nsubid}  we pick a closed linear subspace $X_n \subset c_0$ and 
an operator  $S \in \mathcal{QN}_p(X_n)$ such that  $S^n \notin \mathcal A(X_n)$. 
Let $Z_n=X_n\oplus X_n$ and consider 
$T=
 \begin{bmatrix}
 S&0\\
0&0
\end{bmatrix}\in\mathcal{QN}_p(Z_n)$. For $1 \le k \le n$ we define
\[
\mathcal J_k  :=span(T,\ldots,T^k)+\overline{\mathcal B^{k+1}}^{\vert \cdot \vert_p}.
\]
By imitating  the proof of Theorem \ref{general} one verifies that the chain 
\begin{equation}\label{newchain1}
\mathcal J_n \subset \mathcal J_{n-1} \subset \cdots \subset  \mathcal J_1 \subset \mathcal B = \mathcal{QN}_p(Z_n)
\end{equation}
consists of $\vert \cdot \vert_p$-closed relative ideals. Recall here that $\vert \cdot \vert_p$ is a Banach algebra norm, so that multiplication is continuous on $\mathcal B$. 
The explicit chain \eqref{newchain1} implies that  $\mathcal J_n \in \mathcal{SI}_{n}(\mathcal B)$.

\smallskip

Next, suppose to the contrary that $\mathcal J_n\in\mathcal{SI}_{n-1}(\mathcal B)$.
By Lemma \ref{newlemma} we have 
 \begin{equation}\label{iterate}
\begin{bmatrix}
0&0\\
S^n&0
\end{bmatrix} = 
\begin{bmatrix}
0&0\\
S&0
\end{bmatrix} \circ  T^{n-1} \in  \mathcal J_n,
\end{equation}
since $\begin{bmatrix}
0&0\\
S&0
\end{bmatrix} \in\mathcal B$.  
We may thus write 
\[
\begin{bmatrix}
0&0\\
S^n&0
\end{bmatrix} = \sum_{k=1}^n a_kT^k + U = \begin{bmatrix}
\sum_{k=1}^n a_kS^k+U_{11}&U_{12}\\
U_{21}&U_{22}
\end{bmatrix},
\]
where  $U = \begin{bmatrix}
U_{11}&U_{12}\\
U_{21}&U_{22}
\end{bmatrix} \in  \overline{\mathcal B^{n+1}}^{\vert \cdot \vert_p}$. We observe next that  
\begin{equation}\label{qnp1}
\overline{\mathcal B^{n+1}}^{\vert \cdot \vert_p} \subset  \mathcal A(Z_n).
\end{equation}
In fact, from \eqref{hol}, \eqref{mono} and \eqref{qnapp} we obtain that
\[
\mathcal B^{n+1} = \mathcal{QN}_p(Z_n)^{n+1} \subset   \mathcal{QN}_{p/(n+1)}(Z_n)  \subset \mathcal{QN}_2(Z_n)  \subset \mathcal A(Z_n)
\]
since $(n+1)/p \le 1$ and $p/(n+1) \le 2$.  This implies  \eqref{qnp1}, since
the operator norm satisfies $\Vert \cdot \Vert \le \vert \cdot \vert_p$ by \eqref{mono}.

By combining these facts we reach  the contradiction that $S^n \in \mathcal A(X_n)$. Thus $\mathcal J_{n} \notin \mathcal{SI}_{n-1}(\mathcal B)$. 
 \end{proof}
 
 \smallskip

Theorem \ref{general} also applies to a class of  $p$-compact operators, which is related to $\mathcal{QN}_p$ by duality.
Let $X$ and $Y$ be Banach spaces, and $1 < p < \infty$. We say that  $T \in \mathcal L(X,Y)$ is a \textit{(Sinha-Karn) $p$-compact} operator, 
denoted $T \in \mathcal{SK}_p(X,Y)$, if there is a strongly $p$-summable sequence $(y_k)$ in $Y$ such that 
\[
T(B_X) \subset \Big\{\sum_{k=1}^\infty a_ky_k: (a_k) \in B_{\ell^{p'}}\Big\}.
\] 
Here  $1/p + 1/p' = 1$ and $B_Z$ is the closed unit ball of the space $Z$. 
There is a complete norm $\vert \cdot \vert_{ \mathcal{SK}_p}$ such that  $( \mathcal{SK}_p, \vert \cdot \vert_{ \mathcal{SK}_p})$ 
defines a Banach operator ideal, see e.g. \cite[Theorem 4.2]{SK02}. 
(The classes $\mathcal{SK}_p$ were introduced in \cite{SK02} as the $p$-compact operators, but we use the above terminology to distinguish them from 
an earlier  notion of  $p$-compactness in the literature, see e.g. \cite[p. 529]{Pi14}.)
Properties of $\mathcal{SK}_p$ have been studied quite intensively, see e.g.  \cite{O12}, \cite{GLT12},  
\cite[Section 3]{W23} and their  references. 
By \cite[Theorem 2.8]{GLT12} one has 

$\bullet$  $T \in \mathcal{SK}_p(X,Y)$ if and only if $T^* \in \mathcal{QN}_p(Y^*,X^*)$, 

$\bullet$  $T \in \mathcal{QN}_p(X,Y)$ if and only if $T^* \in \mathcal{SK}_p(Y^*,X^*)$,

\noindent where the respective ideal norms are preserved. Hence 
the classes $\mathcal{SK}_p$  satisfy the exact analogues of \eqref{mono}, \eqref{hol} and \eqref{qnapp}.

We formulate a variant for $\mathcal{SK}_p$ of part (i) of Theorem \ref{nsubid}, which does not directly follow by 
duality from the analogous result for $\mathcal{QN}_p$.

 \begin{example}\label{nsubidsk} 
  Let $n \in \mathbb N$ and $p \in (2n, 2(n+1)]$. 
 Then there is a closed subspace $Z_n \subset c_0$ and a closed subalgebra  $\mathcal M_{n}$ of $\overline{\mathcal{SK}_p(Z_n)}$ such that 
  \[
  \mathcal M_{n} \in \mathcal{SI}_{n+1}(\mathcal L(Z_n)) \setminus \mathcal{SI}_{n}(\mathcal L(Z_n)).
  \] 
 \end{example}
 
 \begin{proof}
 The argument  follows the general outline  of  Theorem \ref{nsubid}.(i).
 According to \cite[Proposition 4.3]{W23} there is a closed subspace $Y_n\subset c_0$ as well as 
$V \in\mathcal{SK}_p(Y_n)$ such that $V^n\notin \mathcal A(Y_n)$. 

We will also require the fact that  if $2n <  p \le 2(n+1)$, then
\begin{equation}\label{powers2} 
\overline{\mathcal{SK}_p(Y)^{n+1}} = \mathcal A(Y)
\end{equation}
holds for any Banach space $Y$.
This equality is verified exactly as in the proof of  \eqref{powers1} by using the $\mathcal{SK}_p$-versions of properties \eqref{mono} - \eqref{qnapp}
(cf.  \cite[Remarks 4.2.(ii)]{W23}).

Let $Z_n = Y_n \oplus Y_n$. The result follows by applying Theorem \ref{general} for the closed ideal $\mathcal J = \overline{\mathcal{SK}_p(Z_n)}$
of $\mathcal L(Z_n)$. The desired $(n+1)$-subideal has the form
\begin{equation*}
\mathcal M_{n}= span(T,\ldots,T^{n})+ \overline{\mathcal{SK}_p(Z_n)^{n+1}},
\end{equation*}
where $T=
 \begin{bmatrix}
V&0\\
0&0
\end{bmatrix}\in \mathcal{SK}_p(Z_n)$.
 \end{proof}
     
 
 \section{Concluding remarks}\label{conclude}

We include here a  mixed collection  of results that clarify various  aspects of  Section  \ref{nsubs}. For instance, 
in  the context of Theorem \ref{general}  there is additional information on certain powers of the closed subideals constructed therein.
We formulate a sufficient condition that yields closed $n$-subideals of general  Banach algebras,
and apply it in Theorem \ref{tarbard2} to find higher closed subideals related to the Tarbard spaces.

Let $\mathcal  A$ be a Banach algebra. 
The ideal of  $\mathcal  A$ generated by the subset $M\subset \mathcal  A$ is denoted by
\[
[M]_{\mathcal A} :=\bigcap\{\mathcal I  : \ \mathcal I \textrm{  is an ideal of } \mathcal  A \textrm{ containing } M\}, 
\]
and the closure
\[
\overline{[M]}_{\mathcal A} = \bigcap\{\mathcal I  : \ \mathcal I \textrm{  is a closed ideal of } \mathcal  A \textrm{ containing } M\}
\]
is the corresponding closed ideal of $\mathcal A$ generated by $M$.  It is well known that
\begin{equation}\label{generatedideal}
[M]_{\mathcal A}= \mathcal A M+M \mathcal A+ \mathcal A M  \mathcal A+M,
\end{equation}
where $\mathcal A M=span\{am:  a\in \mathcal A, m\in M\}$, and $M \mathcal A$ as well as  $\mathcal A  M  \mathcal A$ are defined similarly.
We abbreviate our notation as $[M] := [M]_{\mathcal L(X)}$ and $\overline{[M]} := \overline{[M]}_{\mathcal L(X)}$
for $\mathcal  A  = \mathcal L(X)$, where $X$ is a Banach space and $M \subset \mathcal L(X)$.

We first discuss powers of closed  $n$-subideals  for Banach algebras $\mathcal  A$. By way of  motivation we recall  a lemma of Andrunakievich  
concerning (ring theoretic) subideals  from ring theory, where the setting need not allow for scalar multiplication.

Let $\mathcal R$ be a ring, and suppose that  $\mathcal J \subset \mathcal I \subset \mathcal R$ are subrings. We say here that 
$\mathcal J$ is a (ring) \textit{subideal} of $\mathcal R$ if $\mathcal J$ is a ring  ideal of $\mathcal I$ and  $\mathcal I$ is a ring ideal of $\mathcal R$. 
Such subideals occur in the following  observation  of Andrunakievich (restated in this terminology):
\textit{if $\mathcal J$ is a subideal of the ring $\mathcal R$, then}
\begin{equation}\label{al}
(\mathcal J)^3_{\mathcal R} \subset \mathcal J,
\end{equation}
see e.g. \cite[Lemma 4]{A58} or \cite[p. 11]{GW04}. 
Above $(\mathcal J)_{\mathcal R}$ is the ring ideal of $\mathcal R$ generated by $\mathcal J$, and the power $(\mathcal J)^3_{\mathcal R}$ 
is the ring ideal of $\mathcal R$ generated by the set 
$\{a_1a_2a_3:  a_j \in (\mathcal J)_{\mathcal R} \textrm{ for } j = 1, 2, 3\}$.

The following result is a version of \eqref{al} for Banach algebras. It is known that 
$\mathcal K(\ell^2)$  contains ring ideals that are not algebra ideals, see  \cite[Example 4.1]{PW14}, so  Proposition \ref{andru} concerns  
a different type of ideals compared to \eqref{al}. 

\begin{proposition}\label{andru}
If $\mathcal A$  is a Banach algebra and  $\mathcal J\in\mathcal{SI}_{2}(\mathcal A)$, then 
 \[
 \overline{[\mathcal J]_{\mathcal A}^3} \subset \mathcal J.
 \]
\end{proposition}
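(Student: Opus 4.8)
The plan is to transcribe Andrunakievich's classical computation to the Banach-algebra setting, using the hypothesis that $\mathcal J\in\mathcal{SI}_2(\mathcal A)$ means there is a closed ideal $\mathcal I$ of $\mathcal A$ such that $\mathcal J\subset\mathcal I$ is a (two-sided, closed) ideal of $\mathcal I$. The first step is to record that the algebraic ideal $[\mathcal J]_{\mathcal A}$ is trapped inside $\mathcal I$: from the description \eqref{generatedideal} we have $[\mathcal J]_{\mathcal A}=\mathcal A\mathcal J+\mathcal J\mathcal A+\mathcal A\mathcal J\mathcal A+\mathcal J$, and since $\mathcal J\subset\mathcal I$ and $\mathcal I$ is an ideal of $\mathcal A$, each of the four summands lies in $\mathcal I$; hence $[\mathcal J]_{\mathcal A}\subset\mathcal I$.

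Next comes the core step. With the paper's convention, $[\mathcal J]_{\mathcal A}^3=span\{x_1x_2x_3:\ x_i\in[\mathcal J]_{\mathcal A}\}$ (and this is automatically an ideal of $\mathcal A$, being a product of ideals, which matches the ring-theoretic $(\mathcal J)^3_{\mathcal R}$ in \eqref{al}), so it suffices to show $x_1x_2x_3\in\mathcal J$ whenever $x_1,x_2,x_3\in[\mathcal J]_{\mathcal A}$. To squeeze the middle factor between two copies of $\mathcal I$ I would pass to the unitization $\mathcal A^{\#}$; then $\mathcal I$ is still an ideal of $\mathcal A^{\#}$ and, by \eqref{generatedideal}, $[\mathcal J]_{\mathcal A}=\mathcal A^{\#}\mathcal J\mathcal A^{\#}$, so one may write $x_2=\sum_{k=1}^m u_k j_k v_k$ with $u_k,v_k\in\mathcal A^{\#}$ and $j_k\in\mathcal J$. (Equivalently, without unitizing, $x_2$ is a finite sum of terms $uj v$ with $j\in\mathcal J$ and each of $u,v$ either absent or an element of $\mathcal A$.) Then
\[
x_1x_2x_3=\sum_{k=1}^m (x_1u_k)\,j_k\,(v_k x_3).
\]
Since $x_1,x_3\in[\mathcal J]_{\mathcal A}\subset\mathcal I$ and $\mathcal I$ is an ideal of $\mathcal A^{\#}$, we get $x_1u_k\in\mathcal I$ and $v_kx_3\in\mathcal I$; and since $\mathcal J$ is a two-sided ideal of $\mathcal I$, first $(x_1u_k)j_k\in\mathcal J$ and then $\bigl((x_1u_k)j_k\bigr)(v_kx_3)\in\mathcal J$. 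Therefore $x_1x_2x_3\in\mathcal J$, which gives $[\mathcal J]_{\mathcal A}^3\subset\mathcal J$. Finally, as $\mathcal J$ is closed, $\overline{[\mathcal J]_{\mathcal A}^3}\subset\overline{\mathcal J}=\mathcal J$.

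There is no genuine obstacle here: the argument is the Banach-algebra version of the Andrunakievich lemma, and the relevant subideal structure is exactly the hypothesis. The only points needing a little care are the bookkeeping of the generators of $[\mathcal J]_{\mathcal A}$ (handled cleanly by passing to $\mathcal A^{\#}$, so that the middle factor $x_2$ is a finite sum of elements of the simple shape $u\,j\,v$), and confirming that $[\mathcal J]_{\mathcal A}^3$ is to be read as the product ideal — which it is, automatically.
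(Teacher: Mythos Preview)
Your proof is correct and follows essentially the same route as the paper: both arguments reduce to showing $x_1x_2x_3\in\mathcal J$ by decomposing the middle factor via \eqref{generatedideal}, using $[\mathcal J]_{\mathcal A}\subset\mathcal I$ to push the outer factors into $\mathcal I$, and then applying that $\mathcal J$ is an ideal of $\mathcal I$. The only cosmetic difference is that you pass to the unitization $\mathcal A^{\#}$ to write $x_2$ uniformly as a sum of terms $u\,j\,v$, whereas the paper handles the four summand types of \eqref{generatedideal} separately.
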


\begin{proof}
By linearity and continuity it is enough to verify that $s_1s_2s_3\in \mathcal J$ for any $s_1,s_2,s_3\in [\mathcal J]_A$, since
$\mathcal J$ is a closed subalgebra of $\mathcal A$. Recall from \eqref{generatedideal} that
\[
[\mathcal J]_{\mathcal A} = \mathcal  A \mathcal  J + \mathcal   J  \mathcal   A+ \mathcal   A  \mathcal J  \mathcal   A+  \mathcal  J.
\] 
We may again assume by linearity that $s_2=u_1t_1+t_2u_2+u_3t_3u_4+t_4$ where $u_i\in  \mathcal   A$ and $t_i\in  \mathcal  J$ for  $i=1,\ldots,4$. Thus
\begin{equation}\label{1908}
s_1s_2s_3=s_1u_1t_1s_3+s_1t_2u_2s_3+s_1u_3t_3u_4s_3+s_1t_4s_3.
\end{equation}
Here $s_1u_1\in [\mathcal J]_{\mathcal A}$ as $[\mathcal J]_{\mathcal A}$ is an ideal of $ \mathcal A$. By assumption $\mathcal J \subset \mathcal I \subset \mathcal A$,
where $\mathcal I$ is a closed ideal of $\mathcal A$ and $\mathcal J$ is an ideal of $\mathcal I$. Thus $[\mathcal J]_{\mathcal A} \subset \mathcal I$, so that 
$\mathcal  J$ is an ideal of $[\mathcal J]_{\mathcal A}$. This implies that  $t_1s_3\in  \mathcal  J$, and thus  $s_1u_1t_1s_3\in  \mathcal  J$.

In a similar manner one verifies that the other three terms on the right hand side of \eqref{1908} belong to $\mathcal   J$, whence  $s_1s_2s_3\in  \mathcal  J$.
\end{proof}  

 Concerning the algebras $\mathcal L(X)$ the following extension of  Theorem \ref{general}  provides further information for e.g.  the closed $(n+1)$-subideals obtained in  Theorems \ref{nsubids2}, \ref{nsubids3} and \ref{nsubid}
 by applying Theorem \ref{general}.

\begin{theorem}\label{general+}
Let $n\in\mathbb N$. Suppose that $X$ is a Banach space for which there is a closed ideal  $\mathcal I\subset \mathcal L(X)$ and an operator $S\in \mathcal I$ 
such that 
\begin{equation}\label{crit+}
S^n\notin \overline{\mathcal I^{n+1}}.
\end{equation} 
Put $Z=X\oplus X$ and let 
\[
\mathcal M_n \in \mathcal{SI}_{n+1}(\mathcal L(Z)) \setminus \mathcal{SI}_{n}(\mathcal L(Z))
\]
be the closed $(n+1)$-subideal defined in \eqref{nsubgen}.  Then  
\begin{itemize}
   \item[(i)]\    $\overline{[\mathcal M_n]^{n+1}}\varsubsetneq \mathcal M_n$. 
    
  \item[(ii)]\   $[\mathcal M_n]^n \not\subset \mathcal M_n$. 
  \end{itemize}
\end{theorem}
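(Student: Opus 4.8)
The plan is to recombine the facts already established in the proof of Theorem~\ref{general} --- the description $\mathcal M_n = span(T,\ldots,T^n)+\overline{\mathcal J^{n+1}}$ from \eqref{nsubgen}, the matrix identity $\overline{\mathcal J^{n+1}}=\begin{bmatrix}\overline{\mathcal I^{n+1}}&\overline{\mathcal I^{n+1}}\\\overline{\mathcal I^{n+1}}&\overline{\mathcal I^{n+1}}\end{bmatrix}$, and the non-membership $\begin{bmatrix}0&0\\S^n&0\end{bmatrix}\notin\mathcal M_n$ from \eqref{1409} --- applied to the algebra ideal $[\mathcal M_n]=[\mathcal M_n]_{\mathcal L(Z)}$ of $\mathcal L(Z)$.

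For (i) I would first observe that $[\mathcal M_n]\subset\mathcal J$: since $\mathcal M_n\subset\mathcal J$ and $\mathcal J$ is an ideal of $\mathcal L(Z)$, the ideal of $\mathcal L(Z)$ generated by $\mathcal M_n$ stays inside $\mathcal J$. Hence every $(n+1)$-fold product of elements of $[\mathcal M_n]$ lies in $\mathcal J^{n+1}$, so $[\mathcal M_n]^{n+1}\subset\mathcal J^{n+1}$ and, taking norm closures, $\overline{[\mathcal M_n]^{n+1}}\subset\overline{\mathcal J^{n+1}}$. On the other hand $\overline{\mathcal J^{n+1}}\varsubsetneq\mathcal M_n$: the inclusion is the definition of $\mathcal M_n$, and it is strict because $T^n=\begin{bmatrix}S^n&0\\0&0\end{bmatrix}$ lies in $\mathcal M_n$ but not in $\overline{\mathcal J^{n+1}}$ --- otherwise, by the matrix identity above, its $(1,1)$-component $S^n$ would lie in $\overline{\mathcal I^{n+1}}$, contradicting \eqref{crit+}. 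Combining the two displays gives $\overline{[\mathcal M_n]^{n+1}}\varsubsetneq\mathcal M_n$.

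For (ii) the point is that the operator $\begin{bmatrix}0&0\\S^n&0\end{bmatrix}$, which by \eqref{1409} is not in $\mathcal M_n$, does lie in $[\mathcal M_n]^n$. The computation $\begin{bmatrix}0&0\\S^k&0\end{bmatrix}T=\begin{bmatrix}0&0\\S^{k+1}&0\end{bmatrix}$ yields, by induction on $k$,
\[
\begin{bmatrix}0&0\\S^n&0\end{bmatrix}=\begin{bmatrix}0&0\\S&0\end{bmatrix}\,\underbrace{T\cdots T}_{n-1}\ ,
\]
where $T\in\mathcal M_n\subset[\mathcal M_n]$ and $\begin{bmatrix}0&0\\S&0\end{bmatrix}=\begin{bmatrix}0&0\\I_X&0\end{bmatrix}T\in\mathcal L(Z)\,T\subset[\mathcal M_n]$. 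The right-hand side is thus a product of exactly $n$ factors from $[\mathcal M_n]$ (for $n=1$ it is just the single element $\begin{bmatrix}0&0\\S&0\end{bmatrix}\in[\mathcal M_n]=[\mathcal M_n]^1$), so $\begin{bmatrix}0&0\\S^n&0\end{bmatrix}\in[\mathcal M_n]^n\setminus\mathcal M_n$, which is (ii).

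I do not expect a genuine obstacle: the whole argument merely rearranges material from the proof of Theorem~\ref{general}. The only points that need attention are the factor count in (ii) --- $T^{n-1}$ has to be written as $n-1$ separate elements of $[\mathcal M_n]$ so that the whole product has length $n$ --- and, in (i), the monotonicity $\overline{[\mathcal M_n]^{n+1}}\subset\overline{\mathcal J^{n+1}}$ that must be in hand before one reads off the $(1,1)$-component.
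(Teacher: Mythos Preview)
Your proof is correct and follows essentially the same route as the paper's: both parts use $[\mathcal M_n]\subset\mathcal J$ to obtain $\overline{[\mathcal M_n]^{n+1}}\subset\overline{\mathcal J^{n+1}}\subset\mathcal M_n$, and both exhibit $\begin{bmatrix}0&0\\S^n&0\end{bmatrix}=\bigl(\begin{bmatrix}0&0\\I_X&0\end{bmatrix}T\bigr)T^{n-1}\in[\mathcal M_n]^n\setminus\mathcal M_n$ via \eqref{1409}. The only cosmetic difference is the strictness in (i): the paper observes that $\overline{[\mathcal M_n]^{n+1}}$ is a closed ideal of $\mathcal L(Z)$ while $\mathcal M_n$ is not, whereas you point to the explicit witness $T^n\in\mathcal M_n\setminus\overline{\mathcal J^{n+1}}$; both are valid.
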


\begin{proof}
(i) Recall from \eqref{nsubgen} and  the proof of Theorem \ref{general} that 
\[
\mathcal M_n = span(T, T^2,\ldots,T^n) + \overline{\mathcal J^{n+1}},
\]
where 
\[
 \mathcal J := \begin{bmatrix}
\mathcal I&\mathcal I\\\mathcal I&\mathcal I
\end{bmatrix} \subset \mathcal L(Z)  \textrm{ and }  
T:=\begin{bmatrix}
S&0\\0&0
\end{bmatrix}\in \mathcal J.
\]
Since $\mathcal J$ is a closed ideal of $\mathcal L(Z)$ and $\mathcal M_n \subset \mathcal J$ by definition, we get that
$[\mathcal M_n] \subset \mathcal J$ and  $[\mathcal M_n]^{n+1}\subset \mathcal J^{n+1}$.
After  passing to the closures it follows that 
\[
\overline{[\mathcal M_n]^{n+1}}\subset \overline{\mathcal J^{n+1}} \subset \mathcal M_n.
\]
Here $\overline{[\mathcal M_n]^{n+1}}\varsubsetneq \mathcal M_n$,  since $\overline{[\mathcal M_n]^{n+1}}$ is a closed ideal of $\mathcal L(Z)$, while
$\mathcal M_n$  fails to be a closed ideal of $\mathcal L(Z)$ by construction.

\smallskip

(ii) For $n = 1$ it suffices to note that  $\mathcal M_1$ is not an ideal of $\mathcal L(Z)$, so that  $\mathcal M_1 \varsubsetneq [\mathcal M_1]$. For $n \ge 2$ 
recall that 
\[
\begin{bmatrix}
0&0\\
S^n&0
\end{bmatrix} =
\left(\begin{bmatrix}
0&0\\
I_X&0
\end{bmatrix}\circ T\right)\circ T^{n-1}
\in  [\mathcal M_n]^n,
\]
since $\begin{bmatrix}
0&0\\
I_X&0
\end{bmatrix}\circ T \in  [\mathcal M_n]$. 
It was shown during  the proof of Theorem \ref{general} (see \eqref{1409})
that 
$\begin{bmatrix}
0&0\\
S^n&0
\end{bmatrix} \notin \mathcal M_n$.
Thus $[\mathcal M_n]^n \not\subset \mathcal M_n$ for $n \ge 2$. 
\end{proof}

In view of Theorem \ref{general+}.(i)  there are Banach spaces $X$ and non-trivial closed $4$-subideals $\mathcal M \in \mathcal{SI}_4(\mathcal L(X)) \setminus \mathcal{SI}_3(\mathcal L(X))$
for which  $\overline{[\mathcal M]^4} \varsubsetneq  \mathcal M$ and $[\mathcal M]^3 \not\subset \mathcal M$. 
A comparison with Proposition  \ref{andru}  suggests  the following 
 
\medskip

\noindent \textbf{Problem}.  Does there exist a space $X$ and $\mathcal M \in \mathcal{SI}_3(\mathcal L(X))$ for which 
\begin{equation}\label{and}
[\mathcal M]^3 \not\subset \mathcal M \ ?
\end{equation}
More generally, is there a Banach algebra $\mathcal A$ and $\mathcal M \in \mathcal{SI}_3(\mathcal A)$ 
such that \eqref{and} holds?

\smallskip

In Section \ref{nsubs} we have studied non-trivial closed  higher subideals of $\mathcal L(X)$. 
It is a pertinent  query for which other basic classes of Banach algebras
\[
\mathcal{SI}_{n}(\mathcal A) \varsubsetneq \mathcal{SI}_{n+1}(\mathcal A)
\]
for some $n \in \mathbb N$. We did not pursue this here, but  we 
include a nilpotency criterion for a subalgebra of a  non-unital Banach algebra to be an $n$-subideal. 
Condition  \eqref{t} only applies to $\mathcal  M = \mathcal A$ if $\mathcal  A$ has a unit.

 \begin{prop}\label{lemma}
Let $n\ge 2$ and suppose that
 $\mathcal A$ is a non-unital Banach algebra. If $\mathcal  M \varsubsetneq \mathcal A$ is a subalgebra  such that
\begin{equation}\label{t}
\mathcal A^{n+1}\subset \mathcal M,
\end{equation}
then $\mathcal  M$ is an $n$-subideal of $\mathcal A$. If  $\mathcal  M$ is also closed in $\mathcal A$, then 
$\mathcal  M \in \mathcal{SI}_{n}(\mathcal A)$.
\end{prop}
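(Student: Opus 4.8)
The plan is to write down an explicit chain of relative ideals sitting between $\mathcal M$ and $\mathcal A$. Set, with the convention $\mathcal A^1 = \mathcal A$,
\[
\mathcal J_k := \mathcal A^{k+1} + \mathcal M, \qquad k = 0, 1, \ldots, n.
\]
Then $\mathcal J_0 = \mathcal A + \mathcal M = \mathcal A$ since $\mathcal M \subset \mathcal A$, and $\mathcal J_n = \mathcal A^{n+1} + \mathcal M = \mathcal M$ by the hypothesis $\mathcal A^{n+1} \subset \mathcal M$. First I would record the elementary inclusions that drive everything: $\mathcal A^{i+1} \subset \mathcal A^i$ and $\mathcal A^i \mathcal A^j \subset \mathcal A^{i+j}$ for $i,j \ge 1$, and, using $\mathcal M \subset \mathcal A$ together with the fact that $\mathcal M$ is a subalgebra, $\mathcal A^i \mathcal M \subset \mathcal A^{i+1}$, $\mathcal M \mathcal A^i \subset \mathcal A^{i+1}$ and $\mathcal M \mathcal M \subset \mathcal M$.

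With these in hand the three things to check are: (a) each $\mathcal J_k$ is a subalgebra, which follows by expanding $(\mathcal A^{k+1} + \mathcal M)^2$ into its four terms $\mathcal A^{2k+2}$, $\mathcal A^{k+1}\mathcal M$, $\mathcal M\mathcal A^{k+1}$, $\mathcal M^2$, each of which lies in $\mathcal A^{k+1} + \mathcal M$; (b) $\mathcal J_k \subset \mathcal J_{k-1}$, immediate from $\mathcal A^{k+1} \subset \mathcal A^k$; and (c) $\mathcal J_k$ is an ideal of $\mathcal J_{k-1}$ for $k = 1, \ldots, n$. For (c) I would expand $\mathcal J_{k-1}\mathcal J_k = (\mathcal A^k + \mathcal M)(\mathcal A^{k+1} + \mathcal M)$ and observe that $\mathcal A^k\mathcal A^{k+1} = \mathcal A^{2k+1} \subset \mathcal A^{k+1}$, $\mathcal A^k\mathcal M \subset \mathcal A^{k+1}$, $\mathcal M\mathcal A^{k+1} \subset \mathcal A^{k+2} \subset \mathcal A^{k+1}$ and $\mathcal M^2 \subset \mathcal M$, so that $\mathcal J_{k-1}\mathcal J_k \subset \mathcal A^{k+1} + \mathcal M = \mathcal J_k$; the mirror-image computation gives $\mathcal J_k\mathcal J_{k-1} \subset \mathcal J_k$. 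This establishes that $\mathcal M = \mathcal J_n$ is an $n$-subideal of $\mathcal A$.

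For the closed statement, once the algebraic chain above is in place, the closedness of $\mathcal M = \mathcal J_n$ immediately gives $\mathcal M \in \mathcal{SI}_n(\mathcal A)$ by the remark following \eqref{2508} that it suffices for the bottom term of the chain to be closed; equivalently one may pass to the closures $\overline{\mathcal J_k}$, which still form a chain of closed relative ideals by continuity of multiplication, with $\overline{\mathcal J_n} = \overline{\mathcal M} = \mathcal M$. There is no real obstacle beyond keeping the index arithmetic straight; I note only that non-unitality of $\mathcal A$ is what makes the hypothesis $\mathcal A^{n+1} \subset \mathcal M$ compatible with $\mathcal M \varsubsetneq \mathcal A$ (in a unital algebra $\mathcal A^{n+1} = \mathcal A$), but it is not otherwise used in the argument.
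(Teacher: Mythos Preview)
Your argument is correct and in fact cleaner than the paper's. Both proofs build a length-$n$ chain of relative ideals from $\mathcal M$ up to $\mathcal A$, but they choose different intermediate links. The paper defines $\mathcal I_0 = \mathcal A$ and then iteratively sets $\mathcal I_k = [\mathcal M]_{\mathcal I_{k-1}}$, the ideal of $\mathcal I_{k-1}$ generated by $\mathcal M$; it must then prove an auxiliary claim by induction, namely $\mathcal I_k \subset \mathcal A^{k+1} + \mathcal M$, before it can verify that $\mathcal M$ is an ideal of $\mathcal I_{n-1}$. You bypass this by taking $\mathcal J_k = \mathcal A^{k+1} + \mathcal M$ directly as the intermediate subalgebras, so the relative-ideal property becomes a one-line bookkeeping check with the inclusions $\mathcal A^i\mathcal A^j \subset \mathcal A^{i+j}$ and $\mathcal M \subset \mathcal A$. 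In effect the paper's inductive claim is precisely the statement that its canonical chain sits inside yours; your chain is possibly larger term-by-term but far more transparent, and nothing is lost for the purposes of the proposition. One cosmetic point: in your expansion you write $\mathcal A^k\mathcal A^{k+1} = \mathcal A^{2k+1}$ where only $\subset$ is warranted (and is all you use). The treatment of the closed case via the remark after \eqref{2508}, or equivalently by passing to closures, matches the paper.
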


\begin{proof}
Let $\mathcal I_0:= \mathcal A$ and define successively 
\[
\mathcal I_k=[\mathcal M\cap \mathcal I_{k-1}]_{\mathcal I_{k-1}} \quad \textrm{ for } k=1,\ldots,n-1. 
\]
One verifies  by induction that $\mathcal  M\subset \mathcal I_k$ for $k=0,\ldots, n-1$, so that actually $\mathcal I_k=[\mathcal M]_{\mathcal I_{k-1}}$ for $k=1,\ldots, n-1$. Hence,
\[
\mathcal  M \subset \mathcal I_{n-1}\subset\cdots\subset \mathcal I_2\subset \mathcal I_1\subset \mathcal I_0= \mathcal A.
\]
Here $\mathcal I_{k}$ is an ideal of $\mathcal I_{k-1}$  by definition for $k=1,\ldots, n-1$, so it remains to verify that $\mathcal  M$ is an ideal of $\mathcal I_{n-1}$.
Towards this, we first check  the following auxiliary result by induction.

\smallskip

\textit{Claim.} 
\begin{equation}\label{claim}
\mathcal I_{k}\subset \mathcal A^{k+1}+\mathcal  M \quad  \textrm{ for all } k=0,\ldots, n-1.
\end{equation} 

The case $k=0$  is clear since $\mathcal I_0= \mathcal  A$. The induction assumption is that \eqref{claim} holds for some $k\in\{0,\ldots, n-2\}$. 
Since 
\[
\mathcal I_{k+1}=[\mathcal  M]_{\mathcal I_k}= \mathcal I_k \mathcal M+\mathcal  M  \mathcal I_k+ \mathcal I_k \mathcal  M  \mathcal I_k+\mathcal M
\]
according to \eqref{generatedideal}, we will  verify in (i) - (iii) below that the products
\[
a_1m,\   ma_2,\  a_1ma_2  \in \mathcal A^{k+2}+\mathcal M
\]
whenever  $a_1,a_2\in \mathcal I_k$ and  $m\in \mathcal M$.

\smallskip

(i)  The induction hypothesis  $\mathcal I_{k}\subset \mathcal A^{k+1}+\mathcal  M$ yields that  
$a_1=a+b$, where $a\in \mathcal A^{k+1}$ and $b\in \mathcal M$. Since $m\in \mathcal M \subset  \mathcal A$ we get that  
$am\in \mathcal A^{k+2}$. 
Moreover, $bm\in \mathcal M$ since $\mathcal M$ is a subalgebra of $\mathcal A$. Consequently  
\begin{equation}\label{eq15}
a_1m=am+bm\in \mathcal A^{k+2}+ \mathcal M.
\end{equation}

(ii) A symmetrical argument to (i) ensures that  $ma_2\in \mathcal A^{k+2}+\mathcal M$.
\smallskip

(iii)  From  \eqref{eq15} we get that  $a_1m=c+d$, where $c\in \mathcal A^{k+2}$ and $d\in \mathcal M$.
Moreover, $a_2\in \mathcal  A^{k+1}+ \mathcal M$ by the induction hypothesis, so that 
$a_2=c_1+c_2$ with  $c_1\in \mathcal  A^{k+1}$ and $c_2\in \mathcal M$.  It follows that  $dc_1\in \mathcal  A^{k+2}$ and $dc_2\in \mathcal M$. Thus 
$da_2=dc_1+dc_2\in \mathcal  A^{k+2}+ \mathcal M$,
so that 
\[
a_1ma_2=(c+d)a_2=ca_2+da_2\in \mathcal  A^{k+2}+\mathcal M
\]
as $ca_2\in \mathcal  A^{k+3}\subset \mathcal  A^{k+2}$. Thus $\mathcal I_{k+1}\subset \mathcal A^{k+2}+\mathcal  M$ by linearity which completes the proof of the Claim \eqref{claim}.

\smallskip

To conclude, suppose that  $m\in \mathcal M$ and $a\in \mathcal  I_{n-1}$ are arbitrary. 
From \eqref{claim} with $k = n-1$  we get that  $a=b+c$, where $b\in \mathcal  A^n$ and $c\in \mathcal M$. 
It follows that $bm\in \mathcal  A^{n+1} \subset \mathcal M$ by the assumption \eqref{t}. Moreover, $cm\in \mathcal M$ as $\mathcal M$ is a subalgebra of $\mathcal  A$, 
so that $am=bm+cm\in \mathcal M$. A symmetrical argument yields that $ma\in \mathcal M$. Consequently   $\mathcal M$ is an ideal of $\mathcal  I_{n-1}$, 
so that  $\mathcal M$ is an $n$-subideal of $\mathcal A$.

Finally, suppose that the closed subalgebra  $\mathcal M$  satisfies  \eqref{t}. The first part of the argument gives  
a chain $\mathcal M=\mathcal  J_n\subset \mathcal  J_{n-1}\subset\cdots\subset \mathcal  J_1\subset \mathcal  J_0= \mathcal A$ of relative ideals.
It follows that   
\[
\mathcal M=\overline{\mathcal  J_n}\subset\overline{\mathcal  J_{n-1}}\subset\cdots\subset \overline{\mathcal  J_1}\subset \overline{\mathcal  J_0}= \mathcal  A
\] 
is a chain of closed relative ideals associated to  $\mathcal M$, 
so that $\mathcal M \in \mathcal{SI}_n(\mathcal A)$.
\end{proof}

The nilpotency criterion \eqref{t} can be used in some arguments  in Section  \ref{nsubs}.
However, we prefer to define the relevant higher subideals as explicitly as possible,  whereas  
the chain of closed relative ideals found in the proof of Proposition \ref{lemma} is rather implicit. We next apply Proposition \ref{lemma} to the closed $n$-subideals in the setting of  the  Tarbard 
spaces from  Section \ref{Tarbard}, where the fact (ii) formulated in the proof is quite unexpected.

 \begin{theorem}\label{tarbard2}
Let $Y$ be the Tarbard space $X_{2n}$ or $X_{2n+1}$, where $n\ge 2$. Then
\[
\mathcal{SI}_1(\mathcal L(Y))\subsetneq \cdots\subsetneq \mathcal{SI}_{n-1}(\mathcal L(Y))\subsetneq \mathcal{SI}_{n}(\mathcal L(Y))=\mathcal{SI}_{n+1}(\mathcal L(Y))=\cdots
\]  
\end{theorem}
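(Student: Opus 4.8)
The plan is to determine the closed $(n+1)$-subideals of $\mathcal L(Y)$ precisely enough to (A) show that each of them is already an $n$-subideal, so that $\mathcal{SI}_n(\mathcal L(Y))=\mathcal{SI}_{n+1}(\mathcal L(Y))$ and hence (by Remark~\ref{sect4}) $\mathcal{SI}_m(\mathcal L(Y))=\mathcal{SI}_n(\mathcal L(Y))$ for all $m\ge n$; and (B) exhibit one closed $n$-subideal that is not an $(n-1)$-subideal, so that $\mathcal{SI}_{n-1}(\mathcal L(Y))\subsetneq\mathcal{SI}_n(\mathcal L(Y))$, which by Remark~\ref{sect4}~(1) forces all the remaining strict inclusions $\mathcal{SI}_1(\mathcal L(Y))\subsetneq\cdots\subsetneq\mathcal{SI}_{n-1}(\mathcal L(Y))$. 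Throughout write $k\in\{2n,2n+1\}$ and recall that $X_k$ has a Schauder basis, hence the A.P. Note also the product rule $[S^l][S^m]=[S^{l+m}]$ for $l+m\le k-1$ and $[S^l][S^m]=\mathcal K(X_k)$ for $l+m\ge k$, which follows at once from the Claim in the proof of Theorem~\ref{tarbard}.

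For (A): given a closed $(n+1)$-subideal $\mathcal I$, first run through a chain $\mathcal I=\mathcal J_{n+1}\subset\cdots\subset\mathcal J_0=\mathcal L(Y)$: either $\mathcal I\in\{\{0\},\mathcal L(Y)\}$, or the first $\mathcal J_i\neq\mathcal L(Y)$ is a proper closed ideal, so by \eqref{idchain} $\mathcal K(Y)\subseteq\mathcal I\subseteq\mathcal S(Y)=[S]$ (using Proposition~\ref{propA} and the A.P. for the left inclusion). In the latter case \eqref{bded}--\eqref{sj} let one write $\mathcal I=N+\mathcal K(Y)$ with $N$ a linear subalgebra of $\mathrm{span}(S,\dots,S^{k-1})$ under the truncated product $S^iS^j=S^{i+j}$ ($=0$ if $i+j\ge k$). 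Put $l_0=\min\{l:S^l \text{ occurs with nonzero coefficient in some element of }N\}$ (with $l_0=\infty$ if $N=\{0\}$). If $N=\{0\}$ then $\mathcal I=\mathcal K(Y)$ is a closed ideal; if $l_0=1$ then $N$ contains some $v=S+(\text{higher order})$, whose powers $v,v^2,\dots,v^{k-1}$ span $\mathrm{span}(S,\dots,S^{k-1})$, so $\mathcal I=[S]$ is a closed ideal; in either case $\mathcal I\in\mathcal{SI}_1(\mathcal L(Y))$. If $l_0\ge 3$ then $\mathcal I\subseteq[S^3]$ and $[S^3]^n=[S^{3n}]=\mathcal K(Y)\subseteq\mathcal I$ (as $3n\ge k$); if moreover $\mathcal I\neq[S^3]$, then Proposition~\ref{lemma} applied to the non-unital closed subalgebra $[S^3]$ (for $n=2$ one just uses the elementary fact that $\mathcal A^2\subseteq\mathcal M$ makes $\mathcal M$ an ideal of $\mathcal A$) gives $\mathcal I\in\mathcal{SI}_{n-1}([S^3])$, hence $\mathcal I\in\mathcal{SI}_n(\mathcal L(Y))$ because $[S^3]$ is a closed ideal of $\mathcal L(Y)$. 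Finally, if $l_0=2$ then $\mathcal I\subseteq[S^2]$ and $[S^2]^n=[S^{2n}]$; for $k=2n$ this already equals $\mathcal K(Y)$, while for $k=2n+1$ one uses that $N$ contains some $v=S^2+(\text{higher order})$ with $v^n=S^{2n}=S^{k-1}$ (the higher terms vanishing since $2n=k-1$), so $S^{k-1}\in N$ and again $[S^2]^n=\mathbb R S^{k-1}+\mathcal K(Y)\subseteq\mathcal I$; Proposition~\ref{lemma} (or the elementary fact for $n=2$) then yields $\mathcal I\in\mathcal{SI}_{n-1}([S^2])\subseteq\mathcal{SI}_n(\mathcal L(Y))$. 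In every case $\mathcal I\in\mathcal{SI}_n(\mathcal L(Y))$, which proves (A).

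For (B): let $m=\lfloor(k-1)/2\rfloor$ (so $m=n-1$ if $k=2n$ and $m=n$ if $k=2n+1$) and set $\mathcal M=\mathrm{span}(S^2,S^4,\dots,S^{2m})+\mathcal K(Y)$. This is a closed subalgebra, because $S^{2i}S^{2j}=S^{2(i+j)}$ is again among the listed even powers or is zero; it has $l_0=2$, and $[S^2]^n\subseteq\mathcal M$ exactly as in (A) (for $k=2n+1$ note $S^{k-1}=S^{2m}\in\mathcal M$), while $\mathcal M\subsetneq[S^2]$, so Proposition~\ref{lemma} gives $\mathcal M\in\mathcal{SI}_n(\mathcal L(Y))$. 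On the other hand $\mathcal M\notin\mathcal{SI}_{n-1}(\mathcal L(Y))$: if it were, then Lemma~\ref{newlemma} with $s=S\in\mathcal L(Y)$ and $t=S^2\in\mathcal M$ would force $S\cdot(S^2)^{n-1}=S^{2n-1}\in\mathcal M$, which is impossible since $2n-1\le k-1$ is odd, so $S^{2n-1}$ is a nonzero element of $\mathrm{span}(S,\dots,S^{k-1})$ that is linearly independent of the even powers spanning the finite-rank part of $\mathcal M$ and disjoint from $\mathcal K(Y)$ by the uniqueness in \eqref{bded}. Hence $\mathcal{SI}_{n-1}(\mathcal L(Y))\subsetneq\mathcal{SI}_n(\mathcal L(Y))$, and Remark~\ref{sect4}~(1) (for $n\ge 3$; for $n=2$ there is nothing further) supplies the remaining strict inclusions.

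The main obstacle is the borderline case $k=2n+1$, $l_0=2$ of part (A): applying Proposition~\ref{lemma} naively to $\mathcal A=[S^2]$ only gives $\mathcal I\in\mathcal{SI}_{n+1}(\mathcal L(Y))$, since $[S^2]^{n+1}=\mathcal K(Y)$ but $[S^2]^n\neq\mathcal K(Y)$ — one level too coarse. Shaving off that level requires the structural observation that any subalgebra of $\mathrm{span}(S,\dots,S^{k-1})$ of minimal degree $2$ automatically contains $S^{k-1}$, which is precisely what makes $[S^2]^n\subseteq\mathcal I$ hold. Making the lower bound in (B) sharp (so that the chain of $\mathcal{SI}_j(\mathcal L(Y))$ truly stabilizes at $n$ and not earlier) is the other delicate point, and Lemma~\ref{newlemma} is the clean device for it.
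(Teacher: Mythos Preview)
Your proof is correct and follows essentially the same approach as the paper: Proposition~\ref{lemma} applied to $[S^2]$ (or $[S^3]$) together with the observation $S^{2n}\in\mathcal I$ for part~(A), and the same even-power subalgebra with Lemma~\ref{newlemma} for part~(B). The only substantive difference is that the paper splits part~(A) according to whether $\mathcal J_1=[S]$ (invoking Lemma~\ref{codim}) or $\mathcal J_1\subset[S^2]$, whereas you work directly with the minimal degree $l_0$ of $\mathcal I$ itself; your handling of $l_0=1$ via the triangular change of basis $v,v^2,\dots,v^{k-1}$ is a clean replacement for Lemma~\ref{codim}. One cosmetic point: your ``product rule'' $[S^l][S^m]=\mathcal K(X_k)$ for $l+m\ge k$ and the line $[S^3]^n=[S^{3n}]=\mathcal K(Y)$ are only inclusions $\subset$ (the Claim in Theorem~\ref{tarbard} gives $UV\in\mathcal K$, not the reverse), but since you only ever use $\subset$ this does not affect the argument.
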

  
\begin{proof}
According to part (1) of Remarks \ref{sect4}  it is enough to verify that
\begin{itemize}
\item[(i)] $\mathcal{SI}_{n-1}(\mathcal L(Y))\subsetneq \mathcal{SI}_n(\mathcal L(Y))$ and
\item[(ii)] $\mathcal{SI}_{n}(\mathcal L(Y))=\mathcal{SI}_{n+1}(\mathcal L(Y))$.
\end{itemize}

Let $S \in \mathcal S(Y)$ be the operator constructed by Tarbard \cite{Ta12},  for which $S^{2n-1} \notin \mathcal K(Y)$ and $S^{2n+1} = 0$.
(For $Y=X_{2n}$ we have $S^{2n}=0$, but this fact will play no role here.)

Towards (i) we define 
\[
\mathcal I= span(S^{2},S^4,\ldots,S^{2n}) + \mathcal K(Y),
\] 
which is a closed subalgebra of $[S^2]$. 

Recall from \eqref{sj} that $[S^j]=span(S^j,\ldots,S^{k-1})+\mathcal K(Y)$ for $j=1,\ldots,k-1$. Hence it follows by iteration from \eqref{prod2}  that 
\begin{equation}\label{7925}
[S^2]^n\subset  span(S^{2n}) +  \mathcal K(Y) \subset \mathcal I.
\end{equation} 
Thus $\mathcal I\in \mathcal{SI}_{n-1}([S^2])$ by Proposition \ref{lemma}, and 
$\mathcal I\in \mathcal{SI}_n(\mathcal L(Y))$.

It remains to observe that $\mathcal I\notin \mathcal{SI}_{n-1}(\mathcal L(Y))$. However, this follows directly from Lemma \ref{newlemma} since $S \cdot (S^2)^{n-1}=S^{2n-1} \notin\mathcal I$ by \eqref{bded}.
\smallskip

(ii) The inclusion $\mathcal{SI}_{n}(\mathcal L(Y)) \subset \mathcal{SI}_{n+1}(\mathcal L(Y))$ follows from the definition. Conversely, suppose that  $\mathcal J\in \mathcal{SI}_{n+1}(\mathcal L(Y))$ and let
\[
\mathcal J=\mathcal J_{n+1}\subset\cdots\subset\mathcal J_1\subset\mathcal L(Y)
\]
be an associated  chain of closed relative ideals. We first recall that $\mathcal K(Y)\subset \mathcal J$ by Proposition \ref{propA} as $Y$ has the A.P.

If $\mathcal J_1=[S]$, then $\mathcal J_2$ is a closed ideal of $\mathcal L(Y)$ by Lemma \ref{codim}, so that $\mathcal J\in \mathcal{SI}_n(\mathcal L(Y))$. 
Thus we may assume that $\mathcal J_1\neq [S]$, whence  $\mathcal J_1\subset[S^2]$ by \eqref{idchain}.

Observe next that if $\mathcal J\subset[S^3]$, then we deduce from \eqref{prod2}  that  
\[
[S^3]^n\subset \mathcal K(Y)\subset\mathcal J
\]
as $3n\ge 2n+1$. 
Thus $\mathcal J\in \mathcal{SI}_{n-1}([S^3])$ by Proposition \ref{lemma}, and  $\mathcal J\in \mathcal{SI}_n(\mathcal L(Y))$.

On the other hand, if $\mathcal J\not\subset[S^3]$ then there is an operator 
\[
T=\sum_{j=2}^{2n}a_jS^j+K_0\in \mathcal J,
\] 
where $a_2\neq 0$ and $K_0$ is compact, since $\mathcal J\subset\mathcal J_1\subset[S^2]$. By a similar calculation as in the proof of \eqref{prod2} we have
\[
T^n=a_2^n S^{2n}+K\in \mathcal J
\]
for some $K\in \mathcal K(Y)$, and thus $S^{2n}\in \mathcal J$ by linearity. It follows that
\[
[S^2]^n\subset span(S^{2n}) + \mathcal K(Y) \subset \mathcal J
\] 
where the first inclusion was observed in \eqref{7925}.

 Finally, Proposition \ref{lemma} yields that $\mathcal J\in \mathcal{SI}_{n-1}([S^2])$, so $\mathcal J\in \mathcal{SI}_n(\mathcal L(Y))$.
 This concludes the argument.
\end{proof}

\begin{remark}\label{tarbardtoo}
Let  $X_n$ be the Tarbard space for $n \ge 2$. We note that Theorem \ref{general} yields  
a closed subalgebra $\mathcal M\subset \mathcal S(Z_n)$ such that 
\[
\mathcal M \in \mathcal{SI}_{n+1}(\mathcal L(Z_n)) \setminus \mathcal{SI}_{n}(\mathcal L(Z_n)),
\]
where $Z_n := X_{n+1} \oplus X_{n+1}$ for $n \in \mathbb N$.  Towards  this one requires 
the above operator  $S \in \mathcal S(X_{n+1})$,  for which $S^{n} \notin \mathcal K(X_{n+1})$ and $S^{n+1} = 0$, as well as 
\begin{equation}\label{niltarbard}
\overline{\mathcal S(X_{n+1})^{n+1}} = \mathcal K(X_{n+1}).
\end{equation}
In fact,  $V_1 \cdots V_{n+1} \in \mathcal K(X_{n+1})$ for any $V_1,\ldots,V_{n+1} \in \mathcal S(X_{n+1})$ by an iteration of  \eqref{prod2}, 
so condition \eqref{niltarbard} is valid as $X_{n+1}$ has the A.P.

Above  $Z_n$ is not an H.I. space, whereas  in Theorem \ref{tarbard2} the spaces $X_k$ are  H.I. for all $k$.
\end{remark} 

Theorem \ref{nsubidchain} suggests the following problem.

\smallskip

\noindent \textbf{Problem}. Let  $X_n = \bigoplus_{j=1}^{n+1} \ell^{p_j}$ be the direct sum space from \eqref{elem}. 
Is there a decreasing sequence $(\mathcal M_k)$ of closed 
subalgebras of $\mathcal L(X_n)$ such that 
 \[
\mathcal M_k \in \mathcal{SI}_{k+1}(\mathcal L(X_n)) \setminus \mathcal{SI}_{k}(\mathcal L(X_n)), \quad  k \in \mathbb N?
 \]
If such a sequence $(\mathcal M_k)$ exists, then Proposition \ref{lemma} implies that  each quotient algebra $\mathcal M_k/\mathcal K(X_n)$ must be non-nilpotent, which lies outside of the setting of the $\mathcal S(X_n)$-subideals considered in Theorem \ref{nsubids2}. However, 
there are closed ideals $\mathcal I \subset \mathcal L(X_n)$ for which $\mathcal I/\mathcal K(X_n)$ is not nilpotent. (In fact, 
$\mathcal L(X_n)$ has precisely $n+1$ maximal closed ideals which are non-nilpotent modulo $\mathcal K(X_n)$, see \cite{Vo76}.)

\smallskip
  
Finally, for completeness we display a few  examples of closed subalgebras of  $\mathcal L(X)$ that fail to be
$n$-subideals for any $n \in \mathbb N$. Recall from Proposition \ref{propA} that $\mathcal A(X) \subset \mathcal M$ for any 
non-zero closed $n$-subideal $\mathcal M \subset \mathcal L(X)$. Let  $X$ be any infinite-dimensional Banach space and put
\[
\mathcal M = span(x^* \otimes x)  \subset \mathcal L(X),
\]
where $x \in X$ and $x^* \in X^*$ are normalised vectors. It follows that  the closed subalgebra $\mathcal M \notin \mathcal{SI}_{n}(\mathcal L(X))$ for any $n \in \mathbb N$.
For another large  class of such examples,  let
 \[
 \mathcal M = span(I_X) + \mathcal B,
 \]
where  $\mathcal B \subset \mathcal L(X)$ is any closed subalgebra such that the quotient space $\mathcal L(X)/\mathcal B$ is at least $2$-dimensional. 
In this case  one applies  the fact that if $\mathcal I$ is an $n$-subideal of $\mathcal L(X)$ for some $n \in \mathbb N$ and  $I_X \in \mathcal I$, then
$\mathcal I = \mathcal L(X)$ by Lemma \ref{newlemma}.
In particular, if $\mathcal L(X)/\mathcal K(X)$ is at least $2$-dimensional, then  $\mathcal M = span(I_X) + \mathcal K(X)$
is a closed Lie ideal of $\mathcal L(X)$ which is not an $n$-subideal for any $n \in \mathbb N$.

\smallskip
 
There  are subtler examples of this phenomenon, including closed subalgebras $\mathcal M$ that satisfy $\mathcal A(X) \varsubsetneq \mathcal M \varsubsetneq \mathcal K(X)$
for certain spaces $X$.

 \begin{example}\label{nonil2} 
There is a Banach space $Z$ without the A.P. and a closed subalgebra $\mathcal A(Z) \subset  \mathcal M \subset \mathcal K(Z)$,
 such that $\mathcal M \notin \mathcal{SI}_{n}(\mathcal L(Z))$ for any $n \in \mathbb N$.
 \end{example}
 
 \begin{proof} 
 Let $X$ be a Banach space without the A.P. for which there is an operator $S\in \mathcal K(X)$ such that $S^n\notin\mathcal A(X)$ for any $n\in\mathbb N$. Recall from Remark \ref{WST} that such spaces $X$ exist. 
 
 Put $Z=X\oplus X$ and $T=\begin{bmatrix}
S&0\\
0&0
\end{bmatrix} \in \mathcal K(Z)$. Let
\[
M_0: = span(\{T^n:  n\in\mathbb N\})+\mathcal A(Z).
\]
It is straightforward to verify that $\mathcal M:=\overline{M_0}$ is a closed subalgebra of $\mathcal L(Z)$, for which 
$\mathcal A(Z)\subset  \mathcal M\subset\mathcal K(Z)$. Suppose to the contrary that $\mathcal M \in \mathcal{SI}_{n}(\mathcal L(Z))$ for some $n \in \mathbb N$. Then
\[
\begin{bmatrix}
0&0\\
S^{n+1}&0
\end{bmatrix} = \begin{bmatrix}
0&0\\
S&0
\end{bmatrix}\circ T^n  \in  \mathcal M
\] 
by Lemma \ref{newlemma}.
On the other hand,  if $U = \begin{bmatrix}
U_{11} &U_{12}\\
U_{21}&U_{22}
\end{bmatrix} \in  \mathcal M$,
then by approximation the component $U_{21} \in \mathcal A(Z)$. This entails  that  
$S^{n+1}\in\mathcal A(X)$, which  contradicts the choice of  $S$.
 \end{proof}

\smallskip

\textit{Acknowledgements}. We are indebted to Niels Laustsen (Lancaster)  for a question at the conference  
\textit{Structures in Banach spaces} (Erwin Schr\"odin\-ger Institute, Vienna) in March 2025
which led to Theorem \ref{nsubidchain}, and to Tomasz Cia\'s (Pozna\'n) for a discussion which motivated Theorem \ref{newsubids}.

\bibliographystyle{amsplain}

\bibliography{bibliography}

\end{document}